\theoremstyle{plain}
   \newtheorem{theorem}{Theorem}[section]
   \newtheorem{lemma}[theorem]{Lemma}
   \newtheorem{proposition}[theorem]{Proposition}
\theoremstyle{definition}
   \newtheorem{definition}[theorem]{Definition}
   \newtheorem{remark}[theorem]{Remark}
\theoremstyle{remark}
   \newtheorem{example}[theorem]{Example}
\newcommand{\h}{H}
\newcommand{\definedas}{:=}
\newcommand{\asdefined}{=:}
\newcommand{\D}{\displaystyle}
\newcommand{\normT}[1]{\left\| {#1} \right\|_{T}}
\newcommand{\normS}[1]{\left\| {#1} \right\|_{S}}
\newcommand{\normbT}[1]{\left\| {#1} \right\|_{\partial T}}
\DeclareMathOperator{\diam}{diam}
\DeclareMathOperator{\e}{e}
\newcommand\FF{\mathcal F}
\newcommand\JJ{\mathcal J}
\newcommand\NN{\mathbb N}
\newcommand\AAA{\mathcal A}
\newcommand\MM{\mathcal M}
\newcommand\Tau{\mathcal T}
\newcommand\PP{\mathcal P}
\newcommand\TT{\mathbb T}
\newcommand\VV{\mathbb V}
\newcommand\RRR{\mathbf R}
\newcommand\CCC{\mathcal C}
\newcommand{\RR}{\mathbb R}
\begin{document}

\title{Convergence of an adaptive Ka\v canov FEM for quasi-linear problems}
%A convergent adaptive algorithm of Ka\v{c}anov type for quasi-linear problems}

\author{Eduardo M. Garau \and Pedro Morin \and Carlos Zuppa}

\maketitle

\begin{abstract}
We design an adaptive finite element method to approximate the solutions of quasi-linear elliptic problems. The algorithm is based on a Ka\v canov iteration and a mesh adaptation step is performed after each linear solve. The method is thus \emph{inexact} because we do not solve the discrete nonlinear problems exactly, but rather perform one iteration of a fixed point method (Ka\v canov), using the approximation of the previous mesh as an initial guess. The convergence of the method is proved for any \emph{reasonable} marking strategy and starting from any initial mesh. We conclude with some numerical experiments that illustrate the theory.
\end{abstract}

\begin{quote}\small
\textbf{Keywords:} nonlinear stationary conservation laws; adaptive finite element methods; convergence.
\end{quote}

%%===============================
\section{Introduction}
%%===============================

In this paper we consider quasi-linear elliptic partial differential equations over a polygonal/polyhedral domain $\Omega\subset \RR^d$ ($d=2,3$)  of the form
\begin{equation}\label{E:ley de conservacion}
\left\{
\begin{aligned}
-\nabla\cdot \big[\alpha(\,\cdot\,,|\nabla u|^2)\nabla u\big]&= f\qquad & & \text{in}\,\Omega\\
u&= 0\qquad & &\text{on}\,\partial \Omega,
\end{aligned}
\right.
\end{equation}
where $\alpha:\Omega\times \RR_+\to \RR_+$ is a function whose properties will be stated in Section~\ref{S:setting} below, and $f\in L^2(\Omega)$ is given. %\footnote{If we define $\beta(t)\definedas\frac12\int_0^{t^2}\alpha(s)~ds$, we have that $\beta''(t)=2\alpha'(t^2)t^2+\alpha(t^2)$, and one of the hypotheses become $\beta''(t)\ge cte>0$, for all $t>0$, wich is reasonable from a physical point of view.} 
These equations describe stationary conservation laws which frequently arise in problems of mathematical physics~\cite{Zeidler}. 
%In the particular case where $\alpha(x,t)=1$ for all $x\in\Omega$ and $t>0$, the problem~\eqref{E:ley de conservacion} is the classical Dirichlet's problem. Thus, from a mathematical point of view, the problem~\eqref{E:ley de conservacion} is a first nonlinear generalization of this classical problem; and from a physical point of view, this model covers more general cases.
%Equations like~\eqref{E:ley de conservacion} include several physical problems which are completely different. 
For example, in hydrodynamics and gas dynamics (subsonic and supersonic flows), electrostatics, magnetostatics, heat conduction, elasticity and plasticity (e.g., plastic torsion of bars), etc. Some of these examples are better modeled by variational inequalities (see~\cite{HJS} and the references therein), but these fall beyond the scope of this article, which attempts to set a first step towards understanding the convergence and optimality of inexact Ka\v canov-type iterations, in the context of adaptive finite element methods.

For a summary of convergence and optimality results of AFEM we refer the  reader to the survey~\cite{NSV-survey}, and the references therein. We restrict ourselves to those references strictly related to our work.

\emph{Inexact} adaptive finite element methods have been considered for Stokes problem in~\cite{BMN,KS} using Uzawa's algorithm. Briefly, a Richardson iteration is applied to the infinite-dimensional Schur complement operator and in each iteration, the elliptic problem is solved up to a decreasing tolerance. In~\cite{BMN} linear convergence is proved and in~\cite{KS} the optimality of the method in terms of degrees of freedom is proved, after adding some new refinement steps to the algorithm.

In~\cite{DK}, a contraction property is proved for an adaptive algorithm based on D\"orfler's marking strategy~\cite{Doerfler} for nonlinear problems of the type~\eqref{E:ley de conservacion}, using Orlicz norms to cope with the very mild assumptions on the nonlinear term $\alpha$.

In this work we impose stronger assumptions on $\alpha$, which guarantee the convergence of Ka\v canov's iteration~\cite{HJS}. More precisely, we assume that $\alpha(\cdot,\cdot)$ is decreasing with respect to its second variable (cf.~\eqref{alpha-decreasing}), and fulfills condition~\eqref{E:betasegunda acotada} below; which are the same assumptions stated in~\cite{HJS}, where they consider the iteration on a fixed space, either finite- or infinite-dimensional. Our focus is the convergence analysis of the adaptive method that results from performing one mesh adaptation in each iteration of the non-linear solver. This turns out to be a very realistic and efficient method, based on the sole assumption that a \emph{linear} system is exactly solved in each iteration step.

This paper is organized as follows. In Section~\ref{S:setting} we present the  class of specific nonlinear problems that we study, and some of its properties. In Section~\ref{S:adaptive loop}, we present the inexact adaptive Ka\v canov algorithm and in Section~\ref{S:convergence} we state and prove the main result of this article, namely the convergence of the discrete solutions produced by the algorithm to the exact solution of the nonlinear problem. Finally, in Section~\ref{S:numerical experiments}, we present some numerical experiments which illustrate the theory, and explore the practical boundaries of applicability of the algorithm.

\section{Setting}\label{S:setting}

Let $\Omega\subset\RR^d$ be a bounded polygonal ($d = 2$) or polyhedral ($d = 3$) domain with Lipschitz boundary. A weak formulation of~\eqref{E:ley de conservacion} consists in finding $u\in H^1_0(\Omega)$ such that
\begin{equation}\label{E:cont_prob_nolineal}
a(u;u,v)=L(v),\qquad\forall\,v\in H^1_0(\Omega),
\end{equation}
where
\begin{equation*}%\label{E:forma_a}
a(w;u,v)=\int_\Omega \alpha(\,\cdot\,,|\nabla w|^2)\nabla u\cdot \nabla v,\qquad\forall\,w,u,v\in H^1_0(\Omega),
\end{equation*}
and
\begin{equation*}%\label{E:Lf_fuente}
L(v)=\int_\Omega fv,\qquad\forall\,v\in H^1_0(\Omega).
\end{equation*}
%para alguna $f\in L^2(\Omega)$ dada.
We require that $\alpha$ is locally Lipschitz in its first variable, uniformly with respect to its second variable (cf.~\eqref{E:alpha localmente lipschitz} below). On the other hand, we assume that $\alpha$ is $\mathcal{C}^1$ as a function of its second variable and there exist positive constants $c_a$ and $C_a$ such that
\begin{equation}\label{E:betasegunda acotada} 
c_a\le \alpha(x,t^2)+2t^2D_2\alpha(x,t^2)\le C_a,\qquad\forall\,x\in\Omega,\,t>0,
\end{equation} 
where $D_2\alpha$ denotes the partial derivative of $\alpha$ with respect to its second variable.
The last condition is related to the well-posedness of problem~\eqref{E:cont_prob_nolineal} as we will show below. Also, it is possible to prove that~\eqref{E:betasegunda acotada} implies that
\begin{equation}\label{E:acotacion de alfa}
c_a\le\alpha(x,t)\le C_a,\qquad\forall\,x\in\Omega,\,t>0.
\end{equation}
These same assumptions are stated in~\cite{HJS}, where several interesting applied problems are shown to satisfy them.

%Definamos $\VV\definedas H^1_0(\Omega)$ y consideremos  $\|v\|_\VV\definedas\|\nabla v\|_{L^2(\Omega)}$, para toda $v\in\VV$. 
Additionally, it is easy to check that the form $a$ is linear and symmetric in its second and third variables. Also, from~\eqref{E:acotacion de alfa} it follows that $a$ is bounded,
\begin{equation}\label{E:a_acotada}
|a(w;u,v)|\le C_a \|\nabla u\|_\Omega\|\nabla v\|_\Omega,\qquad\forall\,w,u,v\in H^1_0(\Omega),
\end{equation}
and coercive,
\begin{equation}\label{E:a_coercitiva}
 c_a\|\nabla u\|_\Omega^2\le a(w;u,u),\qquad\forall\,w,u\in H^1_0(\Omega).
\end{equation}

If we define $A:H^1_0(\Omega)\to H^{-1}(\Omega)$ as the nonlinear operator given by
\begin{equation*}
\langle Au,v\rangle \definedas a(u;u,v),\qquad\forall\,u,v\in H^1_0(\Omega),
\end{equation*}
then problem~\eqref{E:cont_prob_nolineal} is equivalent to the equation
$$Au=L,$$
where $L \in H^{-1}(\Omega)$ is given. Assumption~\eqref{E:betasegunda acotada} implies that $A$ is Lipschitz and strongly monotone (see~\cite{Zeidler}), i.e., there exist positive constants $C_A$ and $c_A$ such that
\begin{equation}\label{E:A Lipschitz}
\|Au-Av\|_{H^{-1}(\Omega)}\le C_A \|\nabla (u -v)\|_\Omega,\qquad \forall\,u,v\in H^1_0(\Omega),
\end{equation}
and
\begin{equation}\label{E:A fuertemente monotono}
\langle Au-Av,u-v\rangle\ge c_A\|\nabla (u -v)\|_\Omega^2,\qquad \forall\,u,v\in H^1_0(\Omega).
\end{equation}
As a consequence of~\eqref{E:A Lipschitz} and~\eqref{E:A fuertemente monotono}, problem~\eqref{E:cont_prob_nolineal} has a unique solution and is stable~\cite{Zarantonello,Zeidler}. % (Zarantonello (1960)).

% El siguiente teorema da un control para el error de la sucesi\'on definida en~\eqref{E:kachanov} en t\'erminos de dos aproximaciones sucesivas.
% 
% \begin{theorem}[Estimaci\'on del error]\label{T:kachanov estimacion del error}
% Sea $u\in\VV$ la soluci\'on del Problema~\ref{E:cont_prob_nolineal}. Si $\{U_k\}_{k\in\NN_0}\subset\VV$ es la sucesi\'on definida por~\eqref{E:kachanov}, entonces
% $$\|U_k-u\|_\VV\le\frac{C_a}{c_A}\|U_k-U_{k+1}\|_\VV,\qquad\forall\,k\in\NN_0.$$
% \end{theorem}
% 
% \begin{proof}
% Sea $u\in\VV$ la soluci\'on del Problema~\ref{E:cont_prob_nolineal} y sea $\{U_k\}_{k\in\NN_0}\subset\VV$ la sucesi\'on definida por~\eqref{E:kachanov}. Puesto que el operador $A$ asociado al Problema~\ref{E:cont_prob_nolineal} es fuertemente mon\'otono, usando~\eqref{E:kachanov} y que $a$ es acotada se tiene que
% \begin{align*}
% c_A\|U_k-u\|_\VV^2&\le \langle AU_k -Au, U_k-u\rangle= a(U_k;U_k,U_k-u)-a(u;u,U_k-u)\\
% &=a(U_k;U_k,U_k-u)-a(U_k;U_{k+1},U_k-u)=a(U_k;U_k-U_{k+1},U_k-u)\\
% &\le C_a\|U_k-U_{k+1}\|_\VV\|U_k-u\|_\VV,
% \end{align*}
% de donde sigue la afirmaci\'on del teorema.
% \end{proof}
% 
% \begin{remark}[Convergencia del M\'etodo de Ka\v{c}anov]\label{R:kakanov converge}
% Para probar la convergencia de $\{U_k\}_{k\in\NN_0}$ a $u$, del Teorema~\ref{T:kachanov estimacion del error} se sigue que es suficiente mostrar que
% $$\|U_k-U_{k+1}\|_\VV\to 0,$$
% cuando $k$ tiende a infinito. \'Esto puede demostrarse como en el Lema~\ref{L:sucesivas a cero} de la secci\'on siguiente, suponiendo la Hip\'otesis~\ref{A:key property} enunciada debajo.
% \end{remark}

\section{%A posteriori error estimators and 
Adaptive algorithm}\label{S:adaptive loop}

In order to define discrete adaptive approximations to problem~\eqref{E:cont_prob_nolineal} we will consider \emph{triangulations} of the domain $\Omega$. Let $\Tau_0$ be an
initial conforming triangulation of $\Omega$, that is, a partition
of $\Omega$ into $d$-simplices such that if two elements intersect,
they do so at a full vertex/edge/face of both elements. Let
$\TT$ denote the set of all conforming triangulations of
$\Omega$ obtained from $\Tau_0$ by refinement using the bisection procedures presented by Stevenson~\cite{Stevenson-refine}. These coincide (after some re-labeling) with the so-called \textit{newest vertex} bisection procedure in two dimensions and the bisection procedure of Kossaczk\'y in three
dimensions~\cite{Alberta}.

Due to the processes of refinement used, the family $\TT$ is shape regular, i.e., 
$$\sup_{\Tau\in\TT}\,  \sup_{T \in \Tau} \frac{\diam(T)}{\rho_T} \asdefined \kappa_\TT <\infty,$$
where $\diam(T)$ is the diameter of $T$, and $\rho_T$ is the radius of the largest ball contained in it. Throughout this article, we only consider meshes $\Tau$ that belong to the family $\TT$, so the shape regularity of all of them is bounded by the uniform constant $\kappa_\TT$ which only depends on the initial triangulation $\Tau_0$~\cite{Alberta}. Also, the diameter of any element $T\in\Tau$ is equivalent to the local mesh-size $H_T\definedas|T|^{1/d}$, which in turn defines the global mesh-size $H_\Tau \definedas \D\max_{T\in\Tau} H_T$.

For the discretization we consider the Lagrange finite element spaces consisting of continuous functions vanishing on $\partial \Omega$ which are piecewise linear over a mesh $\Tau\in\TT$, i.e.,
\begin{equation}\label{E:V_Tau}
\VV_{\Tau}\definedas\{v\in H^1_0(\Omega)\mid\quad v_{|_T}\in \PP_1(T),\quad \forall~T\in\Tau\}.
\end{equation}
We are now in position to state the adaptive loop to approximate the solution $u$ of the problem~\eqref{E:cont_prob_nolineal}.

\medskip
\begin{center}
\doublebox{ % otras opciones: doublebox, ovalbox, Ovalbox
\begin{minipage}{.9\textwidth}
\textbf{Adaptive Algorithm.}
\tt
Let $\Tau_0$ be an initial conforming mesh of $\Omega$ and $u_0\in\VV_{\Tau_0}$ be an initial approximation. Let $\Tau_1=\Tau_0$ and $k=1$.
\begin{enumerate}
\vspace{0.15cm}
 \item [1.] $u_k\definedas \textsf{SOLVE}(u_{k-1},\Tau_k)$.
\vspace{0.15cm}
\item [2.] $\{\eta_k(T)\}_{T\in\Tau_k}\definedas \textsf{ESTIMATE}(u_{k-1},u_k,\Tau_k)$.
\vspace{0.15cm}
\item [3.] $\MM_k\definedas \textsf{MARK}(\{\eta_k(T)\}_{T\in\Tau_k},\Tau_k)$.
\vspace{0.15cm}
\item [4.] $\Tau_{k+1}\definedas \textsf{REFINE}(\Tau_k,\MM_k,n)$.
\vspace{0.15cm}
\item [5.] Increment $k$ and go back to step 1.
\end{enumerate}
\end{minipage}
}
\end{center}

We now explain each module of the last algorithm in detail.

\begin{description}
\item[\textbf{The module \textsf{SOLVE}.}]
 Given the conforming triangulation $\Tau_k$ of $\Omega$, and the solution $u_{k-1}$ from the previous iteration, the module \textsf{SOLVE} outputs the solution $u_k\in\VV_k\definedas \VV_{\Tau_k}$ of the \emph{linear} problem
\begin{equation}\label{E:kdisc-problem-nolineal inexacto}
a(u_{k-1};u_k,v_k)=L(v_k),\qquad \forall~v_k\in  \VV_k.
\end{equation}
\end{description}

\begin{remark}[Linear vs.\ nonlinear] 
Notice that while \textsf{SOLVE} requires the solution of a linear system, the usual discretization of~\eqref{E:cont_prob_nolineal} in $\VV_k$ consists in finding $u_k\in \VV_k$ such that
\begin{equation}\label{E:kdisc-problem-nolineal}
a(u_k;u_k,v_k)=L(v_k),\qquad \forall~v_k\in  \VV_k,
\end{equation}
which is nonlinear. We propose here to make only one step of a fixed point iterative method to solve the nonlinear problem, and proceed to the mesh adaptation, whereas the usual approach would be to approximate the discrete nonlinear problem up to a very fine accuracy (pretending to have it exactly solved) before proceeding to mesh adaptation~\cite{GMZ-optimality-nonlinear}. Each iteration of the adaptive loop is thus much cheaper in our approach. Our theory guarantees convergence of this algorithm, and the numerical experiments of Section~\ref{S:numerical experiments} show that the convergence is quasi-optimal, although the latter is not yet rigorously proved.

\end{remark}

%Note that solving~\eqref{E:kdisc-problem-nolineal} implies to solve a \emph{nonlinear} discrete system (see~\cite{GMZ-optimality-nonlinear}). Instead, we assume that $u_k$ is, in fact, the solution of a \emph{linear} discrete system. More precisely, the module \textsf{SOLVE} takes the space $\VV_k$ and the discrete solution $u_{k-1}$ computed in the last step as input arguments and computes the solution $u_k\in\VV_k$ of the \emph{linear} problem
%\begin{equation}\label{E:kdisc-problem-nolineal inexacto}
%a(u_{k-1};u_k,v_k)=L(v_k),\qquad \forall~v_k\in  \VV_k.
%\end{equation}

\begin{remark}[Ka\v{c}anov's Method]
Let us consider problem~\eqref{E:cont_prob_nolineal} (resp.\ problem~\eqref{E:kdisc-problem-nolineal} with $k\in\NN$ fixed). We denote the space $H^1_0(\Omega)$ (resp.\ $\VV_k$) by $\VV$, and the solution $u$ (resp.\  $u_k$) by $U$. Given an initial approximation $U_0\in\VV$ of the solution $U$, we consider the sequence $\{U_i\}_{i\in\NN_0}$ where $U_i\in\VV$ is the solution of the \emph{linear} problem
\begin{equation*}%\label{E:kachanov}
a(U_{i-1};U_i,v)=L(v),\qquad\,\forall\,v\in\VV,\quad i \in \NN.
\end{equation*}

This is known as Ka\v{c}anov's Method, and it follows~\cite{HJS} that the sequence $\{U_i\}_{i\in\NN_0}$ converges to the solution $U$, provided $D_2\alpha(x,t)\le 0$ for all $x\in\Omega$ and $t>0$.

Notice that our algorithm consists in performing \emph{only one} step of Ka\v{c}anov iteration in each step of the adaptive loop.
\end{remark}

\begin{description}
\item[\textbf{The module \textsf{ESTIMATE}.}] 
Given $\Tau_k$, $u_{k-1}$ and the corresponding output $u_k$ of \textsf{SOLVE}, the module \textsf{ESTIMATE} computes and outputs the local error estimators $\{\eta_k(T)\}_{T\in\Tau_k}$ given by
\begin{equation}\label{E:estimadores kakanov def}
\eta_k^2(T)\definedas \h_T^2\normT{R_k}^2 + \h_T\normbT{J_k}^2,
\end{equation}
for all $T\in\Tau_k$.
Here $R_k$ denotes the \emph{element residual} given by \begin{equation*}%\label{E:element-residual}
{R_k}_{|_{T}}\definedas -\nabla \cdot \big[\alpha(\,\cdot \,,|\nabla u_{k-1}|^2)\nabla u_k\big]-f,\qquad\forall\,T\in\Tau_k,
\end{equation*}
and $J_k$ denotes the \emph{jump residual} given by
\begin{equation*}%\label{E:jump-residual}
{J_k}_{|_{S}}\definedas\frac12\left[(\alpha(\,\cdot \,,|\nabla u_{k-1}|^2)\nabla u_k)_{|_{T}}\cdot \vec{n}+(\alpha(\,\cdot \,,|\nabla u_{k-1}|^2)\nabla u_k)_{|_{T'}} \cdot\vec{n}' \right],
\end{equation*}
for each interior side $S$, and ${J_k}_{|_{S}}\definedas 0$, if $S$ is a side lying on the boundary of $\Omega$. Here, $T$ and $T'$ denote the elements of $\Tau_k$ sharing $S$, and $\vec{n}$, $\vec{n}'$ are the outward unit normals of $T$, $T'$ on $S$, respectively.
\end{description}

The squared sum of the a posteriori error estimators is an upper bound for the \emph{residual} $\RRR(u_k) \in H^{-1}(\Omega)$ of $u_k$ which is defined as
\begin{equation*}
\langle \RRR(u_k),v\rangle\definedas a(u_{k-1};u_k,v)-L(v)=\int_\Omega \left(\alpha(\,\cdot\,,|\nabla u_{k-1}|^2)\nabla u_k\cdot \nabla v- fv\right),
\end{equation*}
for $v\in H^1_0(\Omega)$.
In fact, integrating by parts on each $T\in\Tau_k$ we have that
\begin{equation}\label{E:relacion fundamental}
\langle\RRR(u_k),v\rangle=\sum_{T\in\Tau_k} \left(\int_T R_kv +\int_{\partial T} J_kv\right),
\end{equation}
% \begin{equation*}
% \eta_k^2\definedas \sum_{T\in\Tau} \eta_k^2(T).
% \end{equation*}
and since $\langle\RRR(u_k),v_k\rangle =0$ for $v_k\in\VV_k$, using~\eqref{E:relacion fundamental} and interpolation estimates, we arrive at the following upper bound:% \emph{reliability} property:
\footnote{From now on, we will write $a \lesssim b$ to indicate that $a \le C b$ with $C>0$ a constant depending on the data of the problem and possibly on shape regularity $\kappa_\TT$ of the meshes. %Also $a \simeq b$ will indicate that $a \lesssim b$ and $b \lesssim a$.
}
\begin{equation}\label{E:local-residual-upperbound}
|\langle\RRR(u_k),v\rangle|\lesssim\sum_{T\in\Tau_k} \eta_k(T)\|\nabla v\|_{\omega_k(T)},\quad\forall\,v\in H^1_0(\Omega),
\end{equation} 
where $\omega_k(T)$ is the union of $T$ and its neighbors in $\Tau_k$. 

The next result is some kind of \emph{stability} result for the estimators, which is a property somewhat weaker than the usual \emph{efficiency}, but sufficient to guarantee convergence of our adaptive algorithm (see also~\cite{Siebert-convergence,GM-09}). In order to prove it, we assume that $\alpha$ is locally Lipschitz in its first variable, uniformly with respect to its second variable, as we mentioned before. More precisely, we assume that there exists a constant $C_\alpha>0$ such that
\begin{equation}\label{E:alpha localmente lipschitz}
|\alpha(x,t)-\alpha(y,t)|\le C_\alpha |x-y|,\qquad\forall\,x,y\in T,\,t>0,
\end{equation}
for each $T\in\Tau_0$.

%Now we prove the following

\begin{proposition}[Stability of the local error estimators]\label{P:estabilidad kakanov}
Let $\{u_k\}_{k\in\NN}$ be the sequence of discrete solutions computed with the Adaptive Algorithm. Then, the local error estimators given by~\eqref{E:estimadores kakanov def} are stable. More precisely, there holds $$\eta_k(T)\lesssim\|\nabla u_k\|_{\omega_k(T)}+\|f\|_{T},\qquad\forall\, T\in\Tau_k,$$
for all $k\in\NN$.
%para alguna $C=C(\kappa,d,\ell,\underline{a},\overline{a},C_\AAA,C_\alpha,C_A)>0$.
\end{proposition}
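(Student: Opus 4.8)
The plan is to bound the two contributions to $\eta_k^2(T)=H_T^2\normT{R_k}^2+H_T\normbT{J_k}^2$ separately, the point throughout being that both $u_k$ and $u_{k-1}$ are piecewise linear over \emph{nested} meshes: $\Tau_k$ is a refinement of $\Tau_{k-1}$, which in turn refines $\Tau_0$. Consequently, on any $T\in\Tau_k$ the gradients $\nabla u_k$ and $\nabla u_{k-1}$ are constant vectors, and moreover $T$ is contained in a single element of $\Tau_0$ — these are the two facts that make the whole argument go through.

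For the element residual I would freeze the second argument of $\alpha$: since $|\nabla u_{k-1}|^2=:c_T$ is constant on $T$, the coefficient there is the function $x\mapsto\alpha(x,c_T)$, which by \eqref{E:alpha localmente lipschitz} — applicable because $T$ lies inside one $\Tau_0$-element — is Lipschitz with constant $C_\alpha$, hence has an a.e.-bounded gradient $|\nabla_x\alpha(\cdot,c_T)|\le C_\alpha$. Since $\nabla u_k|_T$ is also constant, the restriction of $\Div\big[\alpha(\cdot,|\nabla u_{k-1}|^2)\nabla u_k\big]$ to $T$ equals $\nabla_x\alpha(\cdot,c_T)\cdot(\nabla u_k|_T)$, which is pointwise bounded by $C_\alpha|\nabla u_k|$; therefore $\normT{R_k}\le C_\alpha\normT{\nabla u_k}+\normT{f}$. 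Multiplying by $H_T$ and using $H_T\lesssim 1$ (mesh sizes are non-increasing under refinement, or simply $H_T\le|\Omega|^{1/d}$) bounds the first term by $\normT{\nabla u_k}+\normT{f}$.

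For the jump residual, on each interior side $S\subset\partial T$ shared with a neighbour $T'\in\Tau_k$ I would use \eqref{E:acotacion de alfa} to bound the coefficients by $C_a$ and the piecewise-constancy of $\nabla u_k$ to get, pointwise on $S$, $|J_k|\le\tfrac{C_a}{2}\big(|\nabla u_k|_T|+|\nabla u_k|_{T'}|\big)$ (for $S\subset\partial\Omega$ there is nothing to prove). A routine scaling argument using shape regularity ($|S|\asymp H_T^{d-1}$, $|T|\asymp H_T^d$, and $H_{T'}\asymp H_T$ for neighbours) then turns the pointwise values into $L^2$-norms and yields $\normS{J_k}\lesssim H_T^{-1/2}\big(\normT{\nabla u_k}+\|\nabla u_k\|_{T'}\big)$; squaring, multiplying by $H_T$, and summing over the boundedly many sides of $T$ gives $H_T\normbT{J_k}^2\lesssim\normkNT{\nabla u_k}^2$. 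Combining this with the element-residual bound and taking square roots yields $\eta_k(T)\lesssim\normkNT{\nabla u_k}+\normT{f}$, with all constants depending only on the data of the problem and on $\kappa_\TT$, hence uniform in $k\in\NN$.

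The main — and essentially the only — delicate step is the element residual, and it is precisely there that the hypotheses are used: one must notice that nestedness of the meshes makes $|\nabla u_{k-1}|^2$ constant on each $T\in\Tau_k$, so that \eqref{E:alpha localmente lipschitz} may be invoked with a frozen second argument on an element sitting inside a single $\Tau_0$-element; and that, although $\nabla u_k$ is piecewise constant, the divergence $\Div[\alpha(\cdot,|\nabla u_{k-1}|^2)\nabla u_k]$ need not vanish — it is controlled by a Lipschitz-continuity estimate, not by any cancellation. Everything else is standard a posteriori bookkeeping with scaling and inverse estimates.
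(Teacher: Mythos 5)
Your proof is correct and follows the same path as the paper's: bound the element residual by observing that $\Delta u_k=0$ on $T$ and that $\nabla u_{k-1}$ is constant on $T$ (by nestedness of the meshes), so that only the $x$-Lipschitz constant $C_\alpha$ of $\alpha$ enters; and bound the jump residual using $\alpha\le C_a$ together with a scaling/trace argument and shape regularity. The only cosmetic difference is that you spell out the scaled trace estimate via explicit $|S|\asymp H_T^{d-1}$, $|T|\asymp H_T^d$ scalings where the paper simply invokes a scaled trace theorem.
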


\begin{proof}
Let $\{u_k\}_{k\in\NN}$ be the sequence computed with the Adaptive Algorithm. Let $k\in\NN$ and $T\in\Tau_k$ be fixed. On the one hand, using that ${u_k}_{|T}$ is linear, and thus $\Delta u_k = 0$ inside $T$, we have that
\begin{align*}
\normT{R_k} &= \normT{-\nabla \cdot \big[\alpha(\cdot,|\nabla u_{k-1}|^2)\nabla u_k\big]-f } 
\\
&\le \normT{\nabla \big[\alpha(\cdot,|\nabla u_{k-1}|^2)\big]\cdot\nabla u_k}+\normT{f}.
\end{align*}
Since $\alpha$ is locally Lipschitz in its first variable (cf.~\eqref{E:alpha localmente lipschitz}), it follows that if $\xi\definedas\nabla u_{k-1}$ (constant over $T$),
\begin{align*}
\left|\frac{\partial}{\partial x_i}\alpha(x,|\xi|^2)\right|&=\left|\frac{\partial\alpha}{\partial x_i}(x,|\xi|^2)\right|\le C_\alpha,\qquad \forall\, x\in T,\quad 1\le i\le d,
%&\le C_\alpha+C_\AAA \underline{a}^{-1}\left|D_2\alpha(x,|\xi_{k-1}|_{\AAA(x)}^2)\right||\xi_{k-1}|_{\AAA(x)}^2\\
%&\le C_\alpha+C_\AAA \underline{a}^{-1}\frac12(\tilde{C}_A-\tilde{c}_A),
\end{align*}
and thus,
\begin{align*}
\normT{R_k} &\lesssim  \normT{\nabla u_k}+\normT{f}.
\end{align*}

On the other hand, if $S$ is a side of $\Tau_k$ shared by the elements $T,T' \in \Tau_k$,
% and $T'$ of $\Tau_k$, %, elements interior of $\Omega$, and if $T'$ is the element of $\Tau_k$ which shares~$S$, 
\begin{align*}
\normS{J_k} &\le 
\normS{ (\alpha(\,\cdot \,,|\nabla u_{k-1}|^2)\nabla u_k)_{|_{T}}}
+
\normS{(\alpha(\,\cdot \,,|\nabla u_{k-1}|^2)\nabla u_k)_{|_{T'}} }
\\
&\lesssim \normS{{\nabla u_k}_{|_{T}}}+\normS{{\nabla u_k}_{|_{T'}}}
\\
&\lesssim  \h_T^{-1/2}\|\nabla u_k\|_{T}+\h_{T'}^{-1/2}\|\nabla u_k\|_{T'}\lesssim \h_T^{-1/2}\|\nabla u_k\|_{T\cup T'},
\end{align*}
where we have used~\eqref{E:acotacion de alfa} and a scaled trace theorem. Therefore,
\begin{equation*}
\h_T^{1/2}\normbT{J_k}\lesssim\|\nabla u_k\|_{\omega_k(T)},
\end{equation*}
which completes the proof.
\end{proof}

\begin{description}
\item[\textbf{The module \textsf{MARK}.}] Based on the local error indicators, the module \textsf{MARK} selects a subset $\MM_k$ of $\Tau_k$, using any of the so-called \emph{reasonable} marking strategies, such as the \emph{maximum strategy}, the \emph{equidistribution strategy}, or \emph{D\"orfler's strategy}~\cite{Alberta}. More precisely, we only require that the set of marked elements $\MM_k$ has at least one element of $\Tau_k$ holding the largest local estimator. That is, there exists an element $T_k^{\max} \in \MM_k$ such that
\begin{equation}\label{E:estrategia razonable}
 \eta_k(T_k^{\max}) = \max_{T \in \Tau_k} \eta_k(T).
\end{equation}
This is what practitioners usually do in order to maximize the error reduction with a minimum computational effort.

\item[\textbf{The module \textsf{REFINE}.}] Finally, the module \textsf{REFINE} takes the mesh $\Tau_k$ and the subset $\MM_k\subset \Tau_k$ as inputs. By using the bisection rule described by Stevenson in~\cite{Stevenson-refine}, this module refines (bisects) each element in $\MM_k$ at least $n$ times (where $n \ge 1$ is fixed), in order to obtain a new conforming triangulation $\Tau_{k+1}$ of $\Omega$, which is a refinement of $\Tau_k$ and the output of this module. By definition, $\Tau_{k} \in \TT$ for all $k \in \NN$ and the family of meshes obtained by the Adaptive Algorithm is shape-regular. Finally, it is worth observing that the resulting spaces are nested, i.e., $\VV_{k} \subset \VV_{k+1}$; this fact will be used in the proof of Lemma~\ref{L:sucesivas a cero} below.
\end{description}

\begin{remark}[The adaptive sequence $\{u_k\}_{k\in\NN_0}$ is bounded]\label{R:uk acotada no lineal}
By the coercivity of $a$ given by~\eqref{E:a_coercitiva}, and using the definition~\eqref{E:kdisc-problem-nolineal inexacto} of $u_k$ we have that
$$\|\nabla u_k\|_\Omega^2\le\frac{1}{c_a}a(u_{k-1};u_k,u_k)=\frac{1}{c_a}L(u_k)\le \frac{\|L\|_{H^{-1}(\Omega)}}{c_a}\|\nabla u_k\|_\Omega,$$
for all $k\in\NN$, and then
\begin{equation*}
\|\nabla u_k\|_\Omega\le\frac{\|L\|_{H^{-1}(\Omega)}}{c_a}.
\end{equation*}
Therefore, $\{u_k\}_{k\in\NN_0}$ is bounded in $H^1_0(\Omega)$.
\end{remark}

\section{Convergence Analysis}\label{S:convergence}

In this section we prove the convergence of the sequence computed with the Adaptive Algorithm described in the previous section. We combine the ideas of the proof of convergence of adaptive algorithms for linear problems given in~\cite{MSV-convergence} and~\cite{Siebert-convergence} with new techniques needed to overcome the difficulties arisen by the nonlinear nature of the problem treated in this paper, adapting some ideas from~\cite{GMZ-08,GM-09}. 

As a first step to prove the convergence of the $\{u_k\}_{k\in\NN_0}$ to the exact solution $u$, we prove that $\|\nabla (u_k-u_{k+1})\|_\Omega\to 0$, as $k$ tends to infinity, for which we need the following auxiliary lemma.

From now on we assume that 
\begin{equation}\label{alpha-decreasing}
\alpha(x,t_1) \ge \alpha(x, t_2)  \text{ whenever } 0 \le t_1 \le t_2 \text{ and } x\in\Omega.
\end{equation}

\begin{lemma}\label{L:key property}
Let us assume that $\alpha(x,\cdot)$ is a monotone decreasing function for all $x \in \Omega$, i.e.,~\eqref{alpha-decreasing} holds. Then
$$\JJ(v)-\JJ(w)\le\frac12 \big(a(w;v,v)-a(w;w,w)\big),\qquad\forall\,v,w\in H^1_0(\Omega),$$
where $\JJ(v)= \int_0^1 a(sv;sv,v)~ds$.
\end{lemma}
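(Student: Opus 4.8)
The natural approach is to show that $\JJ$ is convex-like along segments and to compute $\JJ(v)-\JJ(w)$ by introducing the function $g(s) \definedas a(sv;sv,v) - a(sw;sw,w)$, or more directly by noting that $\JJ$ is (up to a factor) a potential for the operator $A$. Indeed, since $a$ is linear and symmetric in the last two arguments, one checks that for $\varphi\in H^1_0(\Omega)$ the Gateaux derivative of $\JJ$ at $\varphi$ in direction $v$ is $\langle A\varphi, v\rangle = a(\varphi;\varphi,v)$; this is the standard fact that $\JJ$ is the potential of $A$ under assumption~\eqref{alpha-decreasing}, and the representation $\JJ(v)=\int_0^1 a(sv;sv,v)\,ds$ is exactly the fundamental-theorem-of-calculus formula $\JJ(v)=\JJ(0)+\int_0^1 \frac{d}{ds}\JJ(sv)\,ds$ with $\JJ(0)=0$. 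The key algebraic identity I would establish is that, for fixed $w$, the quadratic functional $v\mapsto \tfrac12 a(w;v,v)$ dominates $\JJ$ up to a constant: precisely,
$$\JJ(v) - \tfrac12 a(w;v,v) \le \JJ(w) - \tfrac12 a(w;w,w) \qquad \forall\,v,w,$$
which is exactly the claimed inequality after rearranging.

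To prove this, I would fix $w$ and consider $\Phi(v) \definedas \JJ(v) - \tfrac12 a(w;v,v)$, and show $\Phi$ attains its maximum at $v=w$ by showing $\Phi$ is concave and that $w$ is a critical point. Concavity: the Hessian-type term is $\langle A'\!\varphi\, z, z\rangle - a(w;z,z)$; one computes from the definition of $a$ that $\langle A'\varphi\, z, z\rangle = \int_\Omega \big(\alpha(\cdot,|\nabla\varphi|^2)|\nabla z|^2 + 2D_2\alpha(\cdot,|\nabla\varphi|^2)(\nabla\varphi\cdot\nabla z)^2\big)$, and the monotonicity~\eqref{alpha-decreasing}, i.e. $D_2\alpha\le 0$, together with $(\nabla\varphi\cdot\nabla z)^2 \le |\nabla\varphi|^2|\nabla z|^2$, gives $\langle A'\varphi\, z,z\rangle \le \int_\Omega(\alpha(\cdot,|\nabla\varphi|^2) + 2|\nabla\varphi|^2 D_2\alpha(\cdot,|\nabla\varphi|^2))|\nabla z|^2$. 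By~\eqref{E:betasegunda acotada} this is $\le C_a\|\nabla z\|_\Omega^2$; but more to the point, the bound I really need is the pointwise comparison $\alpha(\cdot,|\nabla\varphi|^2)|\nabla z|^2 + 2D_2\alpha(\cdot,|\nabla\varphi|^2)(\nabla\varphi\cdot\nabla z)^2 \le \alpha(\cdot,|\nabla w|^2)|\nabla z|^2$, which is not generally true — so a pure second-derivative argument needs care.

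A cleaner route that avoids this subtlety: write $\JJ(v)-\JJ(w) = \int_0^1 \frac{d}{dt}\JJ\big(w+t(v-w)\big)\,dt = \int_0^1 a\big(\varphi_t;\varphi_t,v-w\big)\,dt$ with $\varphi_t \definedas w+t(v-w)$, while $\tfrac12 a(w;v,v)-\tfrac12 a(w;w,w) = a(w; \tfrac{v+w}{2}, v-w) = \int_0^1 a(w;\varphi_t, v-w)\,dt$ using linearity and symmetry of $a$ in its last two slots. Subtracting, the claim reduces to showing
$$\int_0^1 \big(a(\varphi_t;\varphi_t,v-w) - a(w;\varphi_t,v-w)\big)\,dt \le 0.$$
The integrand is $\int_\Omega \big(\alpha(\cdot,|\nabla\varphi_t|^2) - \alpha(\cdot,|\nabla w|^2)\big)\nabla\varphi_t\cdot\nabla(v-w)$, and since $\nabla\varphi_t - \nabla w = t\nabla(v-w)$, this equals $\tfrac1t\int_\Omega \big(\alpha(\cdot,|\nabla\varphi_t|^2)-\alpha(\cdot,|\nabla w|^2)\big)\nabla\varphi_t\cdot(\nabla\varphi_t-\nabla w)$. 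Now invoke the elementary pointwise inequality: for a scalar function $\alpha(x,\cdot)$ monotone decreasing, the vector field $p\mapsto \alpha(x,|p|^2)\,p$ satisfies $\big(\alpha(x,|p|^2)-\alpha(x,|q|^2)\big)\,p\cdot(p-q)\le \tfrac12\big(\alpha(x,|q|^2)|p|^2 - \alpha(x,|q|^2)|q|^2\big) + (\text{something} \le 0)$ — i.e., the genuinely scalar lemma $\big(\alpha(|p|^2)-\alpha(|q|^2)\big)p\cdot(p-q) \le 0$ whenever... this is the crux.

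The main obstacle, and the step I would spend the most effort on, is precisely this last pointwise inequality in $\RR^d$: for $\alpha(x,\cdot)$ decreasing, one needs $\big(\alpha(x,|p|^2) - \alpha(x,|q|^2)\big)\,p\cdot(p-q) \le \Psi(p) - \Psi(q) - \nabla\Psi(q)\cdot(p-q)\cdot(\text{nonneg})$ type control, where $\Psi(p) = \tfrac12\int_0^{|p|^2}\alpha(x,\tau)\,d\tau$ so that $\nabla_p\Psi(p) = \alpha(x,|p|^2)p$. Concretely $\JJ(v) = \int_\Omega \Psi_x(|\nabla v|^2)$ with $\Psi_x(r) = \tfrac12\int_0^r\alpha(x,\tau)\,d\tau$, and since $\alpha(x,\cdot)$ decreasing makes $\Psi_x$ concave in $r$, while $r=|p|^2$ is convex in $p$, the composition $p\mapsto\Psi_x(|p|^2)$ is generally neither — so convexity of $\JJ$ is false and what saves the inequality is the specific one-sided bound $\JJ(v)\le \JJ(w) + \langle Aw, v-w\rangle + \tfrac12 a(w;v-w,v-w)$ (a "quasi-convexity" with the $w$-weighted quadratic remainder). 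I would therefore reduce everything to the scalar claim: for $h(r)\definedas\Psi_x(r)$ concave with $h'\ge 0$, and any $p,q\in\RR^d$, $h(|p|^2) \le h(|q|^2) + h'(|q|^2)\,(|p|^2 - |q|^2)$ is false (concavity gives the reverse), but $h(|p|^2) \le h(|q|^2) + 2h'(|q|^2)\,q\cdot(p-q) + h'(|q|^2)|p-q|^2$ — this does hold, because concavity of $h$ gives $h(|p|^2)\le h(|q|^2) + h'(|q|^2)(|p|^2-|q|^2)$ in the wrong direction, so instead I expand $|p|^2 - |q|^2 = 2q\cdot(p-q) + |p-q|^2$ and use $h(|p|^2) \le h(|q|^2) + h'(\xi)(|p|^2 - |q|^2)$ for some $\xi$ between, controlled via $0\le h' \le$ bound from~\eqref{E:betasegunda acotada}... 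I would verify this carefully, as it is where the assumption~\eqref{alpha-decreasing} does its real work; everything else (the two FTC computations and the symmetry manipulations) is routine.
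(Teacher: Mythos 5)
Your reduction to the scalar representation $\JJ(v)=\int_\Omega \Psi_x(|\nabla v|^2)\,dx$ with $\Psi_x(r)=\tfrac12\int_0^r\alpha(x,\tau)\,d\tau$ is exactly what the paper does (via the change of variables $t=s^2|\nabla v|^2$ inside the definition of $\JJ$), and from that point the whole lemma is precisely the statement
\[
\Psi_x(|\nabla v|^2)-\Psi_x(|\nabla w|^2)\ \le\ \Psi_x'(|\nabla w|^2)\bigl(|\nabla v|^2-|\nabla w|^2\bigr),\qquad\text{pointwise in }x,
\]
which after integration over $\Omega$ is the claimed inequality, since $\Psi_x'(r)=\tfrac12\alpha(x,r)$. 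You arrive at this and then discard it with the remark that ``$h(|p|^2)\le h(|q|^2)+h'(|q|^2)(|p|^2-|q|^2)$ is false (concavity gives the reverse)''. This is a sign error and it is the crux of the whole matter: for a differentiable \emph{concave} function $h$, the graph lies \emph{below} every tangent line, so $h(s)\le h(r)+h'(r)(s-r)$ is precisely true; it is convexity that gives the reverse inequality. Since $\Psi_x'=\tfrac12\alpha(x,\cdot)$ is decreasing by~\eqref{alpha-decreasing}, $\Psi_x$ is concave and the tangent-line inequality above closes the proof. Your proposed alternative $h(|p|^2)\le h(|q|^2)+2h'(|q|^2)\,q\cdot(p-q)+h'(|q|^2)|p-q|^2$ is in fact the \emph{same} inequality, because $2q\cdot(p-q)+|p-q|^2=|p|^2-|q|^2$, so you end up circling back to the thing you (incorrectly) dismissed, and the subsequent appeal to a mean-value bound via~\eqref{E:betasegunda acotada} is a dead end.

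Two smaller points. First, you correctly sensed that the pointwise inequality $\bigl(\alpha(|p|^2)-\alpha(|q|^2)\bigr)\,p\cdot(p-q)\le 0$ is \emph{not} generally true (e.g.\ $p=(1,0)$, $q=(0,2)$ give a positive product for decreasing $\alpha$); the nonpositivity in your ``cleaner route'' only appears after integrating in $t$: with $g(t)=|\nabla w+t\nabla(v-w)|^2$ one has $\int_0^1\bigl(\alpha(g(t))-\alpha(g(0))\bigr)\tfrac12 g'(t)\,dt=\tfrac12\int_{g(0)}^{g(1)}\alpha(u)\,du-\tfrac12\alpha(g(0))(g(1)-g(0))$, which is nonpositive by decreasingness of $\alpha$ — but observe this is just the concavity inequality for $\Psi_x$ again, applied at $r=g(0)$, $s=g(1)$. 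Second, your Hessian/concavity-of-$\Phi$ route was rightly abandoned: $\Phi(v)=\JJ(v)-\tfrac12 a(w;v,v)$ is not concave, so a second-derivative argument cannot work; what you actually need (and what holds) is the weaker one-sided tangent estimate for $\Psi_x$, not concavity of the full functional.

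In short: your route is the paper's route, and the missing step is the elementary fact that a concave function lies below its tangents. Fixing that single misassertion completes the proof.
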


\begin{proof}
Let $v,w\in H^1_0(\Omega)$. Then, a change of variables in the integral defining $\JJ(\cdot)$ yields %Usando~\eqref{E:J en terminos de beta}, la definici\'on de $\gamma$ dada en~\eqref{E:gamma} y la definici\'on de $\beta$ dada en~\eqref{E:beta} tenemos que
\begin{align*}
\JJ(v)-\JJ(w)&=\frac12\int_\Omega \left(\int_0^{|\nabla v|^2} \alpha(x,t)~dt-\int_0^{|\nabla w|^2}\alpha(x,t)~dt\right)~dx
\\
&=\frac12\int_\Omega \int_{|\nabla w|^2}^{|\nabla v|^2} \alpha(x,t)~dt~dx.
\end{align*}
Since $\alpha(x,\cdot)$ is a decreasing function for all $x \in \Omega$,
\begin{align*}
\frac12\int_\Omega \int_{|\nabla w|^2}^{|\nabla v|^2} \alpha(x,t)~dt~dx
&\le\frac12\int_\Omega \alpha(x,|\nabla w|^2)\left(|\nabla v|^2-|\nabla w|^2\right)~dx
\\
&=\frac12\big(a(w;v,v)-a(w;w,w)\big),
\end{align*}
and the assertion of the lemma follows. %implies that $\int_{s_1}^{s_2}\alpha(\cdot,t)~dt\le \alpha(\cdot,s_1)(s_2-s_1)$, for $s_1,s_2\ge 0$.
\end{proof}

%\note{$\alpha$ monotona decreciente. ?`comentario?}
%El siguiente lema es fundamental para probar la convergencia del Algoritmo~\ref{Al:nolineal inexacto}.

\begin{lemma}\label{L:sucesivas a cero}
Let $\{u_k\}_{k\in\NN_0}$ denote the sequence of discrete solutions computed with the Adaptive Algorithm. Then,
$$\lim_{k\to\infty} \|\nabla (u_k-u_{k+1})\|_\Omega=0.$$
\end{lemma}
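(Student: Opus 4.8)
The plan is to use the energy functional $\JJ$ from Lemma~\ref{L:key property} together with the defining relation~\eqref{E:kdisc-problem-nolineal inexacto} to show that the sequence $\JJ(u_k)$ is monotone decreasing and bounded below, hence convergent, and that its consecutive differences control $\|\nabla(u_k-u_{k+1})\|_\Omega^2$ from above. First I would record that, by Remark~\ref{R:uk acotada no lineal}, the sequence $\{u_k\}$ is bounded in $H^1_0(\Omega)$, so $\JJ(u_k)$ is bounded (using~\eqref{E:acotacion de alfa}, $0\le\JJ(v)\le \frac{C_a}{2}\|\nabla v\|_\Omega^2$), and in particular bounded below by $0$.

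The key algebraic step: apply Lemma~\ref{L:key property} with $v = u_{k+1}$ and $w = u_k$ to obtain
\begin{equation*}
\JJ(u_{k+1}) - \JJ(u_k) \le \tfrac12\big(a(u_k;u_{k+1},u_{k+1}) - a(u_k;u_k,u_k)\big).
\end{equation*}
Now I would expand $a(u_k;u_{k+1},u_{k+1}) - a(u_k;u_k,u_k)$ using bilinearity and symmetry of $a(u_k;\cdot,\cdot)$ as
\begin{equation*}
a(u_k;u_{k+1}-u_k,u_{k+1}-u_k) + 2\,a(u_k;u_k,u_{k+1}-u_k).
\end{equation*}
Here is where the nesting $\VV_k\subset\VV_{k+1}$ enters: since $u_{k+1}-u_k\in\VV_{k+1}$ and $u_{k+1}$ solves~\eqref{E:kdisc-problem-nolineal inexacto} on $\Tau_{k+1}$ with previous iterate $u_k$, we have $a(u_k;u_{k+1},u_{k+1}-u_k) = L(u_{k+1}-u_k)$; and since $u_k$ solves~\eqref{E:kdisc-problem-nolineal inexacto} on $\Tau_k$ with previous iterate $u_{k-1}$... but the coefficient is $\alpha(\cdot,|\nabla u_k|^2)$, not $\alpha(\cdot,|\nabla u_{k-1}|^2)$, so $u_k$ is \emph{not} the Galerkin solution for the form $a(u_k;\cdot,\cdot)$. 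The clean way around this is to instead write $a(u_k;u_k,u_{k+1}-u_k) = a(u_k;u_{k+1},u_{k+1}-u_k) - a(u_k;u_{k+1}-u_k,u_{k+1}-u_k) = L(u_{k+1}-u_k) - a(u_k;u_{k+1}-u_k,u_{k+1}-u_k)$, so that the right-hand side above collapses to $-a(u_k;u_{k+1}-u_k,u_{k+1}-u_k)$. Hence
\begin{equation*}
\JJ(u_{k+1}) - \JJ(u_k) \le -\tfrac12\, a(u_k;u_{k+1}-u_k,u_{k+1}-u_k) \le -\tfrac{c_a}{2}\|\nabla(u_{k+1}-u_k)\|_\Omega^2,
\end{equation*}
using coercivity~\eqref{E:a_coercitiva}.

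This shows $\JJ(u_k)$ is non-increasing and bounded below, hence convergent, and therefore $\JJ(u_k) - \JJ(u_{k+1}) \to 0$; the displayed inequality then forces $\|\nabla(u_k - u_{k+1})\|_\Omega \to 0$, which is the claim. I expect the main obstacle to be exactly the bookkeeping noted above — one must resist the temptation to treat $u_k$ as the Galerkin solution for $a(u_k;\cdot,\cdot)$ (it solves the problem only for the lagged form $a(u_{k-1};\cdot,\cdot)$), and instead route everything through the single Galerkin orthogonality $a(u_k;u_{k+1},v_{k+1}) = L(v_{k+1})$ for $v_{k+1}\in\VV_{k+1}$, which is legitimate precisely because $\VV_k\subset\VV_{k+1}$ guarantees $u_{k+1}-u_k\in\VV_{k+1}$. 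The monotonicity direction of $\alpha$ in~\eqref{alpha-decreasing} is used only through Lemma~\ref{L:key property}, and the rest is coercivity plus the telescoping/convergence argument.
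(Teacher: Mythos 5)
There is a genuine algebraic slip in your collapse step that changes which functional turns out to be decreasing, and it matters. You correctly expand
\begin{equation*}
a(u_k;u_{k+1},u_{k+1}) - a(u_k;u_k,u_k) = a(u_k;u_{k+1}-u_k,u_{k+1}-u_k) + 2\,a(u_k;u_k,u_{k+1}-u_k),
\end{equation*}
and you correctly substitute $a(u_k;u_k,u_{k+1}-u_k) = L(u_{k+1}-u_k) - a(u_k;u_{k+1}-u_k,u_{k+1}-u_k)$ via Galerkin orthogonality in $\VV_{k+1}$. But when you plug that back in, the right-hand side becomes
\begin{equation*}
-\,a(u_k;u_{k+1}-u_k,u_{k+1}-u_k) + 2\,L(u_{k+1}-u_k),
\end{equation*}
not merely $-\,a(u_k;u_{k+1}-u_k,u_{k+1}-u_k)$: you dropped the $2L(u_{k+1}-u_k)$ term. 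The term $L(u_{k+1}-u_k)$ has no sign in general, so your conclusion that $\JJ(u_k)$ alone is non-increasing does not follow from the argument, and is in fact not what the structure of the problem gives.

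The correct quantity to track is the full energy $\FF \definedas \JJ - L$, which is what the paper uses. Keeping the dropped term and moving the $L$'s to the left, your chain yields
\begin{equation*}
\FF(u_{k+1}) - \FF(u_k) \le -\tfrac12\, a(u_k;u_{k+1}-u_k,u_{k+1}-u_k) \le -\tfrac{c_a}{2}\|\nabla(u_{k+1}-u_k)\|_\Omega^2,
\end{equation*}
so $\{\FF(u_k)\}$ is non-increasing. You then need $\FF$ (not $\JJ$) to be bounded below; the paper gets this directly from $\FF(u_k) \ge \tfrac{c_a}{2}\|\nabla u_k\|_\Omega^2 - \|L\|_{H^{-1}(\Omega)}\|\nabla u_k\|_\Omega \ge -\|L\|_{H^{-1}(\Omega)}^2/(2c_a)$, with no need for the a priori bound on $\{u_k\}$ at this stage. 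Once you make this correction your route is essentially the same as the paper's: the paper starts from coercivity applied to $a(u_k;u_k-u_{k+1},u_k-u_{k+1})$ and then invokes Lemma~\ref{L:key property}, whereas you start from Lemma~\ref{L:key property} and then invoke coercivity, but both reduce the lemma to monotone decrease of $\FF(u_k)$ plus a lower bound. Your identification of where nesting $\VV_k\subset\VV_{k+1}$ is needed, and the warning that $u_k$ is not the Galerkin solution for the form $a(u_k;\cdot,\cdot)$, are both correct and are exactly the subtle points.
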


\begin{proof}
Let $\{u_k\}_{k\in\NN_0}$ be the sequence obtained with the Adaptive Algorithm. Since $a$ is coercive (cf.~\eqref{E:a_coercitiva}), and linear and symmetric in its second and third variables, we have that
\begin{multline}\label{E:sucesivas a cero aux}
c_a\|\nabla (u_k-u_{k+1})\|_\Omega^2\le a(u_k;u_k-u_{k+1},u_k-u_{k+1}) \\
=a(u_k;u_k,u_k)-2a(u_k;u_{k+1},u_k)+a(u_k;u_{k+1},u_{k+1}).
\end{multline}
From~\eqref{E:kdisc-problem-nolineal inexacto}, since $u_k \in \VV_{k+1}$, it follows that $a(u_k;u_{k+1},u_k-u_{k+1})=L(u_k-u_{k+1})$ and thus
$$a(u_k;u_{k+1},u_k)=L(u_k-u_{k+1})+a(u_k;u_{k+1},u_{k+1}).$$
Replacing this equality in~\eqref{E:sucesivas a cero aux} and taking into account Lemma~\ref{L:key property} we obtain
\begin{equation}\label{E:sucesivas a cero aux 2}
\begin{aligned}
 c_a\|\nabla (u_k-u_{k+1})\|_\Omega^2&\le a(u_k;u_k,u_k)-2L(u_k-u_{k+1})-a(u_k;u_{k+1},u_{k+1})\\
&\le 2\JJ(u_k)-2\JJ(u_{k+1})-2L(u_k)+2L(u_{k+1})\\
&=2(\FF(u_k)-\FF(u_{k+1})),
\end{aligned}
\end{equation}
where $\FF\definedas \JJ-L$, and therefore, $\{\FF(u_k)\}_{k\in\NN_0}$ is a monotone decreasing sequence.

On the other hand, $\{\FF(u_k)\}_{k\in\NN_0}$ is bounded below since %por la Observaci\'on~\ref{R:uk acotada no lineal},
\begin{align*}
\FF(u_k)&=\int_0^1 sa(su_k;u_k,u_k)~ds-L(u_k)
\\
&\ge \frac12 c_a\|\nabla u_k\|_\Omega^2-\|L\|_{H^{-1}(\Omega)}\|\nabla u_k\|_\Omega\ge -\frac{\|L\|_{H^{-1}(\Omega)}^2}{2c_a}.
\end{align*}

Finally, from the last two assertions it follows that $\{\FF(u_k)\}_{k\in\NN_0}$ is convergent. Considering~\eqref{E:sucesivas a cero aux 2}, we conclude the proof of this lemma.
\end{proof}

We show now that the sequence obtained with the Adaptive Algorithm is convergent, and more precisely, that it converges to a function in the limiting space $\VV_\infty\definedas \overline{\cup \VV_k}^{H^1_0(\Omega)}$. Note that $\VV_\infty$ is a Hilbert space with the inner product inherited from $H^1_0(\Omega)$.

\begin{theorem}[The adaptive sequence is convergent]\label{T:convergencia a un limite}
Let $\{u_k\}_{k\in\NN_0}$ be the sequence obtained with the Adaptive Algorithm. Let $u_\infty\in\VV_\infty$ be the only solution to 
\begin{equation}\label{E:cont_prob_vinfty inexacto}
u_\infty \in \VV_\infty : \quad a(u_\infty;u_\infty,v)=L(v),\qquad \forall~v\in  \VV_\infty.
\end{equation}
Then
$$u_k \longrightarrow u_\infty \quad \text{in } H^1_0(\Omega).$$
\end{theorem}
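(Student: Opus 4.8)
The plan is to first establish that $\{u_k\}$ is a Cauchy sequence in $H^1_0(\Omega)$, hence convergent to some limit, and then identify that limit as the solution $u_\infty$ of~\eqref{E:cont_prob_vinfty inexacto}. For the first part, the natural idea is \emph{not} to try to compare consecutive iterates directly (Lemma~\ref{L:sucesivas a cero} only gives $\|\nabla(u_k - u_{k+1})\|_\Omega \to 0$, which is far from Cauchy), but rather to use the monotone convergence of the energy functional $\FF = \JJ - L$ together with a quasi-convexity/coercivity estimate relating the energy gap $\FF(u_j) - \FF(u_k)$ to $\|\nabla(u_j - u_k)\|_\Omega^2$ for \emph{arbitrary} indices $j < k$. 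The point is that for $j \le k$ one has $u_j \in \VV_j \subset \VV_k$, so the Galerkin-type orthogonality $a(u_{k-1}; u_k, u_k - u_j) = L(u_k - u_j)$ holds, and one can mimic the computation in the proof of Lemma~\ref{L:sucesivas a cero} with $u_{k+1}$ replaced by $u_k$ and $u_k$ replaced by $u_j$; combined with Lemma~\ref{L:key property} applied to $v = u_j$, $w = u_{k-1}$, this should yield something like $c_a \|\nabla(u_j - u_k)\|_\Omega^2 \lesssim \FF(u_j) - \FF(u_k) + (\text{lower order terms involving } \|\nabla(u_{k-1}-u_k)\|_\Omega)$. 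Since $\{\FF(u_k)\}$ converges (shown in Lemma~\ref{L:sucesivas a cero}) and the correction terms vanish as $k \to \infty$ by that same lemma, the Cauchy property follows.

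Once $u_k \to u_*$ in $H^1_0(\Omega)$ for some $u_* \in \VV_\infty$, the second part is to show $u_* = u_\infty$. For this I would fix an arbitrary $v \in \cup_j \VV_j$, so $v \in \VV_k$ for all large $k$, and pass to the limit in the identity $a(u_{k-1}; u_k, v) = L(v)$. The key convergence is $a(u_{k-1}; u_k, v) \to a(u_*; u_*, v)$, which follows from: (i) $\|\nabla(u_{k-1} - u_*)\|_\Omega \to 0$ and $\|\nabla(u_k - u_*)\|_\Omega \to 0$ (using Lemma~\ref{L:sucesivas a cero} to see $u_{k-1}$ has the same limit as $u_k$); (ii) the boundedness~\eqref{E:acotacion de alfa} of $\alpha$ and its continuity, giving $\alpha(\cdot, |\nabla u_{k-1}|^2)\nabla u_k \to \alpha(\cdot, |\nabla u_*|^2)\nabla u_*$ in $L^2(\Omega)$ — here one extracts a subsequence along which $\nabla u_{k-1} \to \nabla u_*$ a.e., uses continuity of $\alpha$ in its second variable and dominated convergence, and then a standard subsequence argument upgrades this to convergence of the whole sequence. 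Thus $a(u_*; u_*, v) = L(v)$ for all $v$ in the dense subspace $\cup_j \VV_j$, and by density (using~\eqref{E:a_acotada}) for all $v \in \VV_\infty$; by the unique solvability of~\eqref{E:cont_prob_vinfty inexacto} (problem~\eqref{E:cont_prob_nolineal} posed on $\VV_\infty$ instead of $H^1_0(\Omega)$, which inherits Lipschitz continuity and strong monotonicity of $A$ restricted to $\VV_\infty$), we get $u_* = u_\infty$.

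The main obstacle I expect is the Cauchy argument: one must be careful that the energy-gap estimate for non-consecutive indices genuinely closes, i.e.\ that the ``cross terms'' arising when expanding $a(u_{k-1}; u_k - u_j, u_k - u_j)$ and invoking Lemma~\ref{L:key property} are all either absorbed into $\FF(u_j) - \FF(u_k)$ or controlled by quantities tending to zero. A clean way to organize this is to note that $\FF(u_k) = \FF(u_j) - \tfrac12 \sum_{i=j}^{k-1} \text{(nonnegative terms)} + \text{(something small)}$ does not immediately telescope into a distance bound, so instead one directly estimates $c_a\|\nabla(u_k - u_j)\|_\Omega^2 \le a(u_{k-1}; u_k - u_j, u_k - u_j)$, expands, uses $a(u_{k-1}; u_k, u_k - u_j) = L(u_k - u_j)$, and bounds the remaining term $a(u_{k-1}; u_j, u_j) - a(u_{k-1}; u_k, u_k) - 2L(u_j - u_k)$ from above by $2\FF(u_j) - 2\FF(u_k)$ via Lemma~\ref{L:key property} — checking this last inequality carefully is the crux. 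If a fully direct estimate proves awkward, the fallback is the standard trick from~\cite{MSV-convergence,Siebert-convergence}: show $u_k$ converges weakly (it is bounded by Remark~\ref{R:uk acotada no lineal}), pass to the limit in the weak formulation to identify the weak limit as $u_\infty$, and then upgrade weak to strong convergence using coercivity and $\FF(u_k) \to \FF(u_\infty)$ together with lower semicontinuity of $\FF$.
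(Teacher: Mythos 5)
Your primary plan (proving the Cauchy property via an energy-gap estimate for non-consecutive indices) has a genuine gap at exactly the point you flag as ``the crux.'' Expanding as you propose, coercivity and Galerkin orthogonality give, for $j\le k$,
\begin{equation*}
c_a\|\nabla(u_k-u_j)\|_\Omega^2\le a(u_{k-1};u_j,u_j)-a(u_{k-1};u_k,u_k)-2L(u_j-u_k),
\end{equation*}
and you would like this to be $\le 2(\FF(u_j)-\FF(u_k))$. Splitting $a(u_{k-1};u_j,u_j)-a(u_{k-1};u_k,u_k)$ through the base point $u_{k-1}$, Lemma~\ref{L:key property} with $w=u_{k-1}$, $v=u_k$ does control the second half, but applied with $w=u_{k-1}$, $v=u_j$ it yields $a(u_{k-1};u_j,u_j)\ge a(u_{k-1};u_{k-1},u_{k-1})+2\JJ(u_j)-2\JJ(u_{k-1})$, i.e.\ a \emph{lower} bound on the very term you need to bound from above. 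This is no accident: the lemma works for consecutive iterates precisely because the base point of the bilinear form and the ``$w$'' of the lemma coincide with $u_{k-1}$; for a jump of more than one step that alignment is lost and the inequality points the wrong way. Your fallback (weak limit plus energy identification) also does not close easily: passing to the limit in $a(u_{k-1};u_k,v)=L(v)$ requires \emph{strong} $L^2$ convergence of $\nabla u_{k-1}$ to handle the nonlinear coefficient $\alpha(\cdot,|\nabla u_{k-1}|^2)$, which is what one is trying to prove, and showing $\FF(u_k)\to\FF(u_\infty)$ is itself nontrivial because $u_k$ is a Ka\v canov iterate, not the Galerkin minimizer of $\FF$ over $\VV_k$.

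The paper avoids all of this by not proving Cauchy-ness at all. It takes $u_\infty$ (the unique solution of~\eqref{E:cont_prob_vinfty inexacto}, which exists because $A$ is Lipschitz and strongly monotone on the Hilbert space $\VV_\infty$) as the target from the start, and estimates $c_A\|\nabla(u_k-u_\infty)\|_\Omega^2\le\langle Au_k-Au_\infty,u_k-u_\infty\rangle$ by strong monotonicity~\eqref{E:A fuertemente monotono}. Using $\langle Au_\infty,\cdot\rangle=L$ on $\VV_\infty$, inserting the projection $\PP_{k+1}u_\infty\in\VV_{k+1}$, and exploiting the Ka\v canov Galerkin orthogonality $a(u_k;u_{k+1},v)=L(v)$ for $v\in\VV_{k+1}$ to rewrite one term as $a(u_k;u_k-u_{k+1},u_k-\PP_{k+1}u_\infty)$, the bound reduces to two quantities that vanish: $\|\nabla(u_k-u_{k+1})\|_\Omega$ (by Lemma~\ref{L:sucesivas a cero}) and $\|\nabla(\PP_{k+1}u_\infty-u_\infty)\|_\Omega$ (by nestedness and density of $\cup\VV_k$ in $\VV_\infty$). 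I would recommend you adopt this strategy: comparing directly against the a priori defined limit via strong monotonicity, rather than trying to establish a Cauchy property from the energy.
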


\begin{remark} Notice that~\eqref{E:cont_prob_vinfty inexacto} always has a solution because~\eqref{E:A Lipschitz} and~\eqref{E:A fuertemente monotono} hold on $\VV_\infty$, which is itself a Hilbert space.
\end{remark}

\begin{proof}
Let $\{u_k\}_{k\in\NN_0}$ be the sequence obtained with the Adaptive Algorithm. Let $u_\infty\in\VV_\infty$ denote the solution of~\eqref{E:cont_prob_vinfty inexacto} and $\PP_{k+1}:H^1_0(\Omega)\to\VV_{k+1}$ be the orthogonal projection onto $\VV_{k+1}$. Since $A$ is strongly monotone (cf.~\eqref{E:A fuertemente monotono}), using~\eqref{E:cont_prob_vinfty inexacto} and~\eqref{E:kdisc-problem-nolineal inexacto} we have that
\begin{align*}
c_A\|\nabla (u_k-u_\infty)\|_\Omega^2&\le \langle Au_k-Au_\infty,u_k-u_\infty\rangle\\
&=\langle Au_k,u_k-u_\infty\rangle-L(u_k-u_\infty)\\
&=\langle Au_k,u_k-\PP_{k+1}u_\infty\rangle+\langle Au_k,\PP_{k+1}u_\infty-u_\infty\rangle\\
&\quad-L(u_k-\PP_{k+1} u_\infty)-L(\PP_{k+1} u_\infty-u_\infty)\\
&=a(u_k;u_k-u_{k+1},u_k-\PP_{k+1}u_\infty)+\langle Au_k,\PP_{k+1}u_\infty-u_\infty\rangle
\\
&\quad -L(\PP_{k+1} u_\infty-u_\infty) \\
&\le C_a\|\nabla (u_k-u_{k+1})\|_\Omega(\|\nabla u_k\|_\Omega+\|\nabla u_\infty\|_\Omega) \\
&\quad+(C_a\|\nabla u_k\|_\Omega+\|L\|_{H^{-1}(\Omega)})\|\nabla (\PP_{k+1}u_\infty-u_\infty)\|_\Omega,
\end{align*}
for all $k\in\NN_0$, where in the last inequality we have used~\eqref{E:a_acotada}. From Remark~\ref{R:uk acotada no lineal} it follows that $\{u_k\}_{k\in\NN_0}$ is bounded in $H^1_0(\Omega)$, and using Lemma~\ref{L:sucesivas a cero}, together with the fact that the spaces $\{\VV_k\}_{k\in\NN_0}$ are nested and $\cup_{k\in\NN_0} \VV_k$ is dense in $\VV_\infty$, we conclude that $u_k\to u_\infty$ in $H^1_0(\Omega)$. 
\end{proof}

In order to show that the limiting function $u_\infty$ is, in fact, the solution of the problem~\eqref{E:cont_prob_nolineal} and thereby conclude that the adaptive sequence converges to the solution of this problem, we establish first two auxiliary results (see Lemma~\ref{L:marcados tienden a cero} and Theorem~\ref{T:convergencia debil del residuo}). We need the following

\begin{definition}\label{D:splitting}
Given any sequence of meshes $\{\Tau_k\}_{k\in\NN_0}\subset\TT$, with $\Tau_{k+1}$ a refinement of $\Tau_k$, for each $k \in \NN_0$, we define
$$\Tau_k^+\definedas \{T\in\Tau_k \mid T\in\Tau_m,\quad \forall\,m\ge k\},\qquad\qquad\Tau_k^0\definedas \Tau_k \setminus \Tau_k^+,$$
and
$$\qquad\qquad\Omega_k^+\definedas \bigcup\limits_{T\in\Tau_k^+} \omega_k(T),\qquad\qquad\qquad\qquad\Omega_k^0\definedas \bigcup\limits_{T\in\Tau_k^0} \omega_k(T).$$
In words, $\Tau_k^+$ is the subset of the elements of $\Tau_k$ which are never refined in the adaptive process, and $\Tau_k^0$ consists of the elements which are eventually refined.
\end{definition}

%Puesto que $\Tau_{k+1}$ es siempre un refinamiento de $\Tau_k$, para casi todo $x\in\Omega$ se tiene que $\{h_{\Tau_k}(x)\}_{k\in\NN_0}$ es mon\'otona decreciente y acotada inferiormente por $0$. As\'i, $$h_\infty(x)\definedas\lim\limits_{k\rightarrow\infty} h_{\Tau_k}(x)$$ est\'a bien definida en casi todo $x\in\Omega$ y define una funci\'on en $L^\infty(\Omega)$. M\'as a\'un, la convergencia es uniforme como lo afirma el siguiente lema, cuya demostraci\'on puede encontrarse en el Lema 4.3 y el Corolario 4.1 de~\cite{MSV-convergence}. 

% \begin{lemma}\label{L:hktendstozero}
% La sucesi\'on de funciones de tama\~no de la malla $\{h_{\Tau_k}\}_{k\in\NN_0}$ converge a $h_\infty$ uniformemente, es decir,
% $$\lim_{k\rightarrow\infty} \|h_{\Tau_k}-h_\infty\|_{L^\infty(\Omega)}=0,$$
% y si $\chi_{\Omega_k^0}$ denota la funci\'on caracter\'istica de $\Omega_k^0$ entonces
It can be proved~\cite{MSV-convergence} that if $\chi_{\Omega_k^0}$ denotes the characteristic function of $\Omega_k^0$, then
\begin{equation}\label{E:hktendstozero}
 \lim_{k\rightarrow\infty} \|h_k \chi_{\Omega_k^0}\|_{L^\infty(\Omega)}=0,
\end{equation}
where $h_k\in L^\infty(\Omega)$ denotes the piecewise constant mesh-size function satisfying ${h_k}_{|_T}\definedas\h_T$, for all $T\in\Tau_k$.
%\end{lemma}

Since the error estimators are stable (cf. Proposition~\ref{P:estabilidad kakanov}), using the convergence proved in the last theorem and~\eqref{E:hktendstozero} we can establish the following

\begin{lemma}[Estimator on marked elements]\label{L:marcados tienden a cero} 
Let $\left\{\{\eta_k(T)\}_{T\in\Tau_k}\right\}_{k\in\NN_0}$ be the sequence of local error estimators computed with the Adaptive Algorithm, and let $\{\MM_k\}_{k\in\NN_0}$ be the sequence of subsets of marked elements over each mesh. Then, $$\lim_{k\to\infty} \max_{T\in\MM_k} \eta_k(T)=0.$$
\end{lemma}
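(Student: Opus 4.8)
The plan is to exploit the reasonable marking hypothesis, which guarantees that for each $k$ there is an element $T_k^{\max}\in\MM_k$ with $\eta_k(T_k^{\max})=\max_{T\in\Tau_k}\eta_k(T)$; hence $\max_{T\in\MM_k}\eta_k(T)=\eta_k(T_k^{\max})$, and it suffices to show $\eta_k(T_k^{\max})\to 0$. The crucial observation is that every marked element is refined (at least $n\ge 1$ times), so $T_k^{\max}\in\Tau_k^0$ in the sense of Definition~\ref{D:splitting}; in particular $H_{T_k^{\max}}\le \|h_k\chi_{\Omega_k^0}\|_{L^\infty(\Omega)}$, which tends to zero by~\eqref{E:hktendstozero}.

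Next I would invoke the stability estimate of Proposition~\ref{P:estabilidad kakanov}: $\eta_k(T_k^{\max})\lesssim\|\nabla u_k\|_{\omega_k(T_k^{\max})}+\|f\|_{T_k^{\max}}$. The term $\|f\|_{T_k^{\max}}$ is controlled because $f\in L^2(\Omega)$: since $|T_k^{\max}|\le C H_{T_k^{\max}}^d\to 0$, absolute continuity of the Lebesgue integral gives $\|f\|_{T_k^{\max}}\to 0$. It remains to handle $\|\nabla u_k\|_{\omega_k(T_k^{\max})}$. Here I would use the convergence $u_k\to u_\infty$ in $H^1_0(\Omega)$ established in Theorem~\ref{T:convergencia a un limite}: writing $\|\nabla u_k\|_{\omega_k(T_k^{\max})}\le\|\nabla(u_k-u_\infty)\|_\Omega+\|\nabla u_\infty\|_{\omega_k(T_k^{\max})}$, the first term goes to zero uniformly, and for the second term one observes that $|\omega_k(T_k^{\max})|$ is bounded by a fixed multiple (depending only on shape regularity) of $|T_k^{\max}|\to 0$, so again absolute continuity of $\int_\cdot|\nabla u_\infty|^2$ forces $\|\nabla u_\infty\|_{\omega_k(T_k^{\max})}\to 0$. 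Combining these three facts yields $\eta_k(T_k^{\max})\to 0$, which is the claim.

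The main obstacle is the term $\|\nabla u_\infty\|_{\omega_k(T_k^{\max})}$: one must be a little careful that it is the measure of the patch $\omega_k(T_k^{\max})$, not just of $T_k^{\max}$ itself, that shrinks. This follows from shape regularity, since the number of elements in $\omega_k(T_k^{\max})$ is bounded uniformly and each neighbor has diameter comparable to $H_{T_k^{\max}}$, so $|\omega_k(T_k^{\max})|\lesssim H_{T_k^{\max}}^d\to 0$; then the dominated-convergence/absolute-continuity argument applies to the fixed $L^1$ function $|\nabla u_\infty|^2$. Everything else is a direct chaining of Proposition~\ref{P:estabilidad kakanov}, Theorem~\ref{T:convergencia a un limite}, and~\eqref{E:hktendstozero}.
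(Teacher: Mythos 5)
Your proof is correct and follows essentially the same argument as the paper: pick the maximizing element, apply the stability bound of Proposition~\ref{P:estabilidad kakanov}, split off $\|\nabla(u_k-u_\infty)\|_\Omega$ via Theorem~\ref{T:convergencia a un limite}, and kill $\|\nabla u_\infty\|_{\omega_k(\cdot)}$ and $\|f\|_{(\cdot)}$ by shape regularity, $\MM_k\subset\Tau_k^0$, \eqref{E:hktendstozero}, and absolute continuity of the integral. The only cosmetic difference is that you invoke the reasonable-marking condition~\eqref{E:estrategia razonable} to identify the maximizer; the paper simply takes $T_k\in\MM_k$ achieving $\max_{T\in\MM_k}\eta_k(T)$ (which exists since $\MM_k$ is finite), so that hypothesis is not actually needed at this point and is saved for Theorem~\ref{T:convergencia debil del residuo}.
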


\begin{proof}
Let $\left\{\{\eta_k(T)\}_{T\in\Tau_k}\right\}_{k\in\NN_0}$ and $\{\MM_k\}_{k\in\NN_0}$ be as in the assumptions. For each $k\in\NN_0$, we select $T_k\in\MM_k$ such that $\eta_k(T_k)=\max_{T\in\MM_k} \eta_k(T)$. Using Proposition~\ref{P:estabilidad kakanov} we have that
\begin{equation}\label{E:marcados tienden a cero aux1}
\eta_k(T_k)\lesssim \|\nabla u_k\|_{\omega_k(T_k)}+\|f\|_{T_k}\lesssim \|\nabla u_k-\nabla u_\infty\|_{\Omega}+\|\nabla u_\infty\|_{\omega_k(T_k)}+\|f\|_{T_k},
\end{equation}
where $u_\infty\in\VV_\infty$ is the solution of~\eqref{E:cont_prob_vinfty inexacto}. On the one hand, the first term in the right hand side of~\eqref{E:marcados tienden a cero aux1} tends to zero due to Theorem~\ref{T:convergencia a un limite}. On the other hand, since $T_k\in\MM_k\subset \Tau_k^0$, from~\eqref{E:hktendstozero} it follows that
\begin{equation*}
|T_k|\le|\omega_k(T_k)|\lesssim \h_{T_k}^d\le \|h_k\chi_{\Omega_k^0}\|_{L^\infty(\Omega)}^d\rightarrow 0, \quad\text{as } k\to\infty,
\end{equation*}
and the last two terms in the right hand side of~\eqref{E:marcados tienden a cero aux1} also tend to zero.
\end{proof}

Using the upper bound~\eqref{E:local-residual-upperbound}, the stability of the estimators (Proposition~\ref{P:estabilidad kakanov}), the facts that $\{u_k\}_{k\in\NN_0}$ is bounded (cf. Remark~\ref{R:uk acotada no lineal}) and the marking strategy is reasonable (cf.~\eqref{E:estrategia razonable}), and Lemma~\ref{L:marcados tienden a cero}, we now prove the following important result.

\begin{theorem}[Weak convergence of the residual]\label{T:convergencia debil del residuo}
%Supongamos que se cumple la Hip\'otesis~\ref{A:aproximacion local no lineales} de aproximaci\'on local. 
If $\{u_k\}_{k\in\NN_0}$ denotes the sequence of discrete solutions computed with the Adaptive Algorithm, then
$$
\lim_{k\to\infty} \langle\RRR(u_k),v\rangle=0,
\quad\text{for all $v\in H^1_0(\Omega)$}.
$$
\end{theorem}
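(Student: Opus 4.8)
The plan is to combine the residual upper bound~\eqref{E:local-residual-upperbound}, which localizes $\langle\RRR(u_k),v\rangle$ over the elements of $\Tau_k$, with two complementary facts: that the marked elements carry the largest estimator (the reasonable marking property~\eqref{E:estrategia razonable}), and that this largest estimator tends to zero by Lemma~\ref{L:marcados tienden a cero}. The difficulty is that~\eqref{E:local-residual-upperbound} is a \emph{sum} of $\#\Tau_k$ terms, and $\#\Tau_k\to\infty$, so we cannot simply bound each summand by $\max_{T\in\MM_k}\eta_k(T)$ and declare victory. We must be more careful and split the sum according to whether an element will eventually be refined or not.

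First I would fix $\varepsilon>0$ and $v\in H^1_0(\Omega)$; by density, pick $v_\varepsilon\in\cup_k\VV_k$, say $v_\varepsilon\in\VV_{k_0}$, with $\|\nabla(v-v_\varepsilon)\|_\Omega<\varepsilon$. For $k\ge k_0$ we have $\langle\RRR(u_k),v_\varepsilon\rangle=0$ (Galerkin orthogonality, since $v_\varepsilon\in\VV_k\subset$ the test space of~\eqref{E:kdisc-problem-nolineal inexacto}), hence $\langle\RRR(u_k),v\rangle=\langle\RRR(u_k),v-v_\varepsilon\rangle$. Applying~\eqref{E:local-residual-upperbound} to $v-v_\varepsilon$ and Cauchy--Schwarz over elements,
\[
|\langle\RRR(u_k),v\rangle|\lesssim\Big(\sum_{T\in\Tau_k}\eta_k(T)^2\Big)^{1/2}\|\nabla(v-v_\varepsilon)\|_\Omega\lesssim \Big(\sum_{T\in\Tau_k}\eta_k(T)^2\Big)^{1/2}\,\varepsilon,
\]
using finite overlap of the patches $\omega_k(T)$. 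Now Proposition~\ref{P:estabilidad kakanov} gives $\sum_{T\in\Tau_k}\eta_k(T)^2\lesssim\|\nabla u_k\|_\Omega^2+\|f\|_\Omega^2$, which is bounded uniformly in $k$ by Remark~\ref{R:uk acotada no lineal}. So for $k\ge k_0$ this whole quantity is $\lesssim\varepsilon$, independently of $k$.

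That already suffices: given $\varepsilon>0$, the above produces $k_0$ with $|\langle\RRR(u_k),v\rangle|\lesssim\varepsilon$ for all $k\ge k_0$, and since $\varepsilon$ was arbitrary the limit is zero. I expect the main obstacle to be presenting this cleanly rather than any genuine technical hurdle: one must be explicit that $v_\varepsilon$ lies in some fixed $\VV_{k_0}$ and then, crucially, that it stays a legitimate test function for all later meshes because the spaces are nested $\VV_k\subset\VV_{k+1}$ (noted in the \textsf{REFINE} paragraph). An alternative, slightly longer route avoids the $(\sum\eta_k^2)^{1/2}$ bound and instead splits $\Tau_k=\Tau_k^+\cup\Tau_k^0$ as in Definition~\ref{D:splitting}: on $\Tau_k^0$ one uses $|\Omega_k^0|\to0$ from~\eqref{E:hktendstozero} together with stability to kill those terms, while on $\Tau_k^+$ one observes that eventually these elements are never marked, so~\eqref{E:estrategia razonable} forces $\eta_k(T)\le\max_{T'\in\MM_k}\eta_k(T')\to0$ by Lemma~\ref{L:marcados tienden a cero} — but this requires care because $\#\Tau_k^+$ is unbounded, so one still needs the $L^2$-type summation. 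The first route is cleaner and I would write that one.
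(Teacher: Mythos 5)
The density step that launches your ``cleaner route'' is not available: for an arbitrary $v\in H^1_0(\Omega)$ there is in general no $v_\varepsilon\in\cup_k\VV_k$ with $\|\nabla(v-v_\varepsilon)\|_\Omega<\varepsilon$, because $\cup_k\VV_k$ is dense in $\VV_\infty$, and $\VV_\infty$ may be a \emph{proper} closed subspace of $H^1_0(\Omega)$. An adaptive algorithm need not refine the whole domain; elements in $\Tau_k^+$ stay untouched forever, and over that region the discrete spaces stop growing. If the density you invoke did hold, then $\VV_\infty=H^1_0(\Omega)$, in which case~\eqref{E:cont_prob_vinfty inexacto} and~\eqref{E:cont_prob_nolineal} would have the same test space and Theorem~\ref{T:u_infty es solucion inexacto} would be trivial. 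Notice, as an internal consistency check, that your argument never actually uses Lemma~\ref{L:marcados tienden a cero} or the reasonable-marking property~\eqref{E:estrategia razonable}, even though you announce them in your plan; that is precisely the mechanism that controls the estimator on the never-refined region, and a proof that dispenses with it should raise suspicion.

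The paper's proof is essentially the ``alternative route'' you sketch and then discard. It replaces your density statement by one that is actually true --- $H^2(\Omega)\cap H^1_0(\Omega)$ is dense in $H^1_0(\Omega)$ --- and, for $v\in H^2\cap H^1_0$, tests against the Lagrange interpolant $v_k\in\VV_k$ and splits $\Tau_k=\Tau_p^+\cup(\Tau_k\setminus\Tau_p^+)$ for a fixed $p<k$. Your concern that $\#\Tau_k^+$ is unbounded is precisely what freezing $p$ resolves: $\#\Tau_p^+$ is a fixed finite number, so $\eta_k(\Tau_p^+)\le(\#\Tau_p^+)^{1/2}\max_{T\in\MM_k}\eta_k(T)\to 0$ by Lemma~\ref{L:marcados tienden a cero}, while on $\Tau_k\setminus\Tau_p^+$ the estimator is only bounded (as you correctly argue from Proposition~\ref{P:estabilidad kakanov} and Remark~\ref{R:uk acotada no lineal}) but the $H^2$ interpolation error there is controlled by $\|h_p\chi_{\Omega_p^0}\|_{L^\infty(\Omega)}|v|_{H^2(\Omega)}$, which is made $<\varepsilon$ by choosing $p$ large using~\eqref{E:hktendstozero}. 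The rest of your machinery --- Galerkin orthogonality from nestedness, Cauchy--Schwarz over patches, the uniform $\ell^2$ bound on the estimators --- is sound and reappears in the paper's proof; the missing ingredient is the correct approximation device.
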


\begin{proof}
We prove first the result for $v\in H^2(\Omega)\cap H^1_0(\Omega)$, and then extend it to $H^1_0(\Omega)$ by density. Let $p\in\NN$ and $k>p$. By Definition~\ref{D:splitting} we have that $\Tau_p^+\subset\Tau_k^+\subset\Tau_k$.  Let $v_k\in \VV_k$ be the Lagrange's interpolant of $v$. Since $\langle \RRR(u_k),v_k\rangle=0$, using~\eqref{E:local-residual-upperbound}, and Cauchy-Schwartz's inequality we have that
\begin{align*}
|\langle\RRR(u_k),v\rangle|&=|\langle\RRR(u_k),v-v_k\rangle|
\lesssim \sum_{T\in\Tau_k} \eta_k(T)\|\nabla (v-v_k)\|_{\omega_k(T)}\\
&= \sum_{T\in\Tau_p^+} \eta_k(T)\|\nabla(v-v_k)\|_{\omega_k(T)}+\sum_{T\in\Tau_k\setminus\Tau_p^+} \eta_k(T)\|\nabla(v-v_k)\|_{\omega_k(T)}\\
&\lesssim \eta_k(\Tau_p^+)\|\nabla(v-v_k)\|_{\Omega_p^+}+\eta_k(\Tau_k\setminus\Tau_p^+)\|\nabla(v-v_k)\|_{\Omega_p^0},
\end{align*}
where, for any $\Xi\subset\Tau_k$ we hereafter denote $\left(\sum_{T\in\Xi}\eta_k^2(T)\right)^{\frac12}$ by $\eta_k(\Xi)$.
Taking into account Proposition~\ref{P:estabilidad kakanov} and the boundedness of the discrete solutions (cf. Remark~\ref{R:uk acotada no lineal}) we have that $\eta_k(\Tau_k\setminus\Tau_p^+)\le\eta_k(\Tau_k)\lesssim 1$, and therefore,
\[
|\langle\RRR(u_k),v\rangle|\lesssim \left(\eta_k(\Tau_p^+)+\|h_p \chi_{\Omega_p^0}\|_{L^\infty(\Omega)}\right)|v|_{H^2(\Omega)},
\]
due to interpolation estimates.

In order to prove that $\langle\RRR(u_k),v\rangle \to 0$ as $k\to\infty$ we now let $\varepsilon>0$ be arbitrary. Due to~\eqref{E:hktendstozero}, there exists $p\in\NN$ such that
$$ \|h_p \chi_{\Omega_p^0}\|_{L^\infty(\Omega)}<\varepsilon.$$
On the other hand, since $\Tau_p^+\subset\Tau_k^+\subset\Tau_k$ and the marking strategy is reasonable (cf.~\eqref{E:estrategia razonable}), 
$$\eta_k(\Tau_p^+)\le (\# \Tau_p^+)^{1/2} \max_{T\in\Tau_p^+}\eta_k(T)\le (\# \Tau_p^+)^{1/2} \max_{T\in\MM_k}\eta_k(T).$$
Now, by Lemma~\ref{L:marcados tienden a cero}, we can select $K>p$ such that
$\eta_k(\Tau_p^+)<\varepsilon$,
for all $k>K$.

Summarizing, we have proved that
\[
\lim_{k\to\infty} \langle\RRR(u_k),v\rangle=0,\qquad \text{for all } v\in H^2(\Omega)\cap H^1_0(\Omega).
\]
Finally, since $H^{2}(\Omega)\cap H^1_0(\Omega)$ is dense in $H^1_0(\Omega)$, this limit is also zero for all $v\in H^1_0(\Omega)$.
\end{proof}

As a consequence of Theorem~\ref{T:convergencia debil del residuo} we now prove that $u_\infty$ is the solution of problem~\eqref{E:cont_prob_nolineal}.

\begin{theorem}[The limiting function is the solution]\label{T:u_infty es solucion inexacto}
If $u_\infty$ denotes the solution of~\eqref{E:cont_prob_vinfty inexacto}, then $u_\infty$ is the solution of problem~\eqref{E:cont_prob_nolineal}, i.e.,
\begin{equation*}
a(u_\infty;u_\infty,v)=L(v),\qquad\forall\,v\in H^1_0(\Omega).
\end{equation*}
\end{theorem}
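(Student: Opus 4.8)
The plan is to show that $u_\infty$ satisfies the weak formulation \eqref{E:cont_prob_nolineal} by combining the weak convergence of the residual (Theorem~\ref{T:convergencia debil del residuo}) with the strong convergence $u_k\to u_\infty$ in $H^1_0(\Omega)$ (Theorem~\ref{T:convergencia a un limite}). The key observation is that the residual $\RRR(u_k)$ is built from the \emph{inexact} form $a(u_{k-1};u_k,\cdot)$, so I must pass to the limit in both arguments simultaneously: replace $u_{k-1}$ by $u_\infty$ in the coefficient and $u_k$ by $u_\infty$ in the linear slot, while Theorem~\ref{T:convergencia debil del residuo} tells me the whole expression tends to zero.

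First I would fix $v\in H^1_0(\Omega)$ and write, for each $k$,
\begin{equation*}
a(u_\infty;u_\infty,v)-L(v) = \big(a(u_\infty;u_\infty,v)-a(u_{k-1};u_k,v)\big) + \langle\RRR(u_k),v\rangle.
\end{equation*}
The second term on the right tends to $0$ as $k\to\infty$ by Theorem~\ref{T:convergencia debil del residuo}. For the first term I would split it as
\begin{equation*}
a(u_\infty;u_\infty,v)-a(u_{k-1};u_k,v) = a(u_\infty;u_\infty-u_k,v) + \big(a(u_\infty;u_k,v)-a(u_{k-1};u_k,v)\big),
\end{equation*}
and bound each piece. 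The first piece is controlled by $C_a\|\nabla(u_\infty-u_k)\|_\Omega\|\nabla v\|_\Omega$ using boundedness \eqref{E:a_acotada}, which vanishes by Theorem~\ref{T:convergencia a un limite}. For the second piece I would write it as $\int_\Omega\big(\alpha(\cdot,|\nabla u_\infty|^2)-\alpha(\cdot,|\nabla u_{k-1}|^2)\big)\nabla u_k\cdot\nabla v$; since $\{u_k\}$ is bounded (Remark~\ref{R:uk acotada no lineal}), I would use Cauchy--Schwarz and reduce to showing $\alpha(\cdot,|\nabla u_{k-1}|^2)\to\alpha(\cdot,|\nabla u_\infty|^2)$ in an appropriate sense. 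Since $u_{k-1}\to u_\infty$ in $H^1_0(\Omega)$, a subsequence has $\nabla u_{k-1}\to\nabla u_\infty$ a.e., and by continuity of $\alpha$ in its second variable together with the uniform bound \eqref{E:acotacion de alfa}, dominated convergence gives $\alpha(\cdot,|\nabla u_{k-1}|^2)\to\alpha(\cdot,|\nabla u_\infty|^2)$ in $L^q(\Omega)$ for every $q<\infty$; a standard subsequence argument then upgrades this to convergence of the full sequence. Combining the three estimates yields $a(u_\infty;u_\infty,v)=L(v)$.

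The main obstacle is the passage to the limit in the coefficient $\alpha(\cdot,|\nabla u_{k-1}|^2)$: strong $H^1$ convergence of $u_{k-1}$ only gives a.e.\ convergence of gradients along a subsequence, so the argument must be threaded through subsequences and then closed by noting the limit $a(u_\infty;u_\infty,v)=L(v)$ is independent of the subsequence. An alternative, cleaner route would be to test \eqref{E:kdisc-problem-nolineal inexacto} with a fixed $w\in\cup_k\VV_k$ (so $w\in\VV_k$ for all large $k$), obtaining $a(u_{k-1};u_k,w)=L(w)$ exactly, pass to the limit using the same a.e./dominated-convergence argument to get $a(u_\infty;u_\infty,w)=L(w)$ on the dense subspace $\cup_k\VV_k$, and then invoke density in $H^1_0(\Omega)$ together with continuity of $v\mapsto a(u_\infty;u_\infty,v)-L(v)$; however this only recovers \eqref{E:cont_prob_vinfty inexacto}, so the residual version via Theorem~\ref{T:convergencia debil del residuo} is genuinely needed to reach all of $H^1_0(\Omega)$.
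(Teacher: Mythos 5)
Your overall skeleton matches the paper's: insert $\langle\RRR(\cdot),v\rangle$, split off $a(u_\infty;u_\infty,v)-a(\text{discrete},\text{discrete},v)$, and let Theorems~\ref{T:convergencia a un limite} and~\ref{T:convergencia debil del residuo} drive the three pieces to zero. But the middle step diverges in a meaningful way. The paper splits
\[
a(u_\infty;u_\infty,v)-a(u_k;u_{k+1},v)=\bigl(a(u_\infty;u_\infty,v)-a(u_k;u_k,v)\bigr)+a(u_k;u_k-u_{k+1},v),
\]
handles the second piece by boundedness~\eqref{E:a_acotada} and Lemma~\ref{L:sucesivas a cero}, and handles the first piece in one line via the Lipschitz property~\eqref{E:A Lipschitz} of the nonlinear operator $A$: $|a(u_\infty;u_\infty,v)-a(u_k;u_k,v)|=|\langle Au_\infty-Au_k,v\rangle|\le C_A\|\nabla(u_\infty-u_k)\|_\Omega\|\nabla v\|_\Omega$. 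No pointwise convergence of gradients, no subsequence extraction, no dominated convergence — the nonlinearity in the coefficient has already been absorbed into~\eqref{E:A Lipschitz}, which the paper takes as given (cited from Zeidler as a consequence of~\eqref{E:betasegunda acotada}). You instead split off $a(u_\infty;u_\infty-u_k,v)$ and are left with the coefficient term $\int_\Omega\bigl(\alpha(\cdot,|\nabla u_\infty|^2)-\alpha(\cdot,|\nabla u_{k-1}|^2)\bigr)\nabla u_k\cdot\nabla v$, which you attack head on via a.e.\ convergence of gradients and dominated convergence.

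Your route can be made to work, but as written it has a soft spot you should fix. Writing $g_k=\alpha(\cdot,|\nabla u_\infty|^2)-\alpha(\cdot,|\nabla u_{k-1}|^2)$, the claim that Cauchy--Schwarz reduces matters to showing $g_k\to0$ in $L^q$ for some $q<\infty$ does not close: Cauchy--Schwarz gives $\|g_k\nabla u_k\|_\Omega\|\nabla v\|_\Omega$, and since $\nabla u_k\in L^2$ only, the natural H\"older pairing against $g_k$ would need $\|g_k\|_{L^\infty}\to0$, which dominated convergence does not give. The fix is to also freeze the middle factor: write $\nabla u_k=\nabla u_\infty+\nabla(u_k-u_\infty)$, so
\[
\int_\Omega g_k\,\nabla u_k\cdot\nabla v=\int_\Omega g_k\,\nabla u_\infty\cdot\nabla v+\int_\Omega g_k\,\nabla(u_k-u_\infty)\cdot\nabla v.
\]
The second integral is bounded by $2C_a\|\nabla(u_k-u_\infty)\|_\Omega\|\nabla v\|_\Omega\to0$ by Theorem~\ref{T:convergencia a un limite} and~\eqref{E:acotacion de alfa}. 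In the first integral the majorant $2C_a|\nabla u_\infty||\nabla v|$ is now a fixed $L^1$ function and $g_k\to0$ a.e.\ along a subsequence, so dominated convergence applies; the subsequence argument you describe then upgrades the limit to the full sequence. With this repair your proof is correct, but it re-derives, in a special case, exactly the continuity of $w\mapsto a(w;\cdot,\cdot)$ that~\eqref{E:A Lipschitz} already encapsulates. The paper's choice to invoke~\eqref{E:A Lipschitz} is shorter and stays entirely quantitative; yours is more self-contained but requires the extra measure-theoretic bookkeeping. Your closing observation — that testing~\eqref{E:kdisc-problem-nolineal inexacto} against $w\in\cup_k\VV_k$ and passing to the limit only recovers~\eqref{E:cont_prob_vinfty inexacto} — is correct and a good sanity check that the residual theorem is genuinely needed.
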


\begin{proof}
Let $u_\infty$ be the solution of~\eqref{E:cont_prob_vinfty inexacto}. If $v\in H^1_0(\Omega)$, and $\{u_k\}_{k\in\NN_0}$ denotes the sequence of discrete solutions computed with the Adaptive Algorithm, then
\begin{align*}
|a(u_\infty;u_\infty,v)-L(v)|&=|a(u_\infty;u_\infty,v)-L(v)-a(u_k;u_{k+1},v)+a(u_k;u_{k+1},v)|\\
&\le |a(u_\infty;u_\infty,v)-a(u_k;u_{k+1},v)|+|\langle \RRR(u_{k+1}),v\rangle|\\
&\le|a(u_\infty;u_\infty,v)-a(u_k;u_k,v)|
\\
&\quad +|a(u_k;u_k-u_{k+1},v)|+|\langle \RRR(u_{k+1}),v\rangle|\\
&\le \|Au_\infty-Au_k\|_{H^{-1}(\Omega)}\|\nabla v\|_\Omega
\\
&\quad+C_a\|\nabla (u_k-u_{k+1})\|_\Omega\|\nabla v\|_\Omega+|\langle \RRR(u_k),v\rangle|\\
&\le  C_A\|\nabla (u_\infty-u_k)\|_\Omega\|\nabla v\|_\Omega
\\
&\quad+C_a\|\nabla (u_k-u_{k+1})\|_\Omega\|\nabla v\|_\Omega+|\langle \RRR(u_k),v\rangle|,
\end{align*}
where we have used that $A$ is Lipschitz (cf.~\eqref{E:A Lipschitz}) and $a$ is bounded (cf.~\eqref{E:a_acotada}). Using  Theorem~\ref{T:convergencia a un limite}, Lemma~\ref{L:sucesivas a cero} and Theorem~\ref{T:convergencia debil del residuo} it follows that the right-hand side in the last inequality tends to zero as $k$ tends to infinity.% Since $v\in H^1_0(\Omega)$ is arbitrary, $u_\infty$ is the solution to problem~\eqref{E:cont_prob_nolineal}.
\end{proof}

As an immediate consequence of Theorems~\ref{T:convergencia a un limite} and~\ref{T:u_infty es solucion inexacto} we finally obtain the main result of this article.

\begin{theorem}[Main result]\label{T:main}
Let $\{u_k\}_{k\in\NN_0}$ denote the sequence of discrete solutions computed with the Adaptive Algorithm. If $\alpha(\cdot,\cdot)$ satisfies assumptions~\eqref{E:betasegunda acotada} and~\eqref{alpha-decreasing}, then $\{u_k\}_{k\in\NN_0}$ converges to the solution $u$ of  problem~\eqref{E:cont_prob_nolineal}.
\end{theorem}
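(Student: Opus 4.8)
The plan is to assemble the two preceding theorems, since the statement is essentially a corollary. First I would invoke Theorem~\ref{T:convergencia a un limite}: the adaptive sequence $\{u_k\}_{k\in\NN_0}$ converges in $H^1_0(\Omega)$ to the function $u_\infty\in\VV_\infty$ defined as the unique solution of the reduced nonlinear problem~\eqref{E:cont_prob_vinfty inexacto} posed on the limiting space $\VV_\infty$. Then I would invoke Theorem~\ref{T:u_infty es solucion inexacto}, which upgrades this: the very same $u_\infty$ satisfies $a(u_\infty;u_\infty,v)=L(v)$ for \emph{all} $v\in H^1_0(\Omega)$, i.e.\ it solves the full continuous problem~\eqref{E:cont_prob_nolineal}.

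The only genuine step left is to identify $u_\infty$ with $u$. This is a uniqueness argument: assumption~\eqref{E:betasegunda acotada} implies, through~\eqref{E:A Lipschitz} and~\eqref{E:A fuertemente monotono}, that the operator $A$ is Lipschitz and strongly monotone, so problem~\eqref{E:cont_prob_nolineal} has exactly one solution in $H^1_0(\Omega)$. Since both $u$ and $u_\infty$ solve it, we get $u_\infty=u$, and therefore $u_k\to u$ in $H^1_0(\Omega)$, which is the claim. Note that assumption~\eqref{alpha-decreasing} enters only indirectly, via the auxiliary lemmas that feed Theorem~\ref{T:convergencia a un limite}.

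I do not expect any real obstacle in this final step; all the difficulty has been front-loaded into the auxiliary results. If I had to point to where the work actually lives, it is (i) Lemma~\ref{L:sucesivas a cero} — that consecutive iterates satisfy $\|\nabla(u_k-u_{k+1})\|_\Omega\to0$, which relies on the monotonicity trick of Lemma~\ref{L:key property} and on the energy functional $\FF=\JJ-L$ being monotone decreasing along the sequence and bounded below — and (ii) Theorem~\ref{T:convergencia debil del residuo} — weak convergence of the residual $\RRR(u_k)$ to zero, obtained by splitting the mesh into elements that are eventually refined and those that are not, using stability of the estimators (Proposition~\ref{P:estabilidad kakanov}), the reasonable marking hypothesis~\eqref{E:estrategia razonable}, and the decay of the mesh-size on the refined part. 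Given those, together with Lipschitz continuity and boundedness of $a$, the Main result is a one-line consequence.
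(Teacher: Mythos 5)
Your proposal is correct and matches the paper exactly: the authors also present Theorem~\ref{T:main} as an immediate consequence of Theorems~\ref{T:convergencia a un limite} and~\ref{T:u_infty es solucion inexacto}, with the identification $u_\infty=u$ following from the uniqueness guaranteed by~\eqref{E:A Lipschitz} and~\eqref{E:A fuertemente monotono}. Your remark that~\eqref{alpha-decreasing} enters only through the auxiliary lemmas is also accurate.
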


We conclude this section with a couple of remarks.

\begin{remark}
The problem given by~\eqref{E:ley de conservacion} is a particular case of the more general problem:
\begin{equation*}%\label{E:ley de conservacion general}
\left\{
\begin{aligned}
-\nabla\cdot \big[\alpha(\,\cdot\,,|\nabla u|_\AAA^2)\AAA\nabla u\big]&= f\qquad & & \text{in}\,\Omega\\
u&= 0\qquad & &\text{on}\,\partial \Omega,
\end{aligned}
\right.
\end{equation*}
where $\alpha:\Omega\times \RR_+\to \RR_+$ and $f\in L^2(\Omega)$ satisfy the properties assumed in the previous sections, and $\AAA:\Omega\to \RR^{d\times d}$ is %such that $\AAA(x)$ is 
symmetric for all $x\in\Omega$, and uniformly positive definite, i.e., there exist constants $\underline{a},\overline{a}>0$ such that
\begin{equation*}%\label{E:matriz A no lineales}
\underline{a}|\xi|^2\leq \AAA(x)\xi\cdot \xi\leq \overline{a}|\xi|^2,\qquad \forall~x\in\Omega,\,\xi\in\RR^d.
\end{equation*}
 If $\AAA$ is piecewise Lipschitz over an initial conforming mesh $\Tau_0$ of $\Omega$, i.e., there exists $C_\AAA>0$ such that
\begin{equation*}%\label{E:A Lipschitz no lineales}
\|\AAA(x)-\AAA(y)\|_2\le C_\AAA|x-y|,\qquad\forall\,x,y\in T,\quad\forall\,T\in\Tau_0,
\end{equation*}
then the convergence results previously presented also hold for this problem.
\end{remark}

\begin{remark}\label{R:polynomial degree}
We have assumed the use of \emph{linear} finite elements for the discretization (see~\eqref{E:V_Tau}). It is important to notice that the only place where we used this is in Proposition~\ref{P:estabilidad kakanov}.  The rest of the steps of the proof hold regardless of the degree of the finite element space. The use of linear finite elements is customary in nonlinear problems, because they greatly simplify the analysis. The numerical experiments of the next section show a competitive performance of the adaptive method for any tested polynomial degree (up to four).
\end{remark}

\section{Numerical Experiments}\label{S:numerical experiments}

We conclude this article reporting on the behavior of the adaptive algorithm for some particular nonlinear problems. 
In the first subsection we study the convergence rate in terms of degrees of freedom for an exact solution and different functions $\alpha(\cdot,\cdot)$. 
In the second subsection we show the performance of the algorithm when approximating an unknown solution of a prescribed curvature equation. 

\subsection{Exact solution}

Let us consider the problem
\begin{equation}\label{E:problema general}
\left\{
\begin{aligned}
-\nabla\cdot \big[\alpha(|\nabla u|^2)\nabla u\big]&= f& &\qquad \text{in}\,\Omega\\
u&= g& &\qquad\text{on}\,\partial \Omega,
\end{aligned}
\right.
\end{equation}
where $\Omega\subset\RR^2$ is the $L$-shaped domain given in Figure~\ref{f:solucion}. In the following examples, in order to study experimentally the behavior of the Adaptive Algorithm, we consider~\eqref{E:problema general} with different choices of the function $\alpha$, defining $f$ and $g$ in each case so that the solution of the problem is the function $u$ depicted in Figure~\ref{f:solucion}, given in polar coordinates by
\begin{equation}\label{E:u polar}
u(r,\varphi)=r^{\frac{2}{3}}\sin\left(\frac{2}{3}\varphi\right).
\end{equation}

\begin{figure}[h!tbp]
\begin{center}
%\psfrag{u_h}{\small $u(r,\varphi)=r^{\frac{2}{3}}\sen(\frac{2}{3}\varphi)$}
\psfrag{W}{\small$\Omega$}
\psfrag{1}{\tiny $1$}
\psfrag{-1}{\tiny $-1$}
\hfil
\includegraphics[width=.30\textwidth]{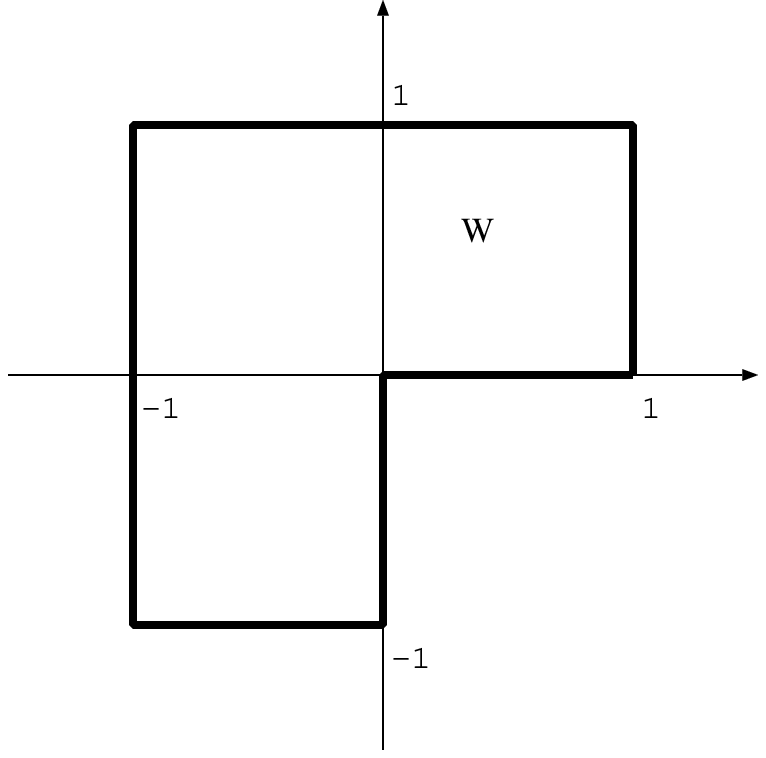}
%\vspace{-0.1cm}
\hfil
\includegraphics[width=.34\textwidth]{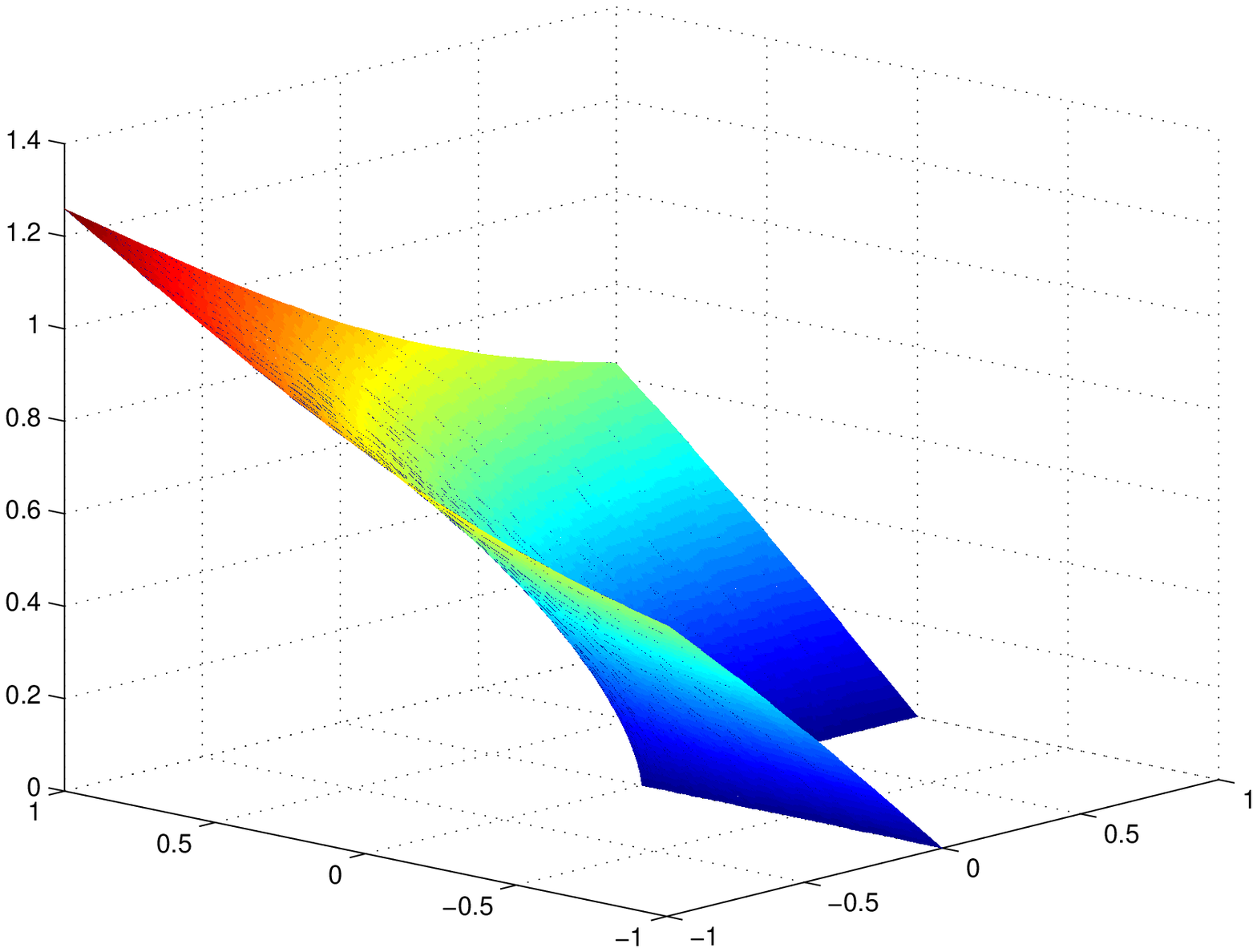}
%\hfil
%\includegraphics[width=.34\textwidth]{figuras/funcionsingular2}
\caption{The domain $\Omega$ where the problem~\eqref{E:problema general} is posed and the function $u$ which is the solution of the problem in each example.}\label{f:solucion}
\end{center}
\end{figure}

We consider the Adaptive Algorithm using different marking strategies, namely, \emph{global refinement}, \emph{maximum strategy} with $\theta=0.7$ and \emph{D\"orfler's strategy} with $\theta=0.5$ (see~\cite{Alberta}).

We implemented the Adaptive Algorithm using the finite element toolbox ALBERTA~\cite{Alberta}. We iterated the algorithm until the global error estimator was below $10^{-6}$ or the number of degrees of freedom exceeded $5\times 10^5$. We tested the limits of our theory by trying with some functions $\alpha$ which did not satisfy all the assumptions of the theoretical results above.

%\clearpage

% Ejemplo 1
\begin{example}[Optimal rate of convergence when $\alpha$ satisfies the hypotheses]\label{Ex:todo_ok}
As a first example, in order to study experimentally the rate of convergence of the Adaptive Algorithm, we consider
$$\alpha(t)=\frac{1}{1+t}+\frac{1}{2},\qquad t>0,$$
which satisfies the hypotheses to guarantee the convergence (see Figure~\ref{f:todo_ok}), i.e., $\alpha$ is a $\CCC^1$-function, and there exist positive constant $c_a$ and $C_a$ such that
\begin{equation}\label{E:Hip beta segunda}
c_a\le\alpha(t^2)+2t^2\alpha'(t^2)\le C_a,\qquad\forall\,t>0,
\end{equation}
and
\begin{equation}\label{E:Hip alpha decreciente}
\text{$\alpha$ is monotone decreasing, i.e., $\alpha'(t)\le 0$ for all $t>0$.}
\end{equation}

\begin{figure}[h!tbp]
\begin{center}
\psfrag{alpha}{\hspace{-3pt}\tiny $\alpha(t)$}
\psfrag{betasegundabetasegundabetasegunda}{\tiny$\alpha(t^2)+2t^2\alpha'(t^2)$}
\psfrag{t}{\tiny $t$}
\includegraphics[width=.40\textwidth]{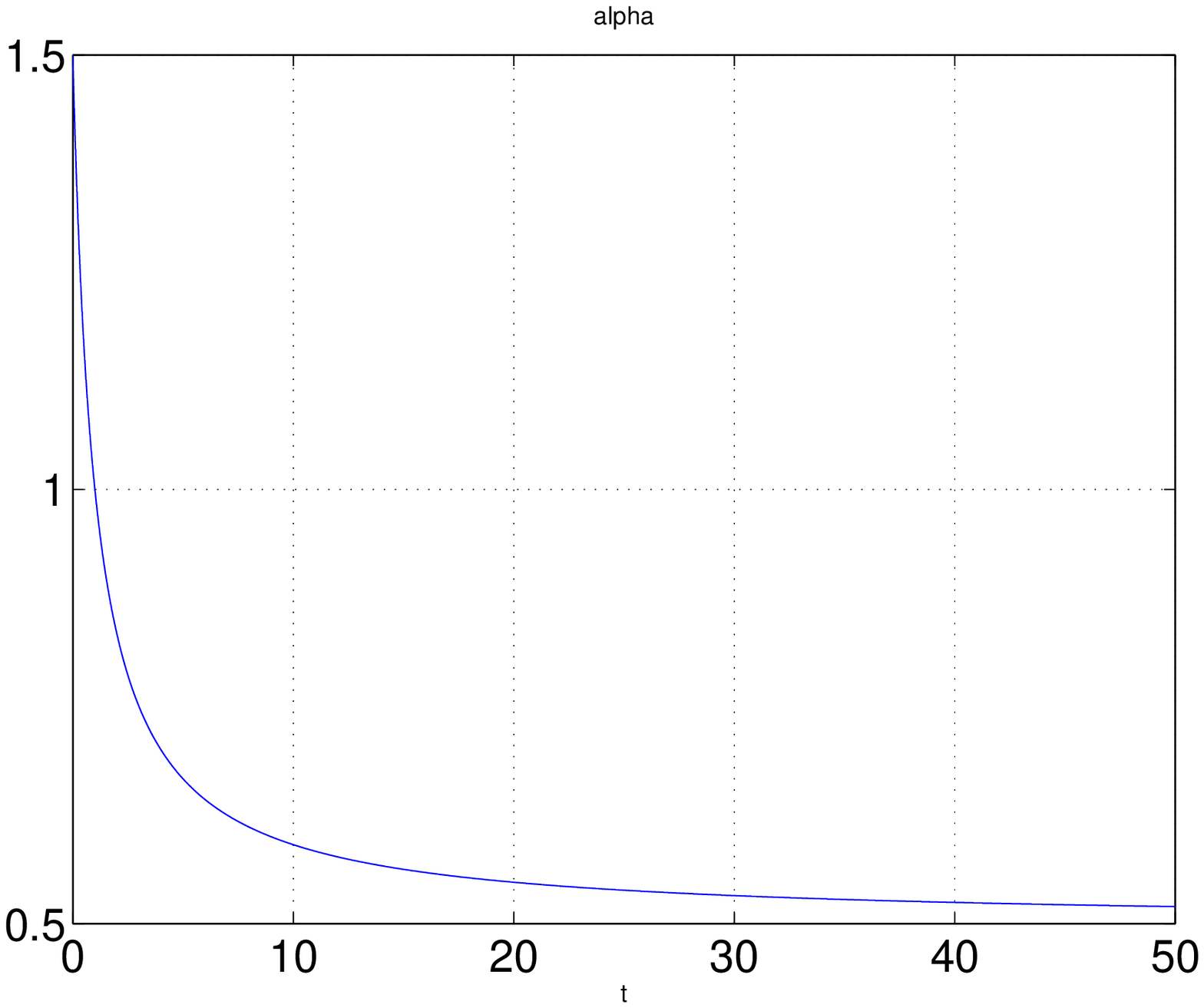}
\hfil
\includegraphics[width=.40\textwidth]{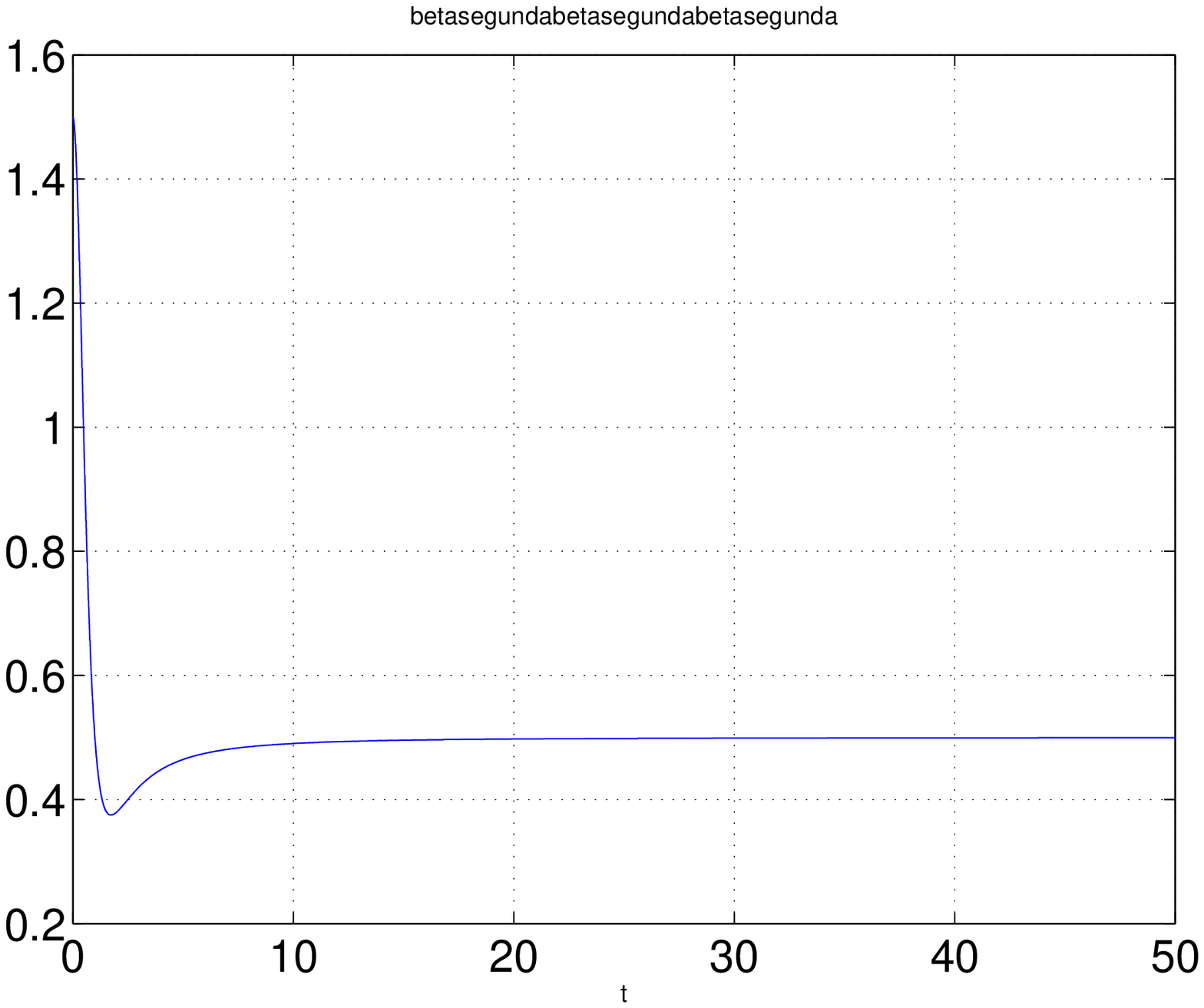}
\caption{The function $\alpha(t)=\frac{1}{1+t}+\frac{1}{2}$, of Example~\ref{Ex:todo_ok} satisfies the properties we require to guarantee the convergence.}\label{f:todo_ok}
\end{center}
\end{figure}

In Figure~\ref{f:todo_ok_error} we plot the $H^1(\Omega)$-error versus the number of degrees of freedom (DOFs), for finite elements of degree $\ell=1,2,3,4$. In this case, the rate of the convergence is optimal for adaptive strategies, that is, $\| u - u_k \|_{H^1(\Omega)} =O(\text{DOFs}_k^{-\ell/2})$. For global refinement, the observed order of convergence is $\text{DOFs}_k^{-1/3}$ for all tested polynomial degrees, due to the fact that the solution $u$ belongs to $H^{1+\delta}(\Omega)$, for all $0<\delta<\frac{2}{3}$, and does not belong to $H^{1+2/3}(\Omega)$. 

Note that, although the theory only guarantees the plain convergence for linear elements (cf. Theorem~\ref{T:main}), the numerical results suggest that the method works for any polynomial degree (see Remark~\ref{R:polynomial degree}), and the convergence rate is optimal.

\begin{figure}[h!tbp]
\begin{center}
\psfrag{Refinamiento Global}{\tiny Global refinement}
\psfrag{Estrategia del Maximo}{\tiny Maximum Strategy}
\psfrag{Estrategia de Dorfler}{\tiny D\"orfler Strategy}
\psfrag{Pendiente Optima}{\tiny Optimal slope}
\psfrag{Error}{\hspace{-10pt}\tiny $H^1$-error}
\psfrag{DOFs}{\tiny DOFs}
\psfrag{Aproximacion con polinomios de grado 1}{ \tiny Polynomial degree $\ell=1$}
\psfrag{Aproximacion con polinomios de grado 2}{ \tiny Polynomial degree $\ell=2$}
\psfrag{Aproximacion con polinomios de grado 3}{ \tiny Polynomial degree $\ell=3$}
\psfrag{Aproximacion con polinomios de grado 4}{ \tiny Polynomial degree $\ell=4$}
\includegraphics[width=.40\textwidth]{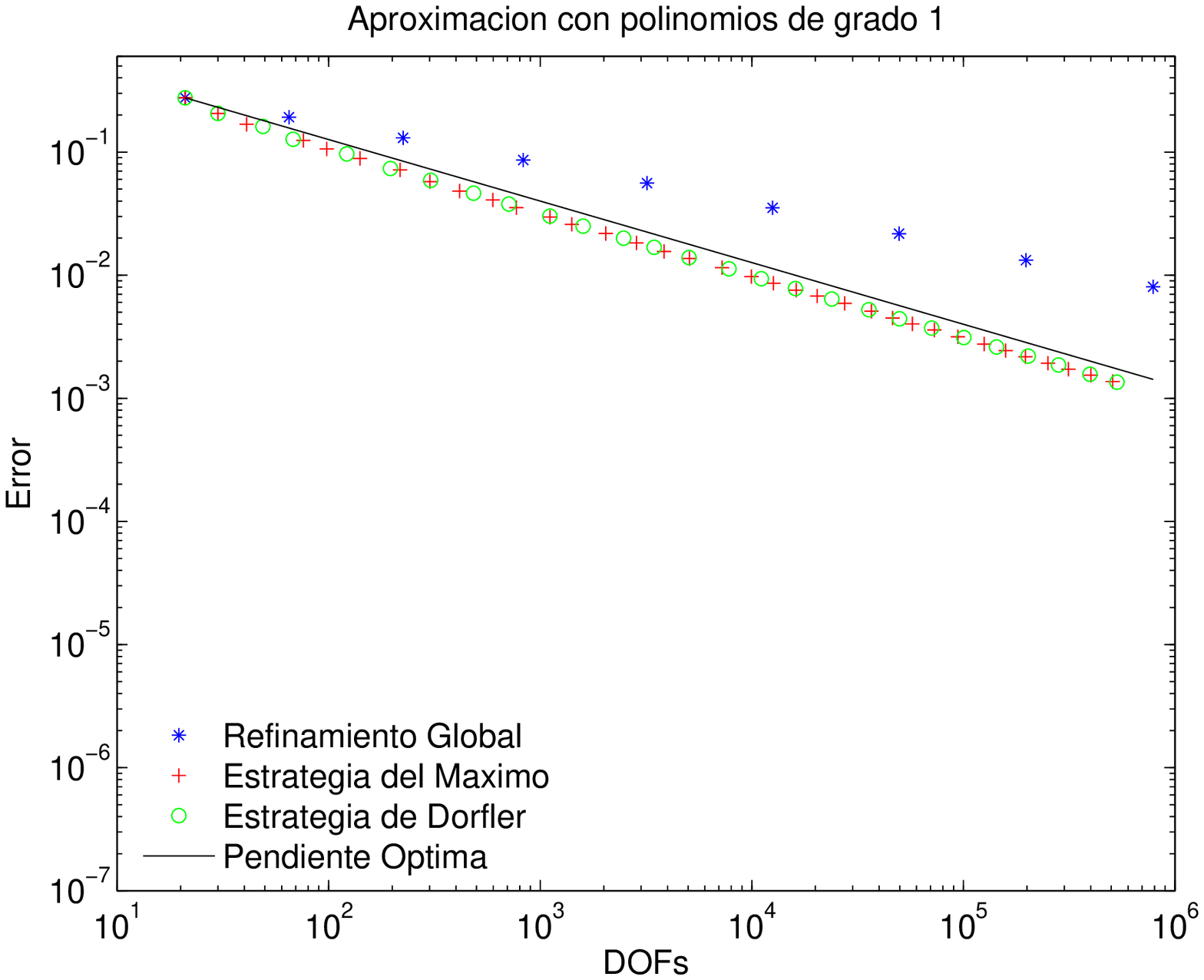}\hfil
\includegraphics[width=.40\textwidth]{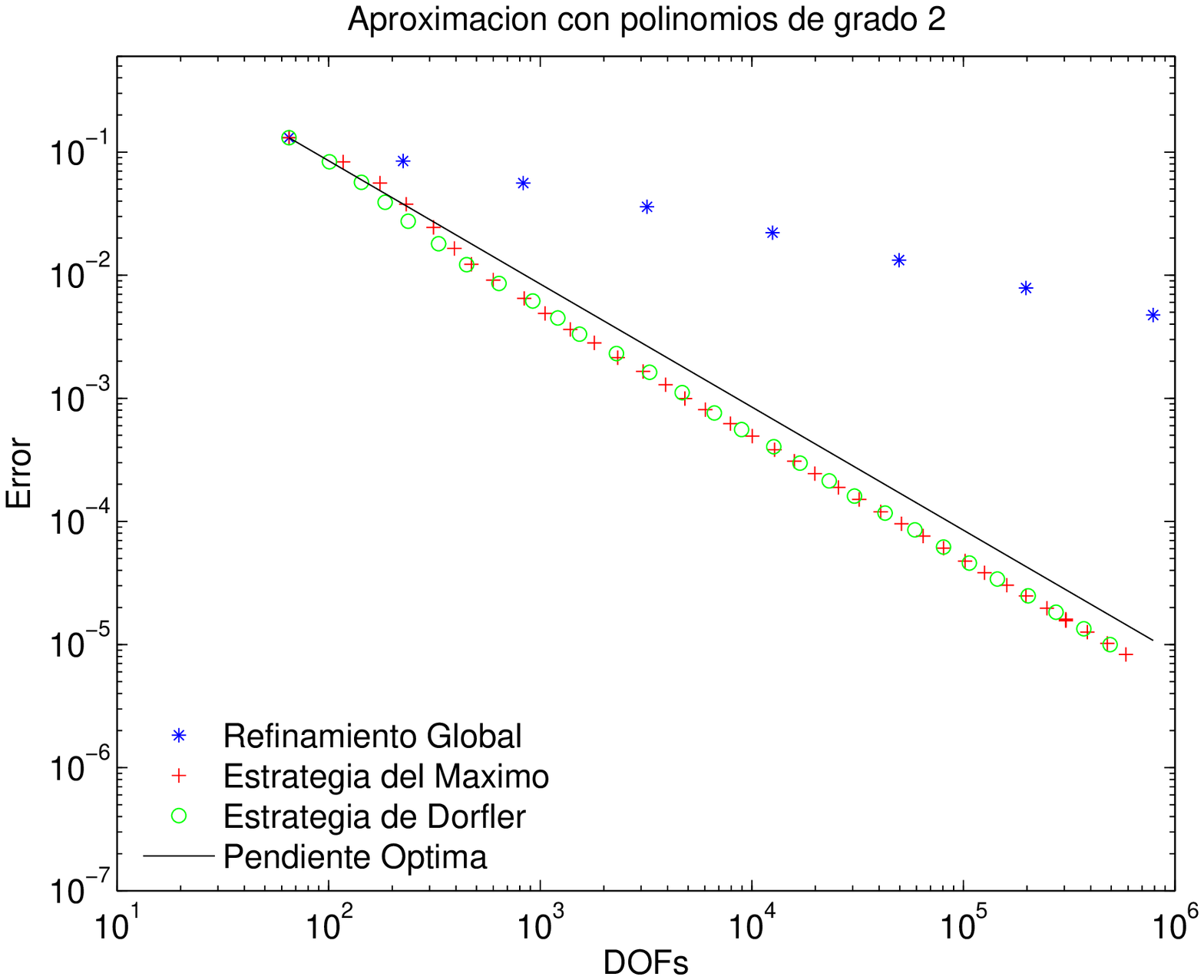}

\bigskip
\includegraphics[width=.40\textwidth]{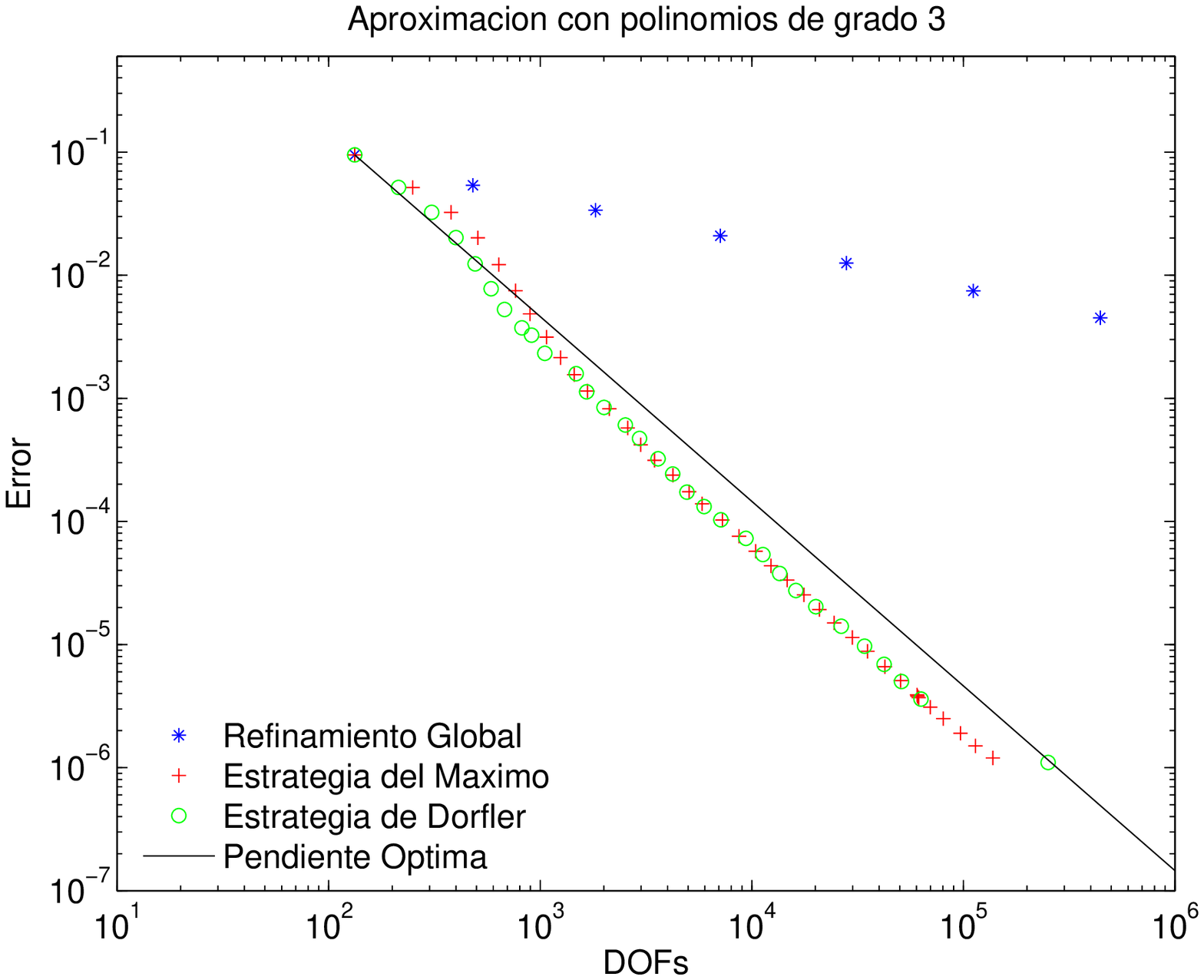}\hfil
\includegraphics[width=.40\textwidth]{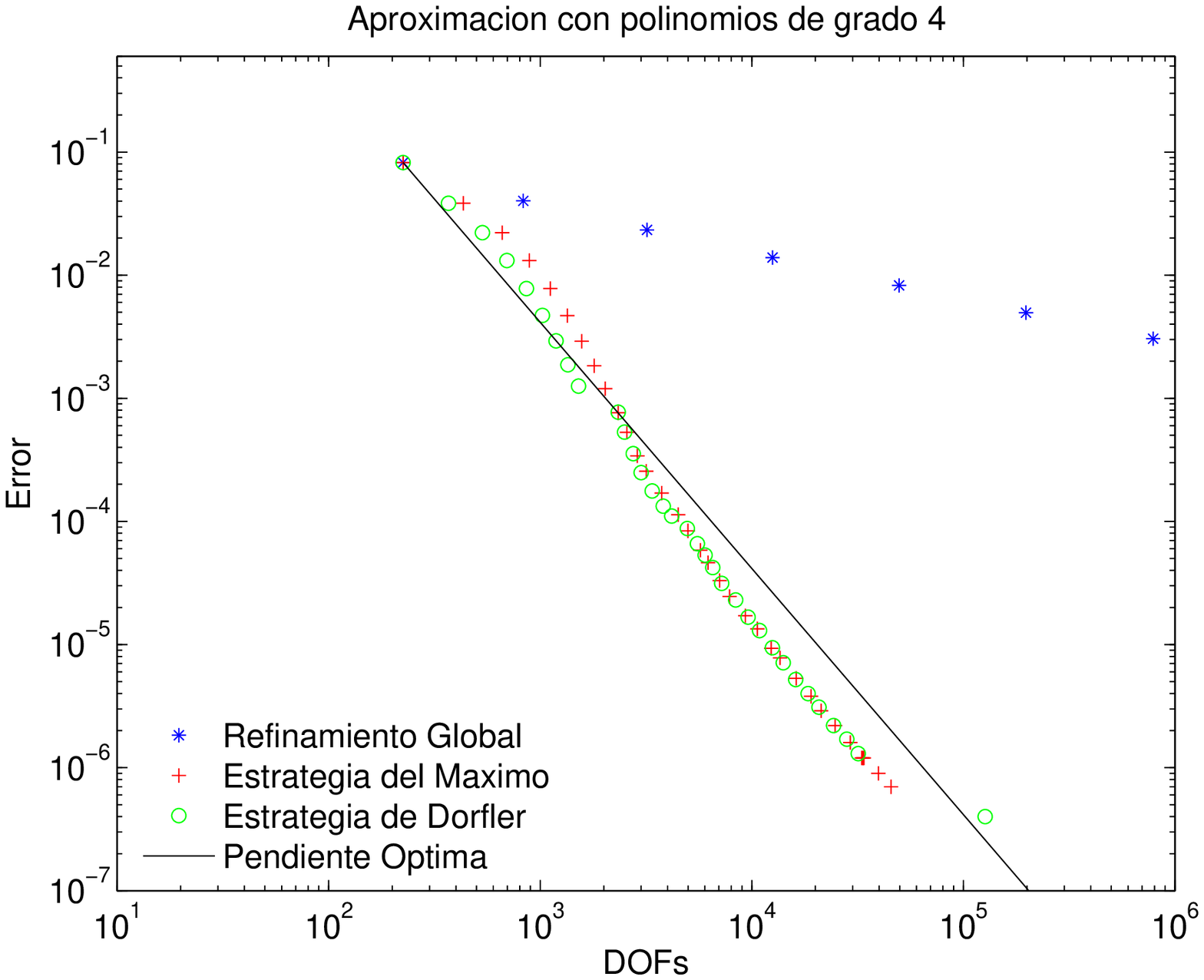}
\caption{
Error versus DOFs for Example~\ref{Ex:todo_ok}.
%Gr\'aficos del error en norma $H^1(\Omega)$ en funci\'on de los grados de libertad (DOFs) utilizados, para la aproximaci\'on con distintos grados polinomiales, considerando el Ejemplo~\ref{Ex:todo_ok}.
We present the $H^1(\Omega)$-error between the exact solution and discrete solutions, versus the number of degrees of freedom (DOFs) used to represent each of them. We note that the convergence rate is optimal for the considered adaptive strategies, but not for global refinement, due to the fact that the solution $u$ is not sufficiently smooth. In this case, $\alpha(t)=\frac{1}{1+t}+\frac{1}{2}$ satisfies all the properties established to guarantee the convergence with linear finite elements. The numerical experiments suggest that the method converges with optimal rate for any polynomial degree.}\label{f:todo_ok_error}
\end{center}
\end{figure}
\end{example}

%\clearpage

% Ejemplo 2
\begin{example}[About the hypothesis~\eqref{E:Hip beta segunda}]\label{Ex:betasegundabad}
We consider the function
$$\alpha(t)=\frac{1}{1+t}+\frac{1}{10},\qquad t>0,$$
which is monotone decreasing, i.e., satisfies~\eqref{E:Hip alpha decreciente}, but not~\eqref{E:Hip beta segunda}, as it is shown in Figure~\ref{f:betasegundabad}. Since~\eqref{E:Hip beta segunda} guarantees the well-posedness of problem~\eqref{E:problema general} (uniqueness and stability), we could be facing an example with multiple solutions. %in this case the problem could have other solutions besides $u$ given by~\eqref{E:u polar}.

\begin{figure}[h!tbp]
\begin{center}
\psfrag{alpha}{\hspace{-3pt}\tiny $\alpha(t)$}
\psfrag{betasegundabetasegundabetasegunda}{\tiny$\alpha(t^2)+2t^2\alpha'(t^2)$}
\psfrag{t}{\tiny $t$}
\includegraphics[width=.40\textwidth]{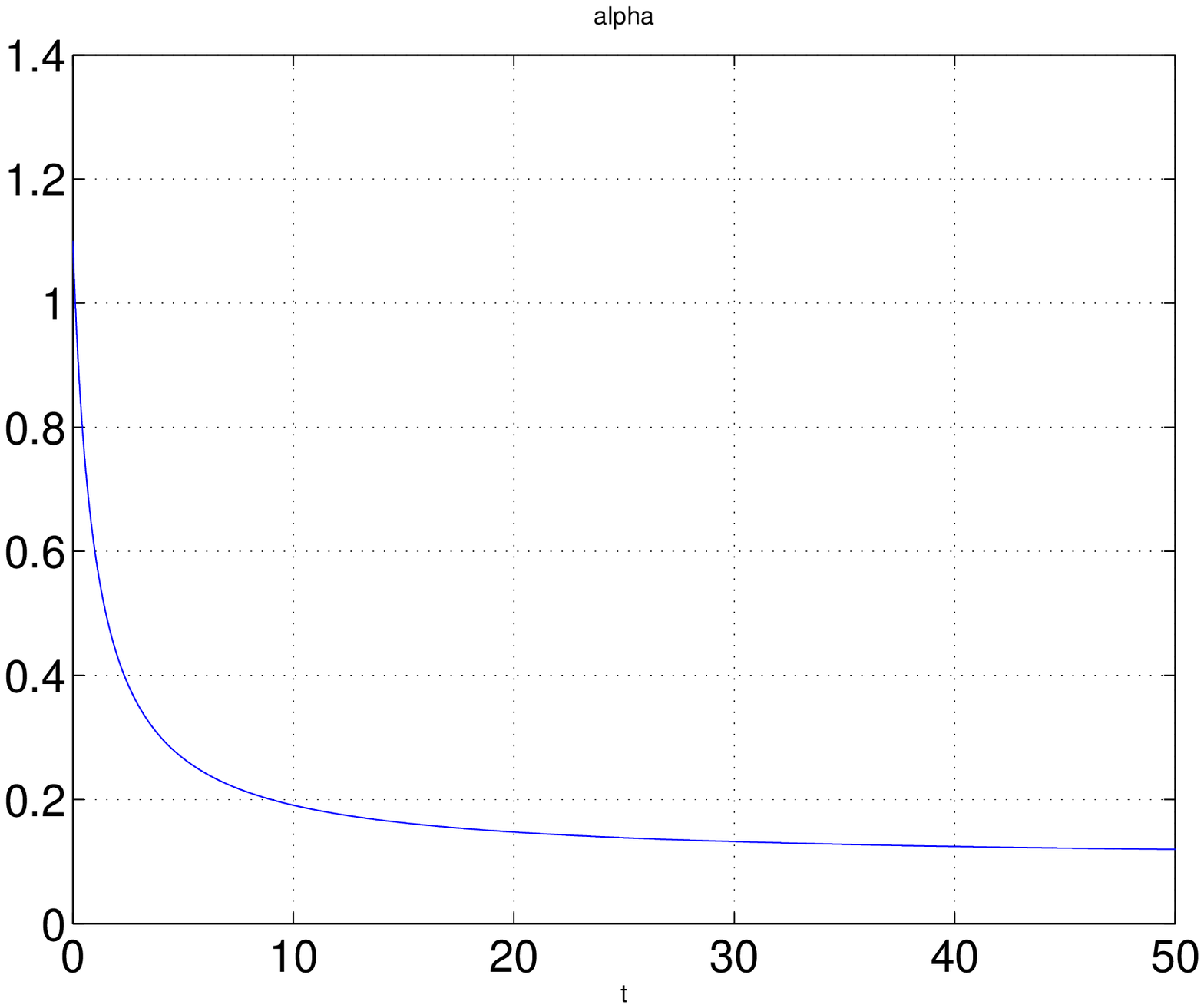}
\hfil
\includegraphics[width=.40\textwidth]{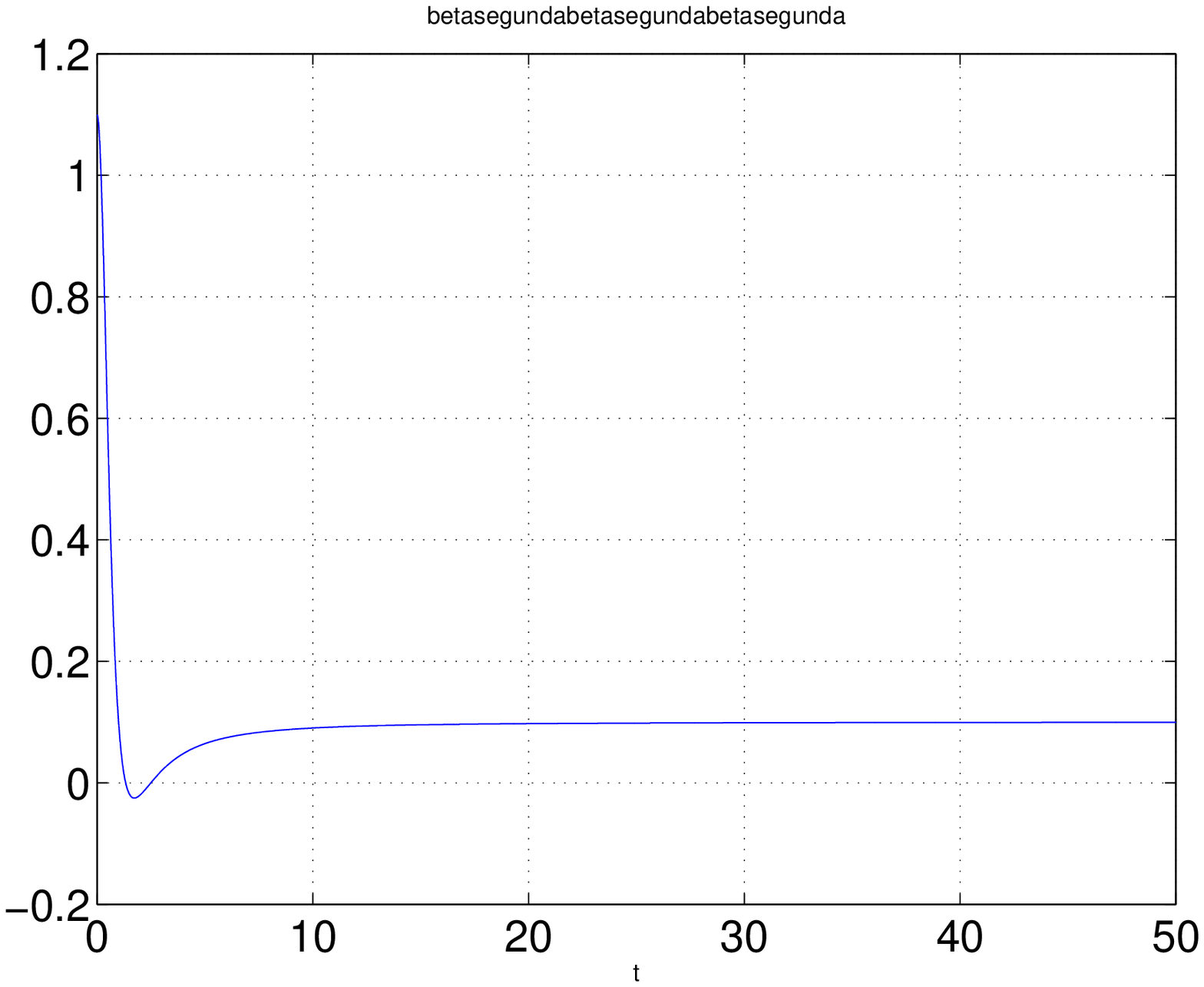}
\caption{The function $\alpha(t)=\frac{1}{1+t}+\frac{1}{10}$ of Example~\ref{Ex:betasegundabad} does not satisfy $\alpha(t^2)+2t^2\alpha'(t^2)>0$ for all $t>0$.}\label{f:betasegundabad}
\end{center}
\end{figure}

In Figure~\ref{f:betasegundabad_error} we plot the $H^1(\Omega)$-error versus the number degrees of freedom, for different polynomial degrees. For $\ell=3$ and $\ell=4$ the algorithm stopped with an estimator below the desired tolerance $10^{-6}$, although the error is around $10^{-2}$ in all cases. On the other hand, as we can see in Figure~\ref{f:betasegundabad_estimador}, the global error estimator decreases with optimal rate for the adaptive strategies, indicating that the adaptive algorithm may be converging to another solution of the nonlinear problem, different from the one given by~\eqref{E:u polar}.

\begin{figure}[h!tbp]
\begin{center}
\psfrag{Refinamiento Global}{\tiny Global refinement}
\psfrag{Estrategia del Maximo}{\tiny Maximum Strategy}
\psfrag{Estrategia de Dorfler}{\tiny D\"orfler Strategy}
\psfrag{Pendiente Optima}{\tiny Optimal slope}
\psfrag{Error}{\hspace{-10pt}\tiny $H^1$-error}
\psfrag{DOFs}{\tiny DOFs}
\psfrag{Aproximacion con polinomios de grado 1}{ \tiny Polynomial degree $\ell=1$}
\psfrag{Aproximacion con polinomios de grado 2}{ \tiny Polynomial degree $\ell=2$}
\psfrag{Aproximacion con polinomios de grado 3}{ \tiny Polynomial degree $\ell=3$}
\psfrag{Aproximacion con polinomios de grado 4}{ \tiny Polynomial degree $\ell=4$}
\includegraphics[width=.40\textwidth]{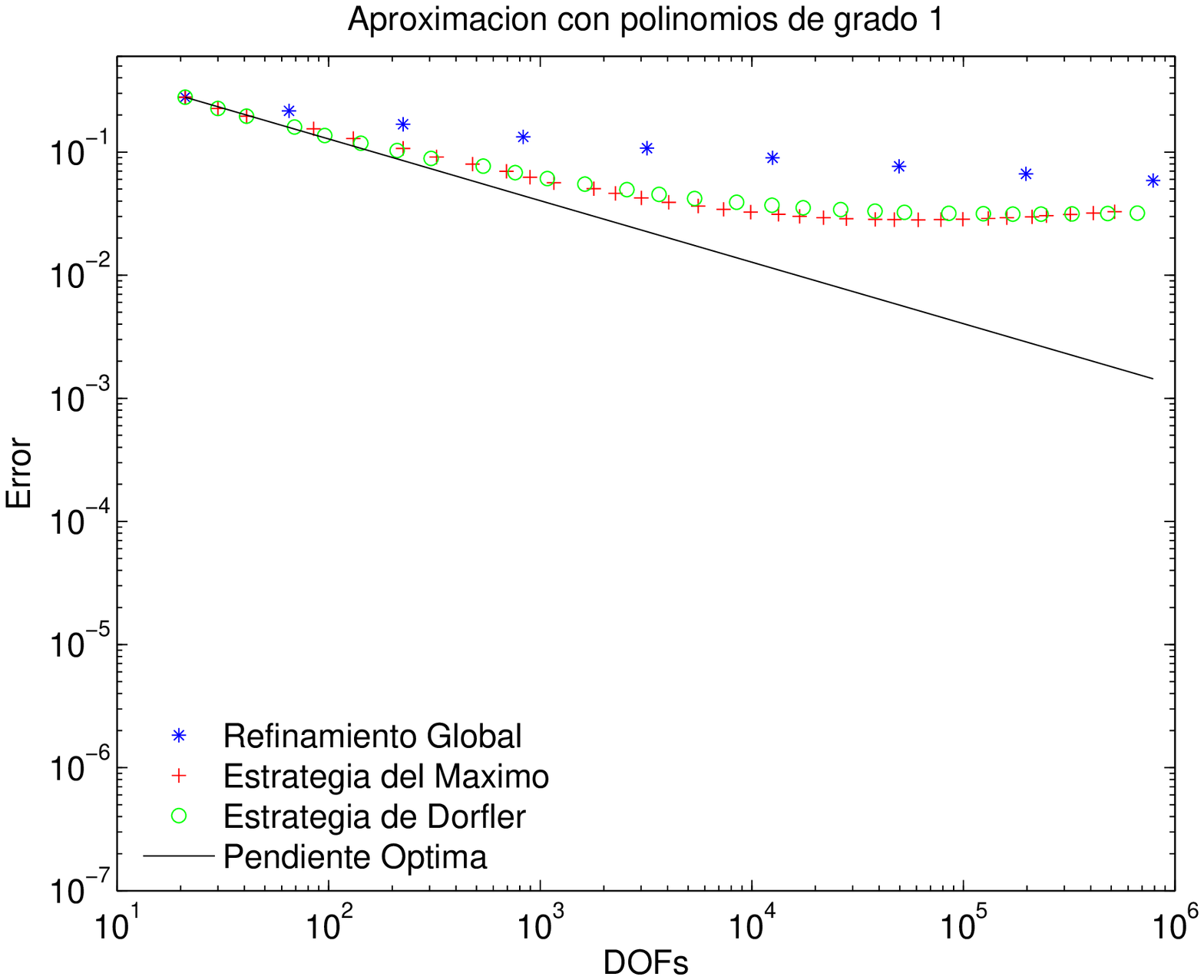}\hfil
\includegraphics[width=.40\textwidth]{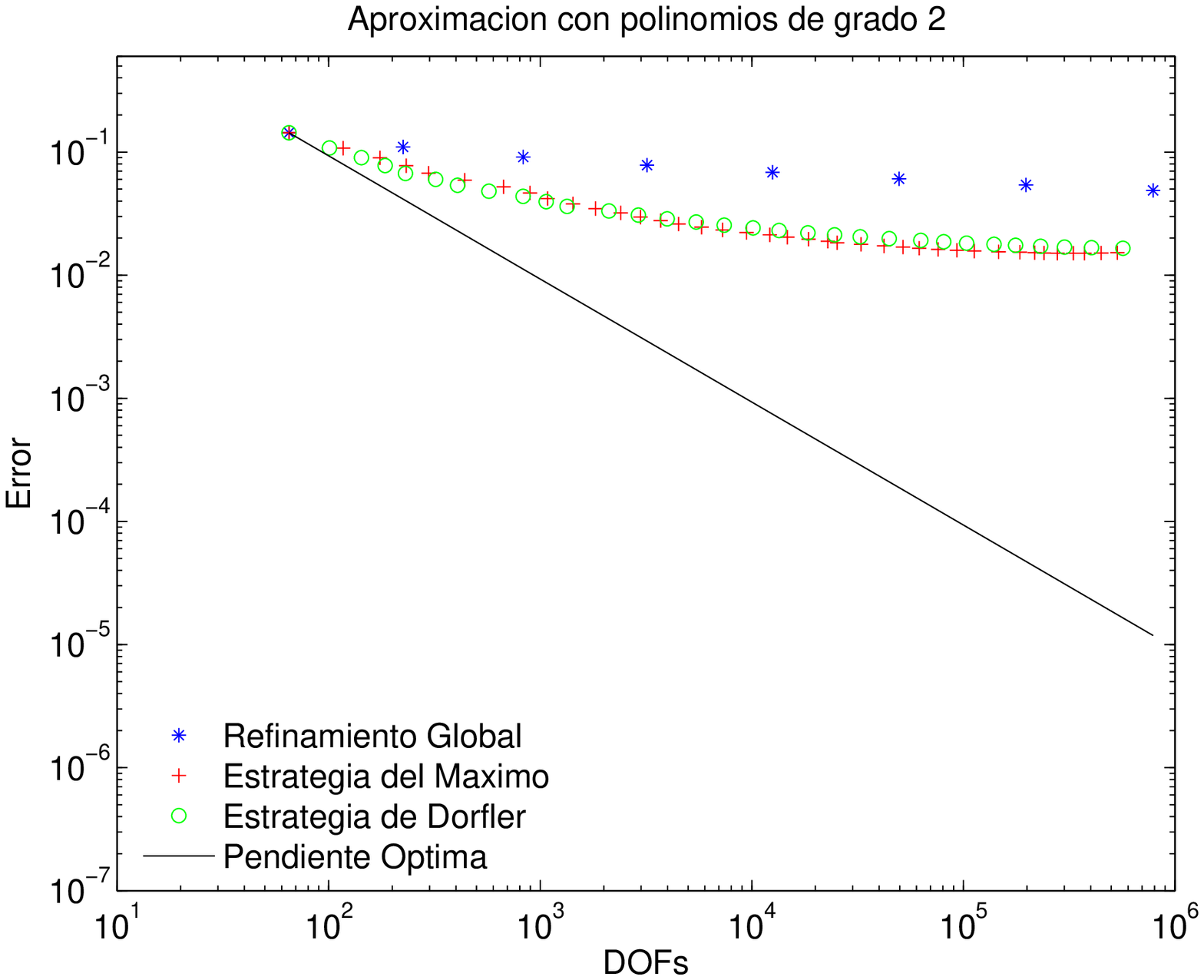}

\bigskip
\includegraphics[width=.40\textwidth]{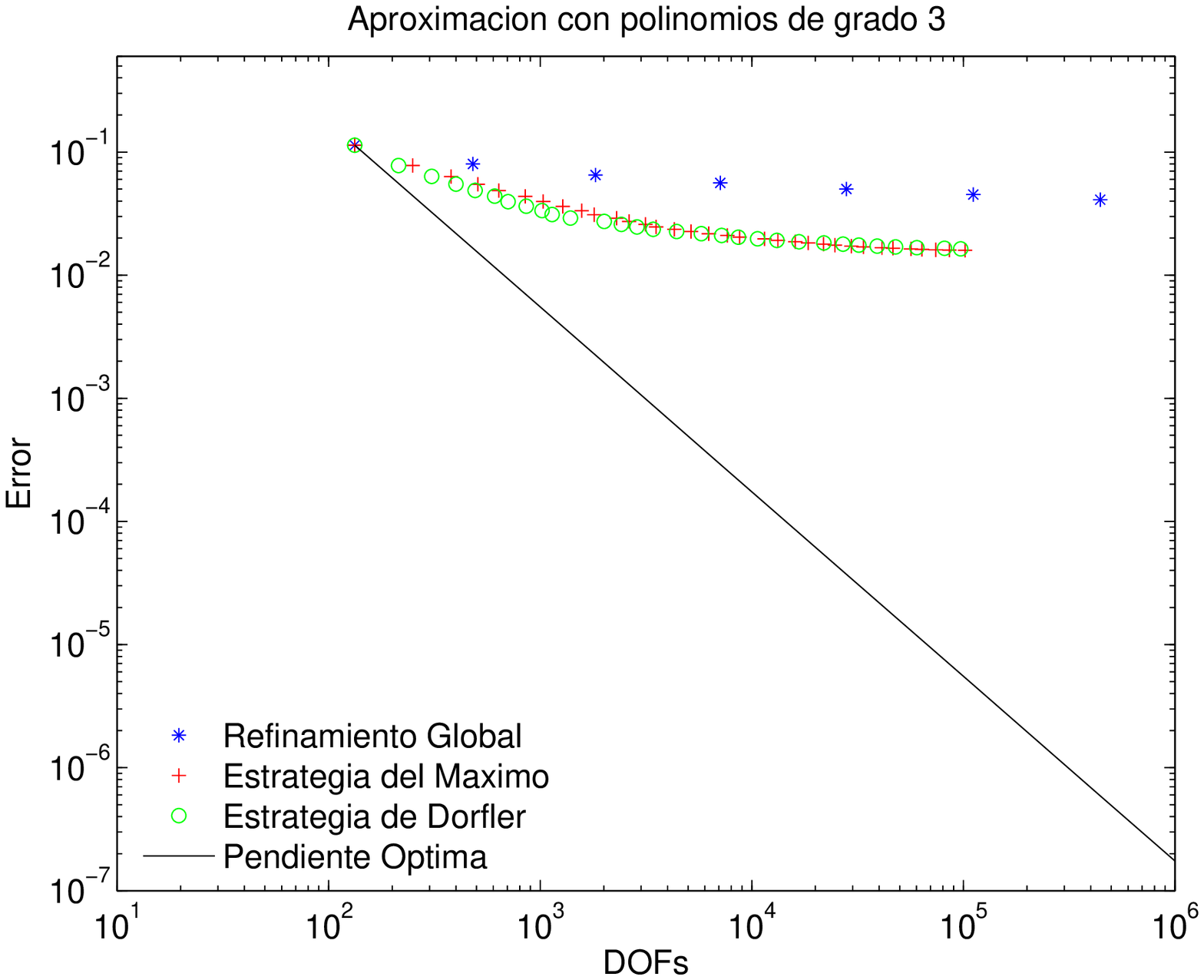}\hfil
\includegraphics[width=.40\textwidth]{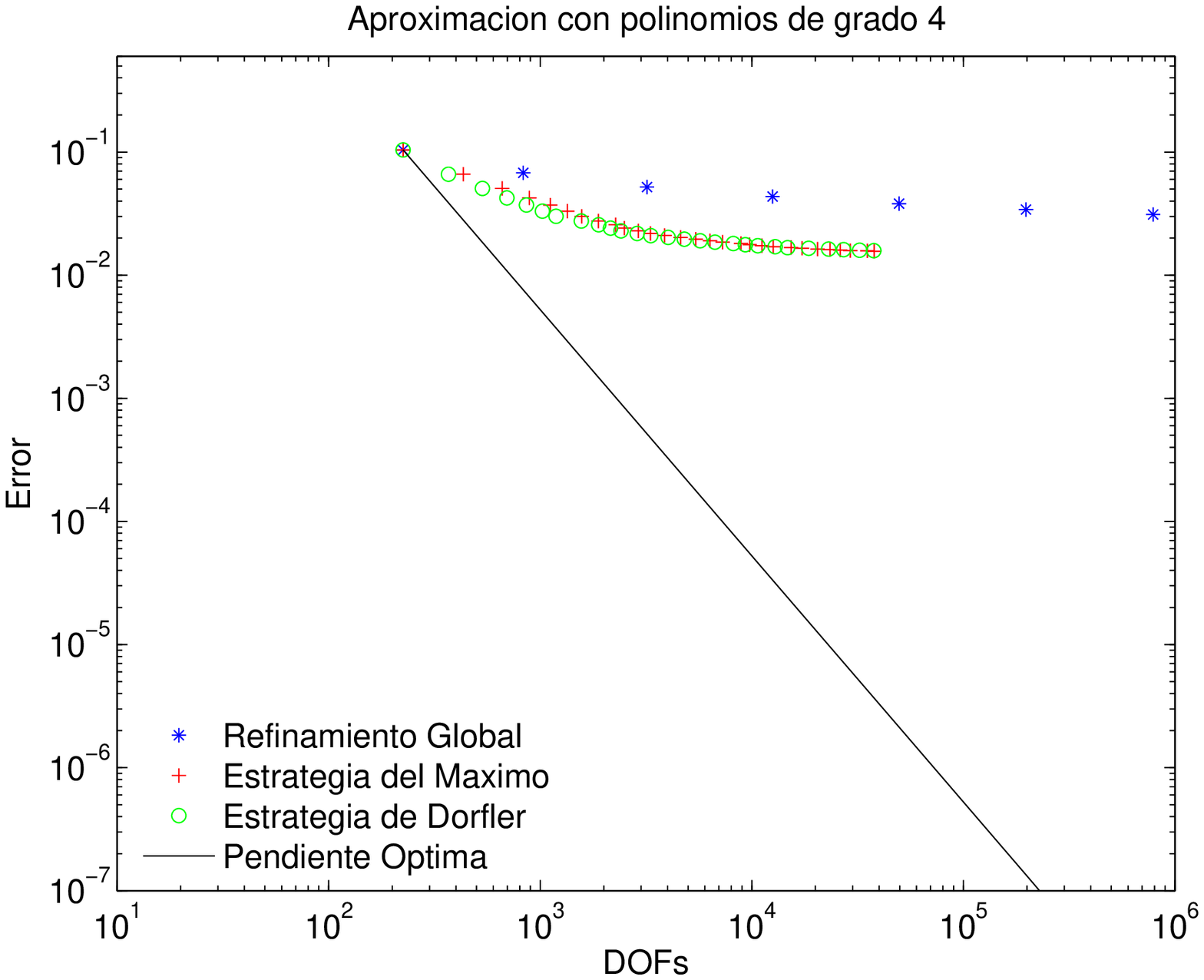}
\caption{
Error versus DOFs for Example~\ref{Ex:betasegundabad}.
%Gr\'aficos del error en norma $H^1(\Omega)$ en funci\'on de los grados de libertad (DOFs) utilizados, para la aproximaci\'on con distintos grados polinomiales, considerando el Ejemplo~\ref{Ex:betasegundabad}. 
We present the $H^1(\Omega)$-error between the exact and discrete solutions, versus DOFs. We observe that the method does not converge, but the error stagnates around $10^{-2}$. % This is an indication that the method could be converging, but not to the solution~\eqref{E:u polar} which we are considering.
Since $\alpha(t)=\frac{1}{1+t}+\frac{1}{10}$ does not satisfy the condition which guarantees uniqueness of solutions, and based on the fact that the a posteriori error estimator does tend to zero (see Figure~\ref{f:betasegundabad_estimador}) we conclude that the method converges to \emph{another solution} of the same problem, different from the one given by~\eqref{E:u polar}.
% This conjecture requires a more careful analysis and it will be study in future researchs.
}\label{f:betasegundabad_error}
\end{center}
\end{figure}

%\note{Aqu\'i viene un comentario}

Based on this remark, it seems that the adaptive algorithm converges to a solution $u_1$ such that $\|u-u_1\|_{H^1(\Omega)}\approx 10^{-2}$. We recall that $\alpha$ does not satisfy the condition~\eqref{E:Hip beta segunda} which guarantees the uniqueness of solutions.

\begin{figure}[h!tbp]
\begin{center}
\psfrag{Refinamiento Global}{\tiny Global refinement}
\psfrag{Estrategia del Maximo}{\tiny Maximum Strategy}
\psfrag{Estrategia de Dorfler}{\tiny D\"orfler Strategy}
\psfrag{Pendiente Optima}{\tiny Optimal slope}
\psfrag{Estimador global}{\hspace{-10pt}\tiny global error estimator}
\psfrag{DOFs}{\tiny DOFs}
\psfrag{Aproximacion con polinomios de grado 1}{ \tiny Polynomial degree $\ell=1$}
\psfrag{Aproximacion con polinomios de grado 2}{ \tiny Polynomial degree $\ell=2$}
\psfrag{Aproximacion con polinomios de grado 3}{ \tiny Polynomial degree $\ell=3$}
\psfrag{Aproximacion con polinomios de grado 4}{ \tiny Polynomial degree $\ell=4$}
\includegraphics[width=.40\textwidth]{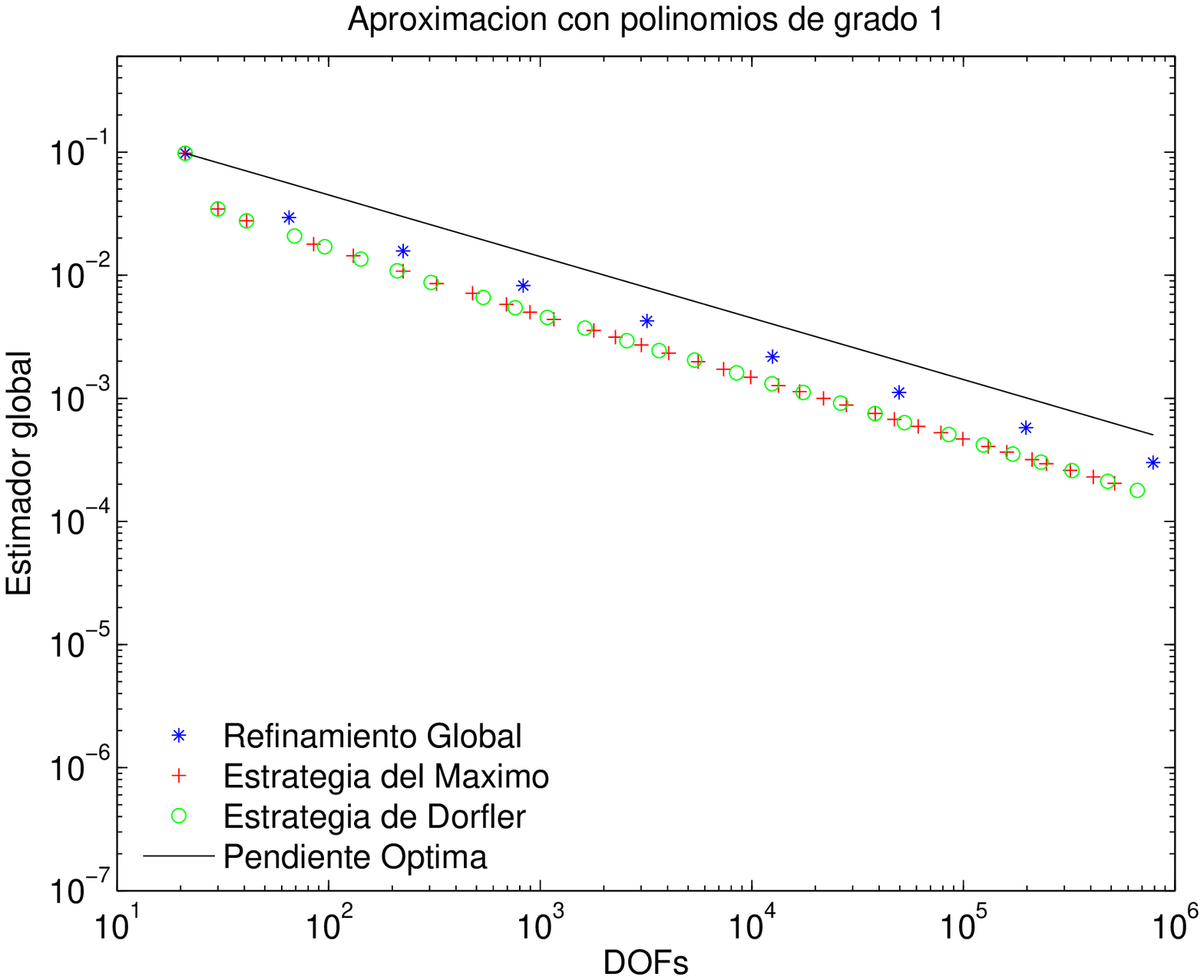}\hfil
\includegraphics[width=.40\textwidth]{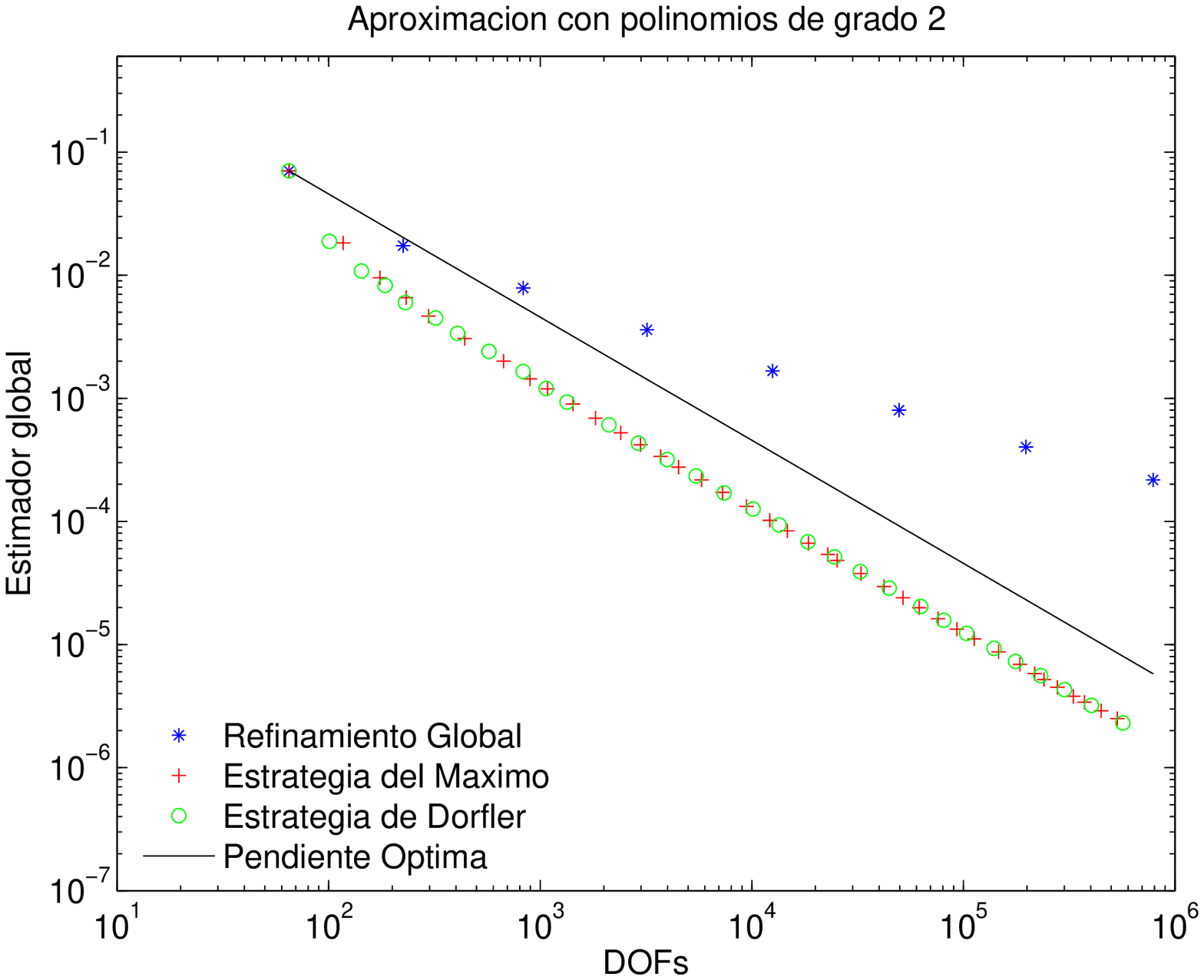}

\bigskip
\includegraphics[width=.40\textwidth]{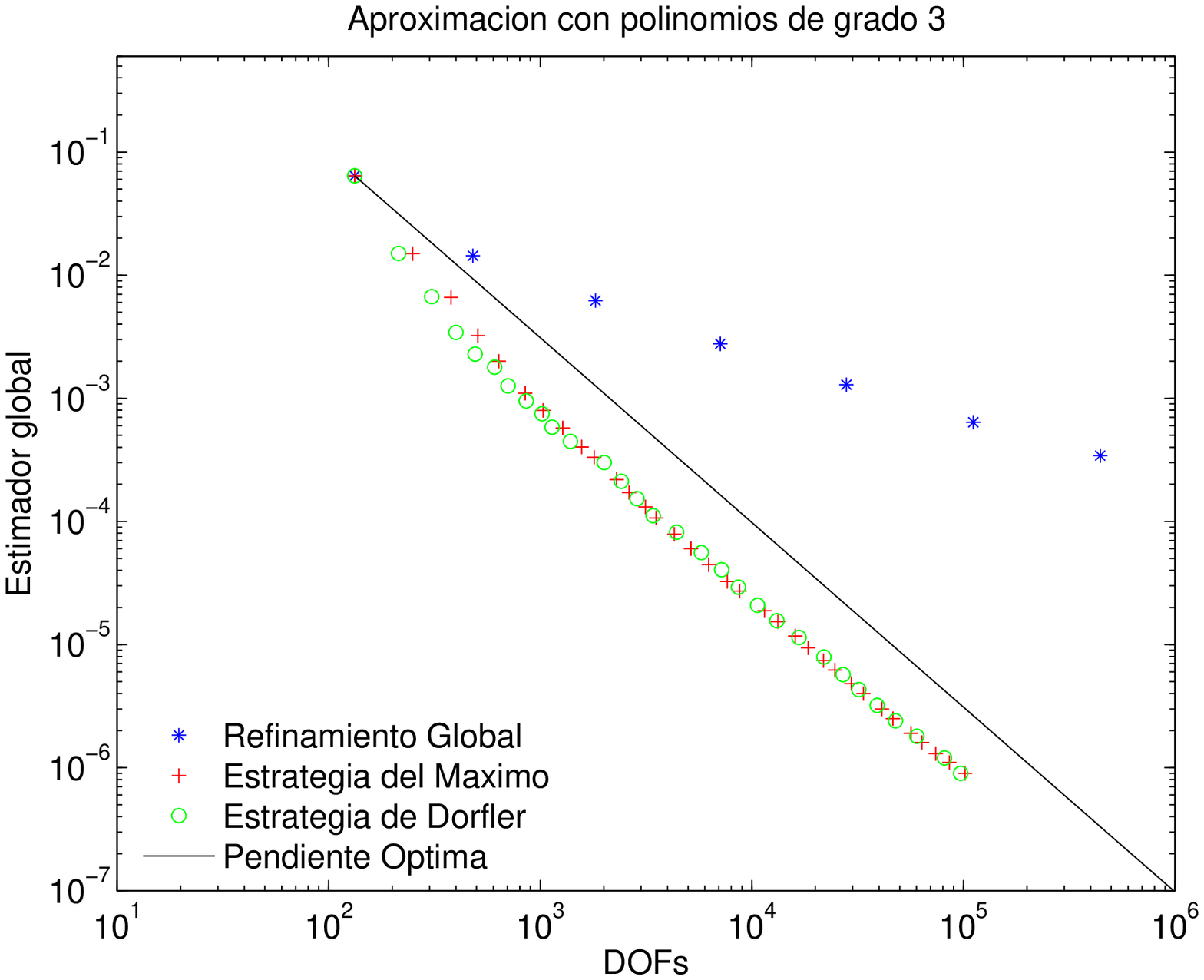}\hfil
\includegraphics[width=.40\textwidth]{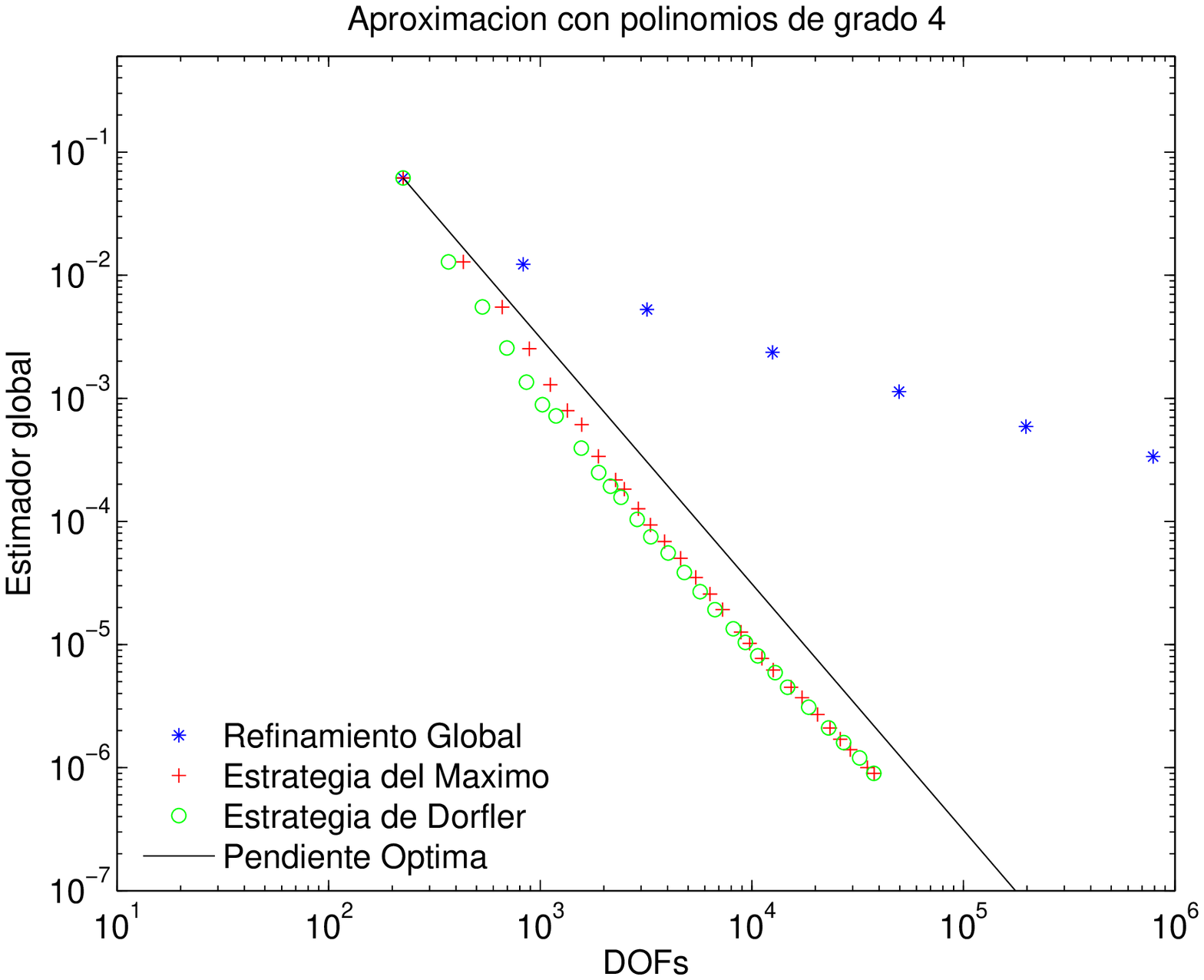}
\caption{
Estimator versus DOFs for Example~\ref{Ex:betasegundabad}.
%Gr\'aficos del error en norma $H^1(\Omega)$ en funci\'on de los grados de libertad (DOFs) utilizados, para la aproximaci\'on con distintos grados polinomiales, considerando el Ejemplo~\ref{Ex:betasegundabad}. 
We present the global error estimator $\eta_k(\Tau_k)$ versus DOFs used to represent each discrete solution. We note that for the adaptive strategies the global error estimator decreases with the optimal rate for the $H^1$-error, although the error does not tend to zero (see Figure~\ref{f:betasegundabad_error}). In this case, $\alpha(t)=\frac{1}{1+t}+\frac{1}{10}$ does not satisfy the condition which guarantees uniqueness of solutions. It seems that the method converges to \emph{another solution} of the problem.% We will study this situation in future researchs.
}\label{f:betasegundabad_estimador}
\end{center}
\end{figure}

\end{example}

%\clearpage

% Ejemplo 3
\begin{example}[About the hypothesis~\eqref{E:Hip alpha decreciente}]\label{Ex:creciente}
We consider
$$\alpha(t)=-\frac12\e^{-\frac{3}{2}t}+1,\qquad t>0,$$
which satisfies the hypothesis~\eqref{E:Hip beta segunda} related to the  well-posedness of the problem but not~\eqref{E:Hip alpha decreciente}, because $\alpha$ is monotone increasing (see Figure~\ref{f:creciente}).

\begin{figure}[h!tbp]
\begin{center}
\psfrag{alpha}{\hspace{-3pt}\tiny $\alpha(t)$}
\psfrag{betasegundabetasegundabetasegunda}{\tiny$\alpha(t^2)+2t^2\alpha'(t^2)$}
\psfrag{t}{\tiny $t$}
\includegraphics[width=.40\textwidth]{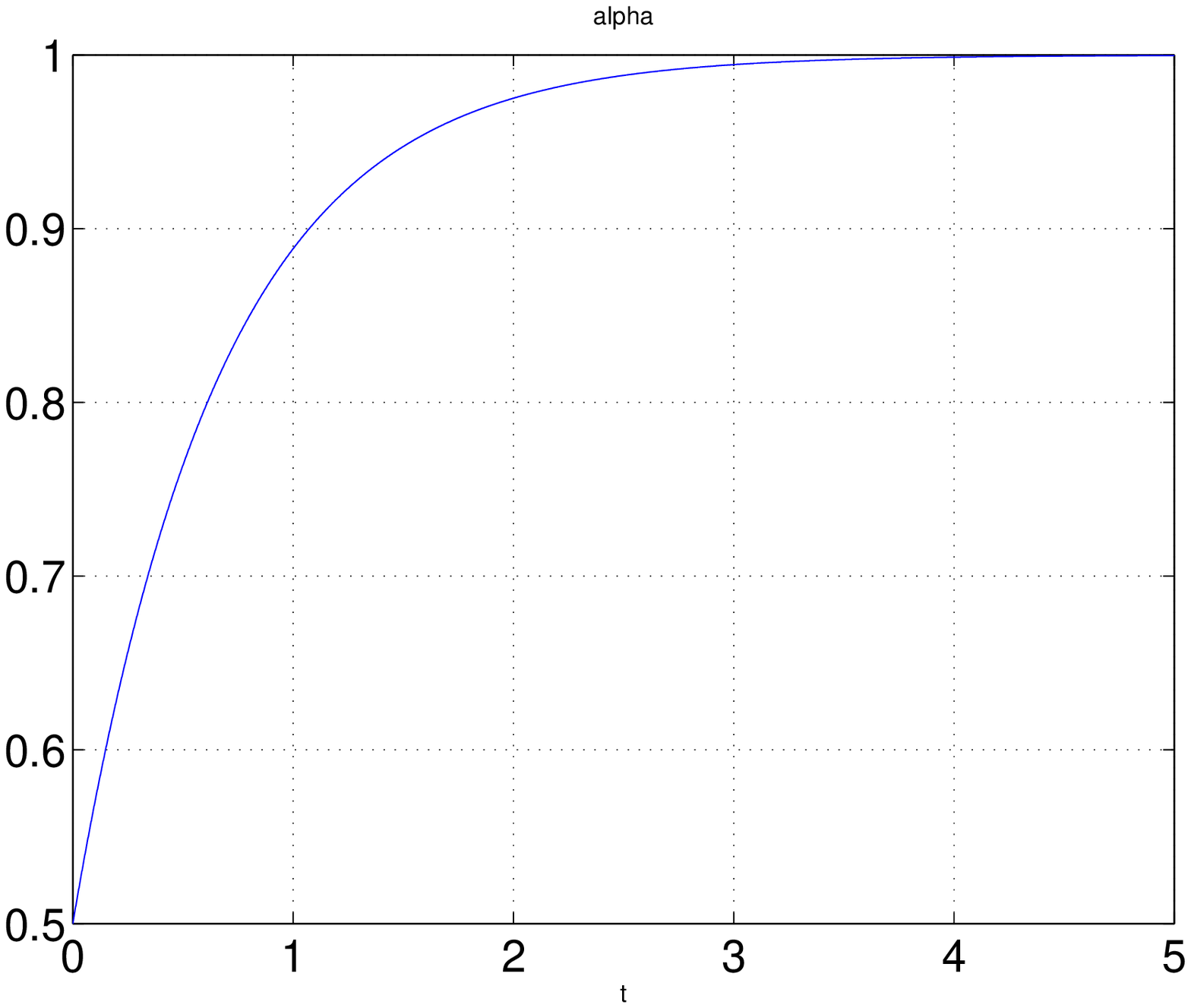}
\hfil
\includegraphics[width=.40\textwidth]{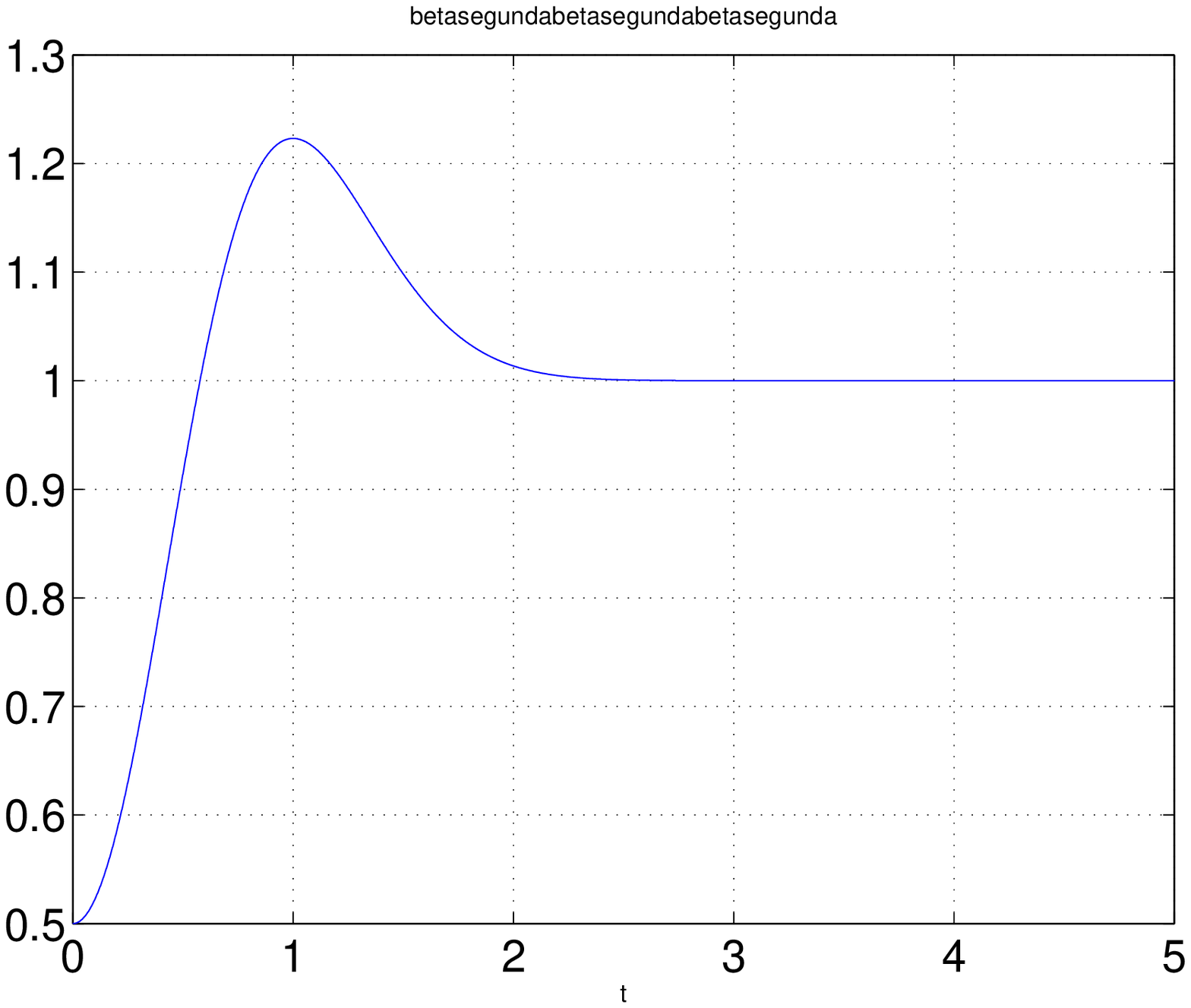}
\caption{The function $\alpha(t)=-\frac12\e^{-\frac{3}{2}t}+1$ of Example~\ref{Ex:creciente} satisfies the hypothesis~\eqref{E:Hip beta segunda} but is monotone increasing.}\label{f:creciente}
\end{center}
\end{figure}

In Figure~\ref{f:creciente_error} we plot $H^1(\Omega)$-error versus the number of degrees of freedom, for finite elements of degrees $\ell = 1, 2, 3, 4$. Note that in this case the optimal convergence rate $\text{DOFs}^{-\ell/2}$ is still observed for the adaptive algorithm. This is an indication that the assumption~\eqref{E:Hip alpha decreciente} about $\alpha$ being monotone decreasing can be superfluous, and only an artificial requirement for the presented proof (see Lemma~\ref{L:key property}). A more detailed study about this hypothesis is subject of future research, and beyond the scope of this article.

\begin{figure}[h!tbp]
\begin{center}
\psfrag{Refinamiento Global}{\tiny Global refinement}
\psfrag{Estrategia del Maximo}{\tiny Maximum Strategy}
\psfrag{Estrategia de Dorfler}{\tiny D\"orfler Strategy}
\psfrag{Pendiente Optima}{\tiny Optimal slope}
\psfrag{Error}{\hspace{-10pt}\tiny $H^1$-error}
\psfrag{DOFs}{\tiny DOFs}
\psfrag{Aproximacion con polinomios de grado 1}{ \tiny Polynomial degree $\ell=1$}
\psfrag{Aproximacion con polinomios de grado 2}{ \tiny Polynomial degree $\ell=2$}
\psfrag{Aproximacion con polinomios de grado 3}{ \tiny Polynomial degree $\ell=3$}
\psfrag{Aproximacion con polinomios de grado 4}{ \tiny Polynomial degree $\ell=4$}
\includegraphics[width=.40\textwidth]{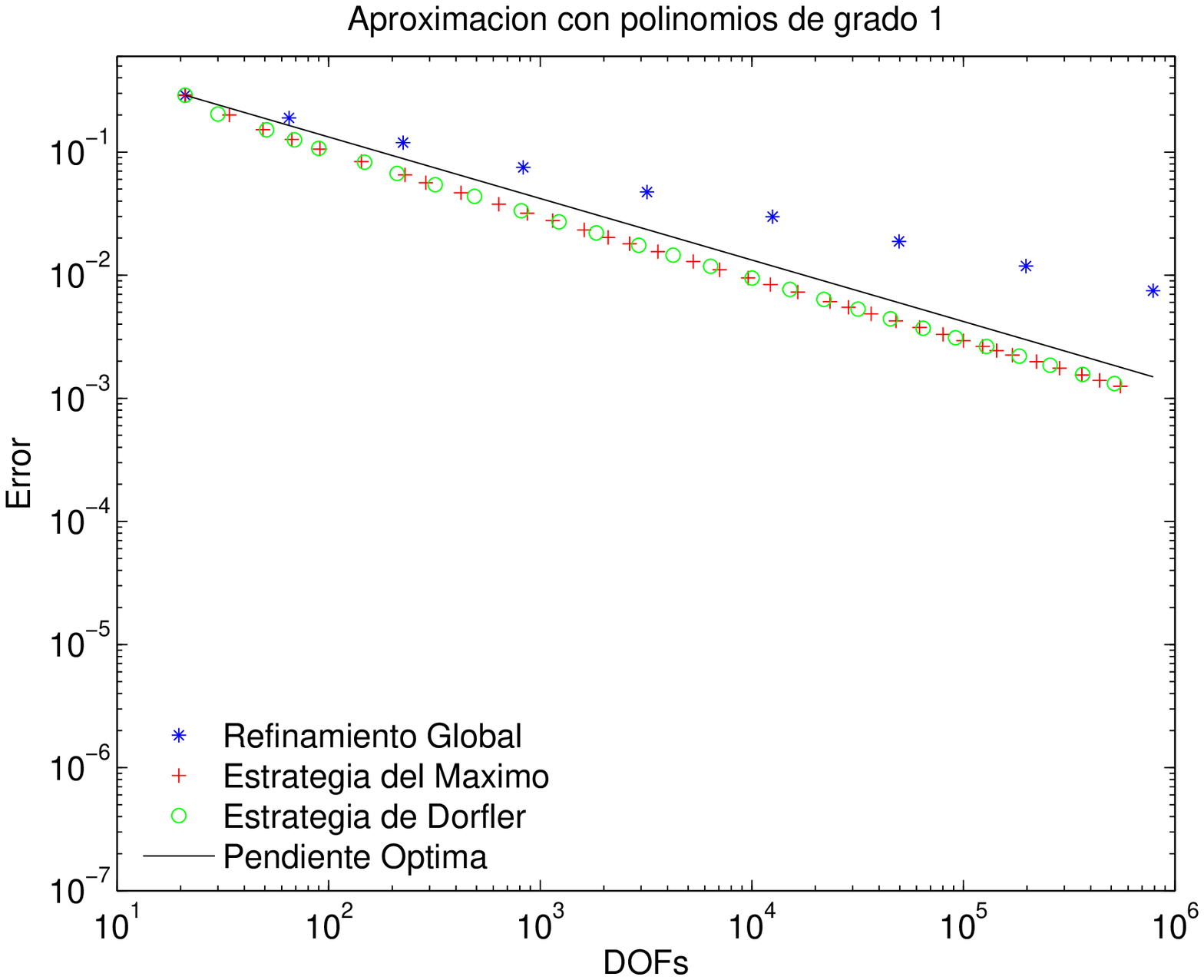}\hfil
\includegraphics[width=.40\textwidth]{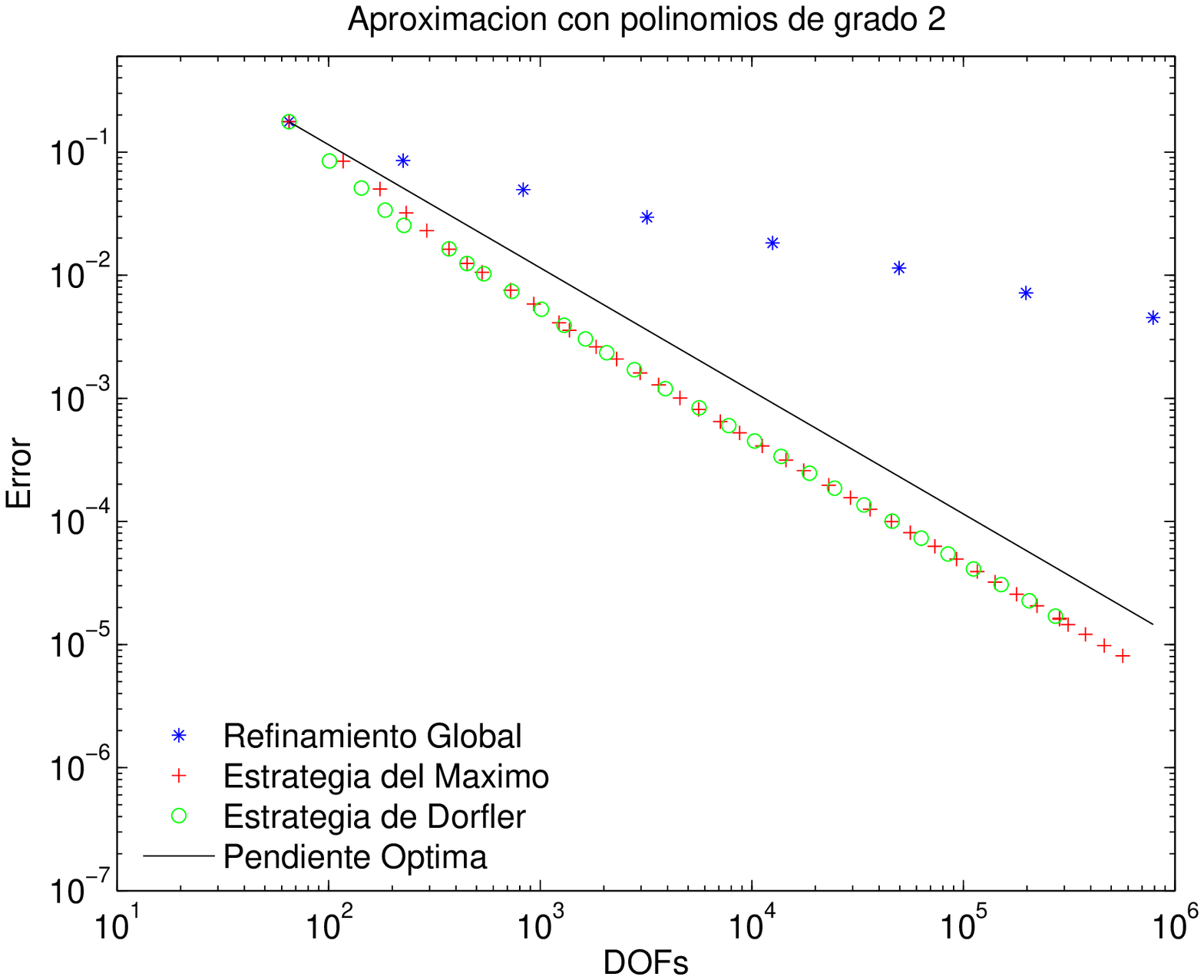}

\bigskip
\includegraphics[width=.40\textwidth]{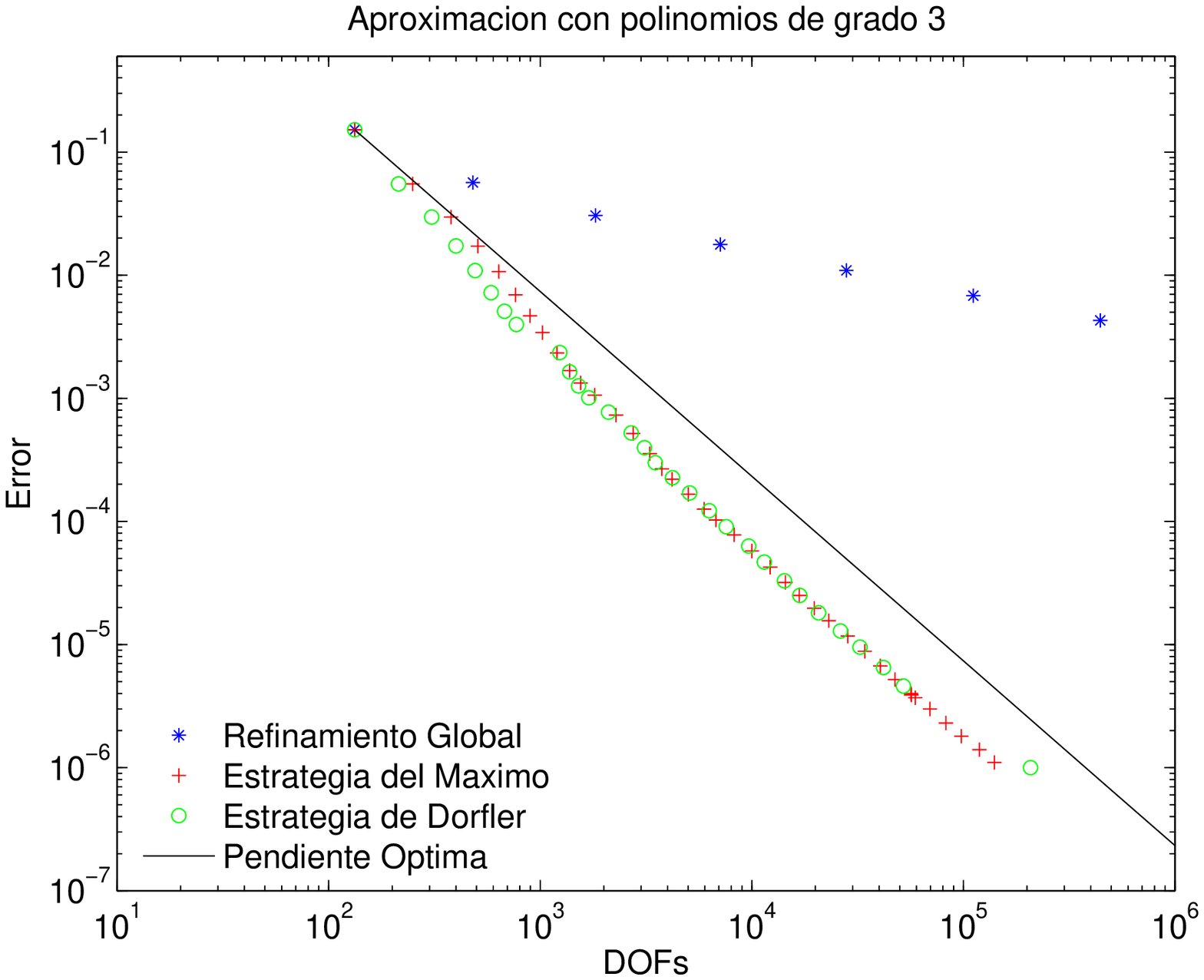}\hfil
\includegraphics[width=.40\textwidth]{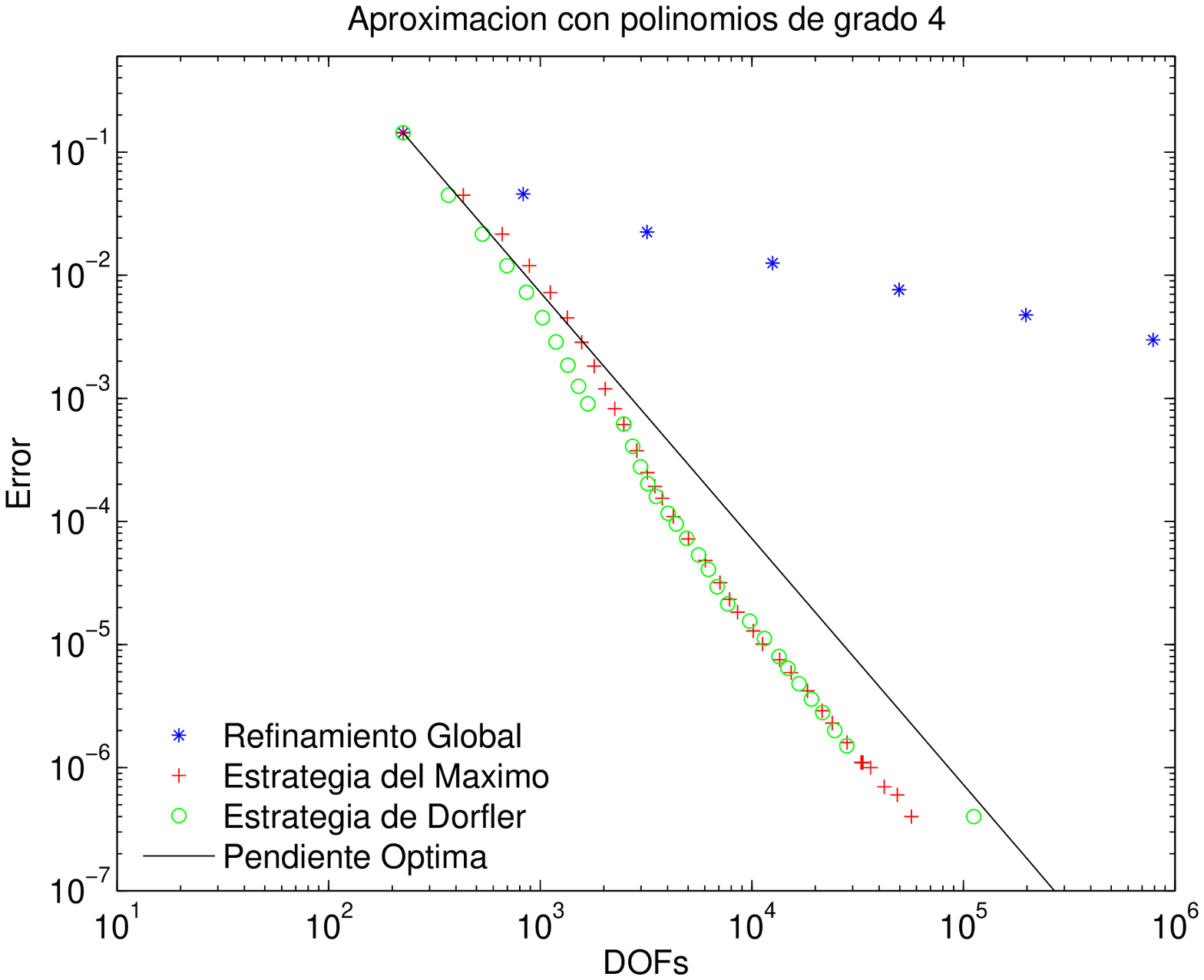}
\caption{
Error versus DOFs for Example~\ref{Ex:creciente}.
%Gr\'aficos del error en norma $H^1(\Omega)$ en funci\'on de los grados de libertad (DOFs) utilizados, para la aproximaci\'on con distintos grados polinomiales, considerando el Ejemplo~\ref{Ex:creciente}.
We present the $H^1(\Omega)$-error between the exact and discrete solutions, versus DOFs. We note that the convergence rate is optimal for the considered adaptive strategies, although the function $\alpha(t)=-\frac12\e^{-\frac{3}{2}t}+1$ is not monotone decreasing. This could mean that this hypothesis is not necessary for the convergence of the Adaptive Algorithm, which performs better than expected by our theory.
}\label{f:creciente_error}
\end{center}
\end{figure}
\end{example}

%\clearpage

% Ejemplo 5
\begin{example}\label{Ex:derivadainfinita}
Finally, we consider an extreme case, with
$$\alpha(t)=2-\frac{\sqrt{t}}{1+\sqrt{t}},\qquad t>0,$$
which satisfies~\eqref{E:Hip beta segunda} and~\eqref{E:Hip alpha decreciente}, but $\lim_{t\to 0^+} \alpha'(t)=-\infty$, as can be observed in Figure~\ref{f:derivadainfinita}. This means that $\alpha$ is not Lipschitz continuous. Since we only require that $|t\alpha'(t)|$ is bounded (cf.~\eqref{E:Hip beta segunda}), it still satisfies the assumptions of the convergence theory, and optimality is observed regardless of the fact that $\lim_{t\to 0^+} \alpha'(t)=-\infty$.

\begin{figure}[h!tbp]
\begin{center}
\psfrag{alpha}{\hspace{-3pt}\tiny $\alpha(t)$}
\psfrag{betasegundabetasegundabetasegunda}{\tiny$\alpha(t^2)+2t^2\alpha'(t^2)$}
\psfrag{t}{\tiny $t$}
\includegraphics[width=.40\textwidth]{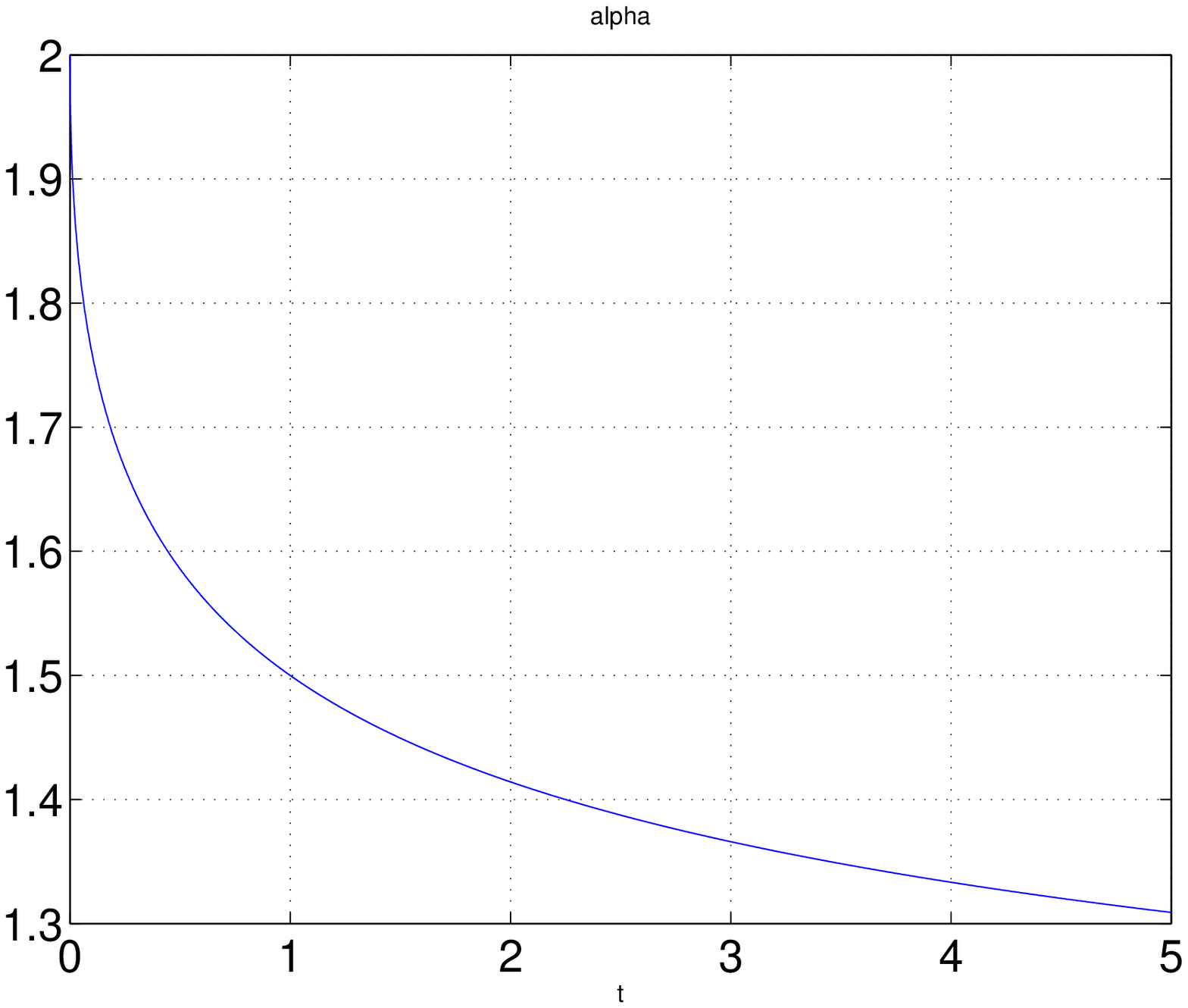}
\hfil
\includegraphics[width=.40\textwidth]{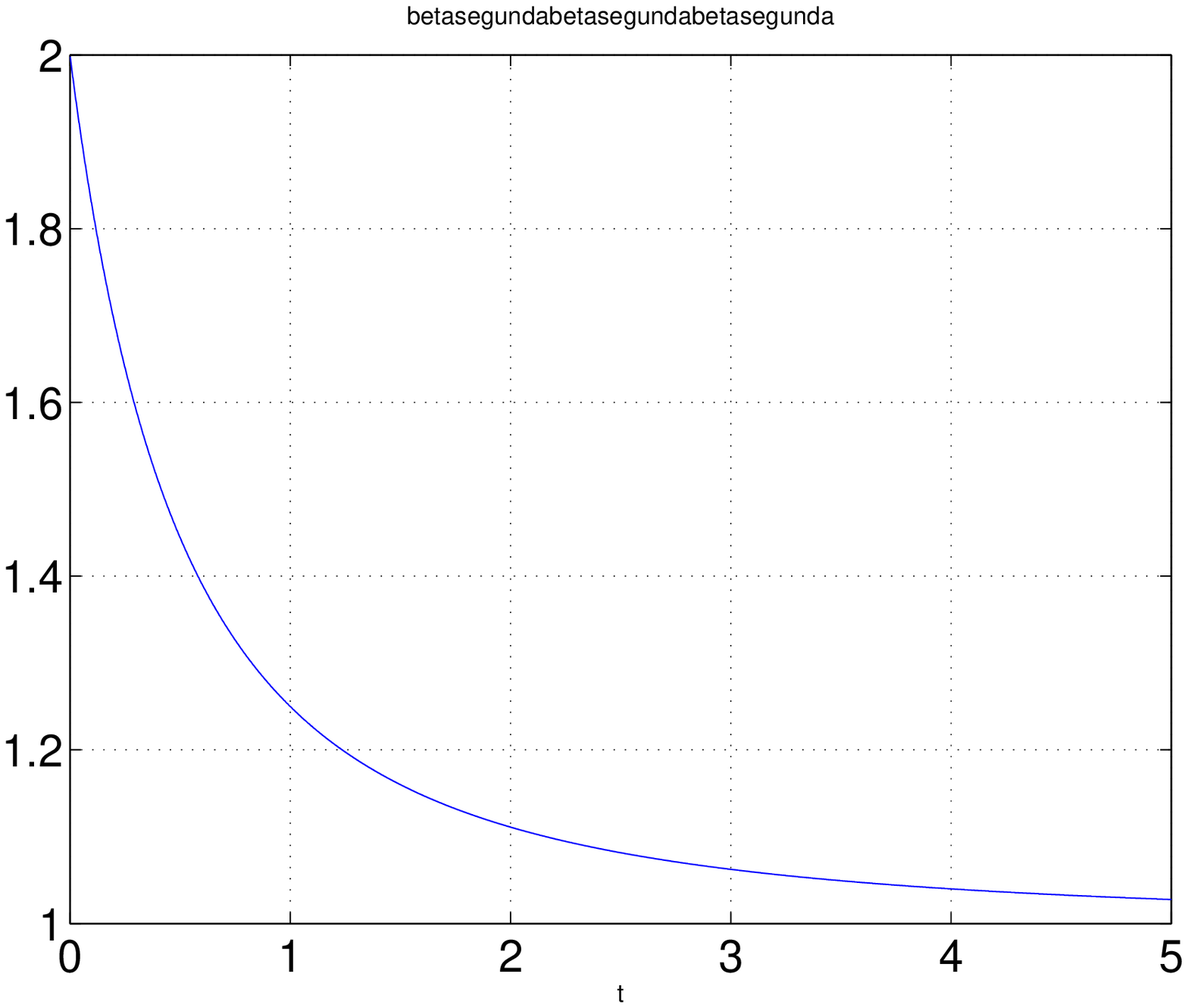}
\caption{The function $\alpha(t)=2-\frac{\sqrt{t}}{1+\sqrt{t}}$ of Example~\ref{Ex:derivadainfinita} satisfies our assumptions which do not require that $\alpha$ is Lipschitz continuous.}\label{f:derivadainfinita}
\end{center}
\end{figure}

Figure~\ref{f:derivadainfinita_error} shows $H^1(\Omega)$-error versus DOFs, for polynomial degrees $\ell = 1, 2, 3, 4$. We obtain optimal convergence rate $\text{DOFs}^{-\ell/2}$ for the adaptive strategies. Thus, even when $\alpha$ is not Lipschitz, the convergence rate is optimal. As a consequence we conclude that it should not be necessary to make additional assumptions in order to prove optimality.

\begin{figure}[h!tbp]
\begin{center}
\psfrag{Refinamiento Global}{\tiny Global refinement}
\psfrag{Estrategia del Maximo}{\tiny Maximum Strategy}
\psfrag{Estrategia de Dorfler}{\tiny D\"orfler Strategy}
\psfrag{Pendiente Optima}{\tiny Optimal slope}
\psfrag{Error}{\hspace{-10pt}\tiny $H^1$-error}
\psfrag{DOFs}{\tiny DOFs}
\psfrag{Aproximacion con polinomios de grado 1}{ \tiny Polynomial degree $\ell=1$}
\psfrag{Aproximacion con polinomios de grado 2}{ \tiny Polynomial degree $\ell=2$}
\psfrag{Aproximacion con polinomios de grado 3}{ \tiny Polynomial degree $\ell=3$}
\psfrag{Aproximacion con polinomios de grado 4}{ \tiny Polynomial degree $\ell=4$}
\includegraphics[width=.40\textwidth]{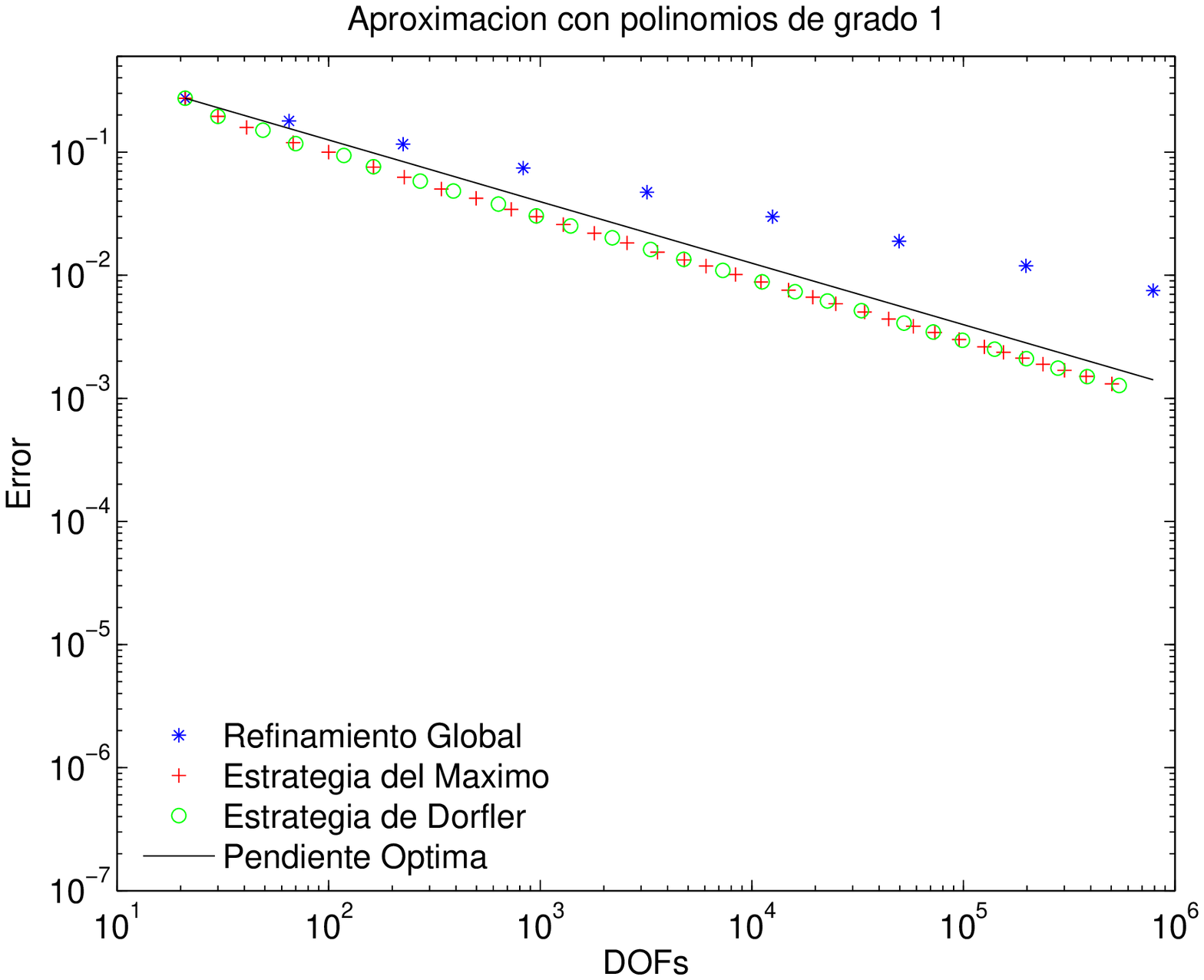}\hfil
\includegraphics[width=.40\textwidth]{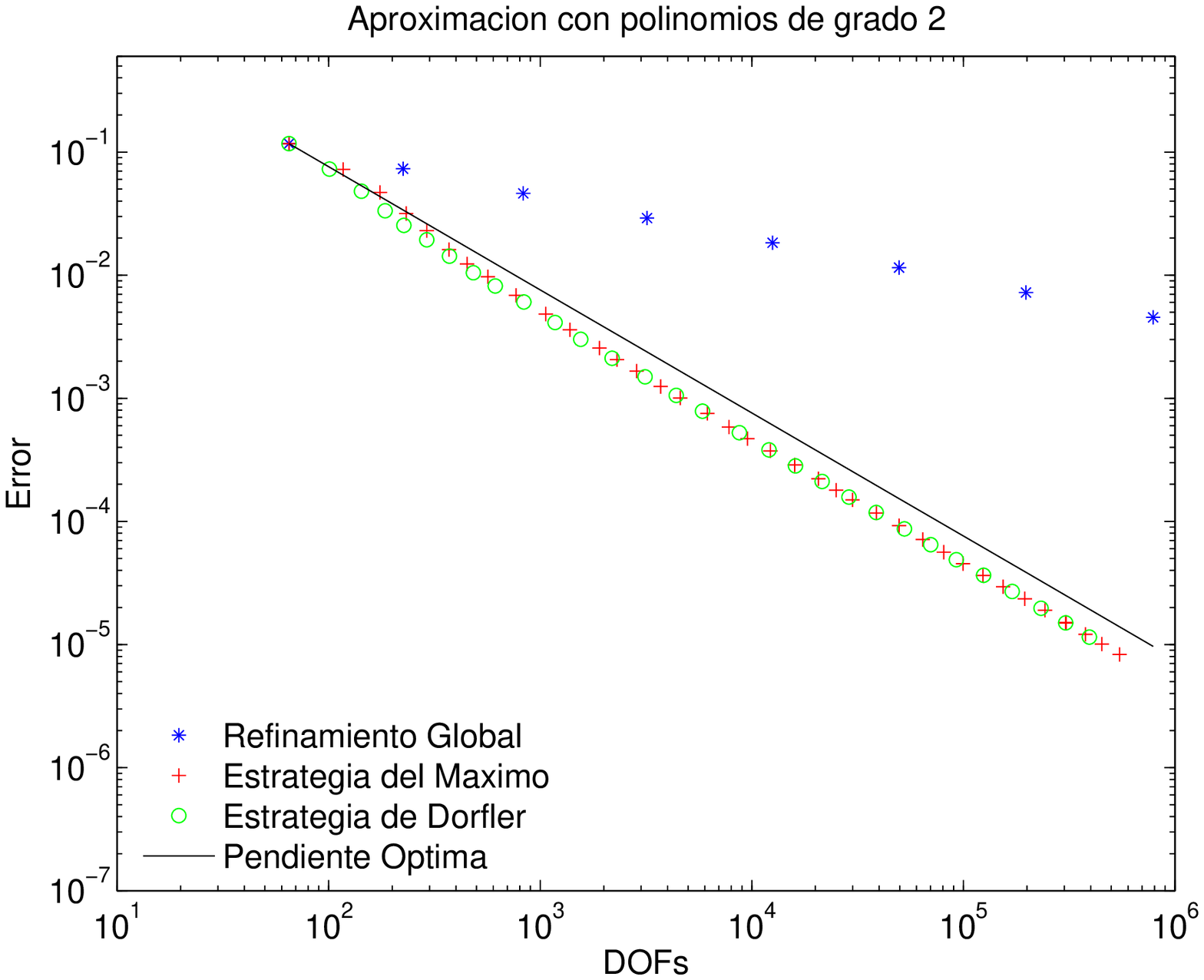}

\bigskip
\includegraphics[width=.40\textwidth]{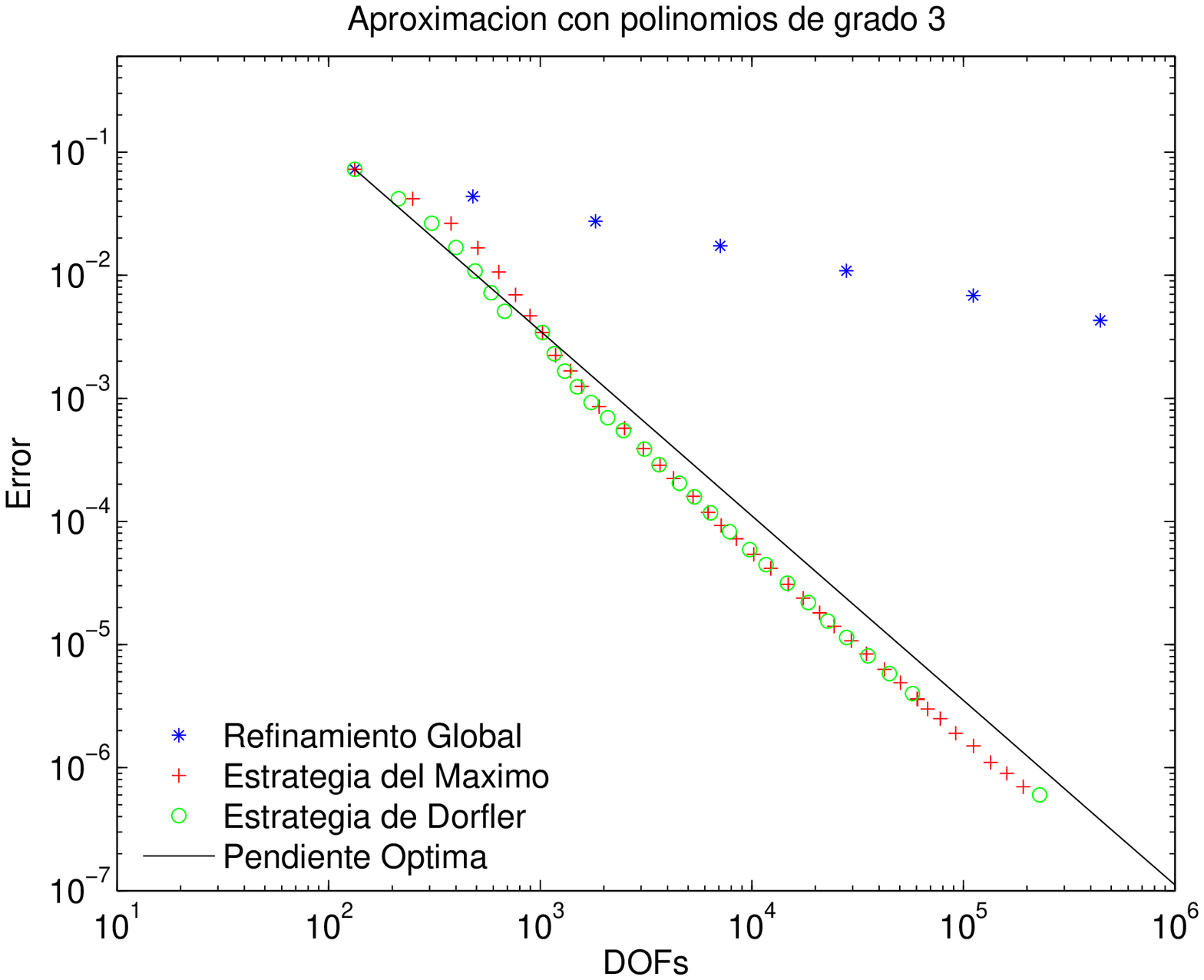}\hfil
\includegraphics[width=.40\textwidth]{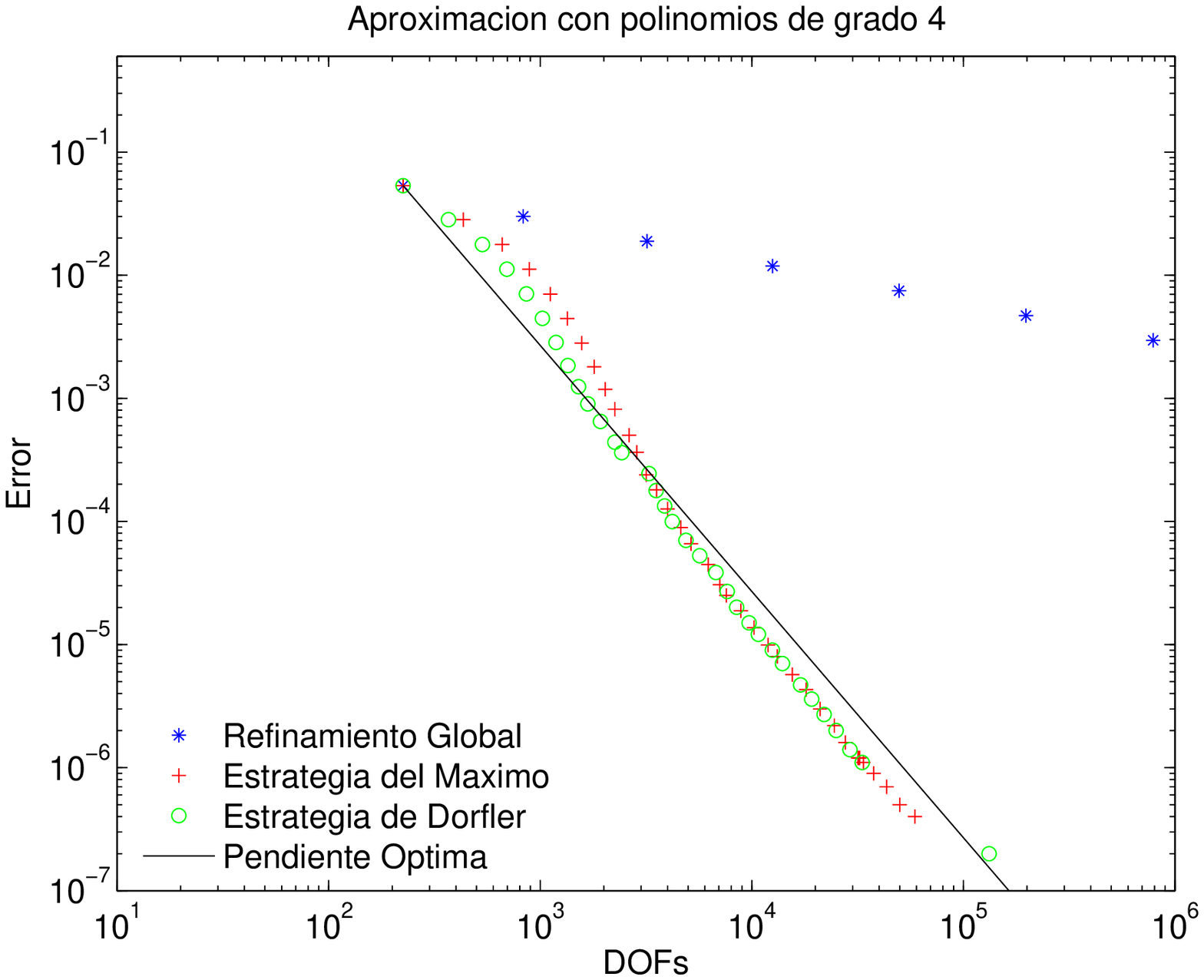}
\caption{
Error versus DOFs for Example~\ref{Ex:derivadainfinita}.
%Gr\'aficos del error en norma $H^1(\Omega)$ en funci\'on de los grados de libertad (DOFs) utilizados, para la aproximaci\'on con distintos grados polinomiales, considerando el Ejemplo~\ref{Ex:derivadainfinita}. 
We present the $H^1(\Omega)$-error between the exact and discrete solutions, versus DOFs. We note that the convergence rate is optimal for the considered adaptive strategies, even though the function $\alpha(t)=2-\frac{\sqrt{t}}{1+\sqrt{t}}$ has an infinite derivative at $0$. This example falls inside the theory, and the fact that $\alpha$ is not Lipschitz continuous does not destroy the optimality of the sequence of discrete solutions.
}\label{f:derivadainfinita_error}
\end{center}
\end{figure}
\end{example}

\subsection{Unknown solution}

In this section we use the Adaptive Algorithm to approximate a solution to a prescribed mean curvature problem. We consider the problem
\begin{equation}\label{E:curvature}
\left\{\begin{aligned}
-\nabla \cdot \left[ \frac{\nabla u}{\sqrt{1+|\nabla u|^2}} \right] &= f \quad & &\text{in }\Omega = (-1,1)\times(-1,1) 
\\
u &= 0 \quad& &\text{on }\partial\Omega,
\end{aligned}\right.
\end{equation}
with 
\[
f(x) = \begin{cases}
5 \quad &\text{if } |x| \le 1/3,
\\
-3 \quad &\text{if } 1/3 < |x| \le 2/3,
\\
0 \quad&\text{otherwise}.
\end{cases}
\]
The function $\alpha(t) = 1/\sqrt{1+t}$ corresponding to this equation does not fulfill~\eqref{E:Hip beta segunda} because $\alpha(t),\alpha'(t) \to 0$ as $t\to\infty$. Nevertheless, for many right-hand side functions $f$, as the one stated above, the solution belongs to $W^{1,\infty}(\Omega)$. This implies that $|\nabla u|$ is bounded and $\alpha$ could be replaced by a function satisfying~\eqref{E:Hip beta segunda} without changing the solution. This is not needed in practice, but is rather a theoretical tool for proving that the assumptions hold in some sense.

 We experimented with several right-hand sides $f$ and observed that the method performs robustly whenever $u \in W^{1,\infty}(\Omega)$. When the solution has an unbounded gradient, the method produces a sequence with a maximum value increasing to infinity. This is a drawback of our method, since it cannot be used to approximate singular solutions (not belonging to $W^{1,\infty}(\Omega)$), if $\alpha$ does not satisfy~\eqref{E:Hip beta segunda}. It is worth recalling that adaptivity is still a good tool for approximating regular solutions, specially taking into account the fact that in this context \emph{regularity} is a concept relative to the polynomial degree. Using adaptivity with higher order finite elements can improve drastically the performance even when the solution is in $H^2(\Omega)\cap W^{1,\infty}(\Omega)$.

We present in Figure~\ref{F:curvature} a picture of the solution obtained with our method and several meshes at different iteration steps, for polynomial degrees 1, 2 and 3. It is worth observing the stronger grading obtained for higher polynomial degrees. This obeys the fact that we mention above, that the singularity of the solution is relative to the polynomial degree of the finite element space. Recall that in practice adaptive methods with polynomials of degree $\ell$ on domains in $\RR^d$ converge with order DOF$^{-\ell/d}$. In order to obtain this rate with uniform meshes, we would need the solution to belong to $H^{\ell+1}(\Omega)$.

\begin{figure}[h!tbp]

\hfil
\includegraphics[width=.6\textwidth]{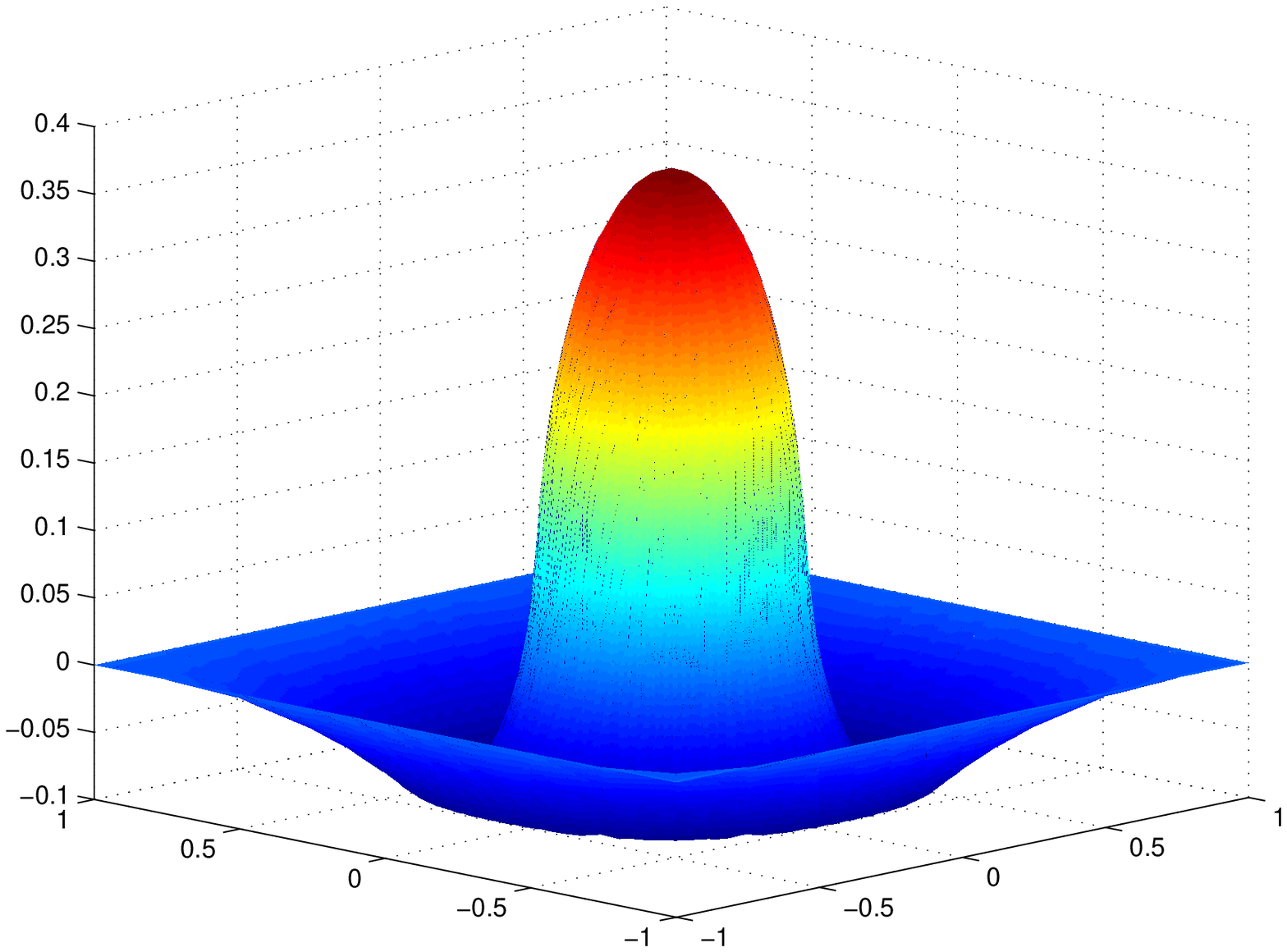}

%\hfil
%\includegraphics[width=.3\textwidth]{figuras/curvature/meshes-degree-1/mesh-00005}
\hfil
\includegraphics[width=.32\textwidth]{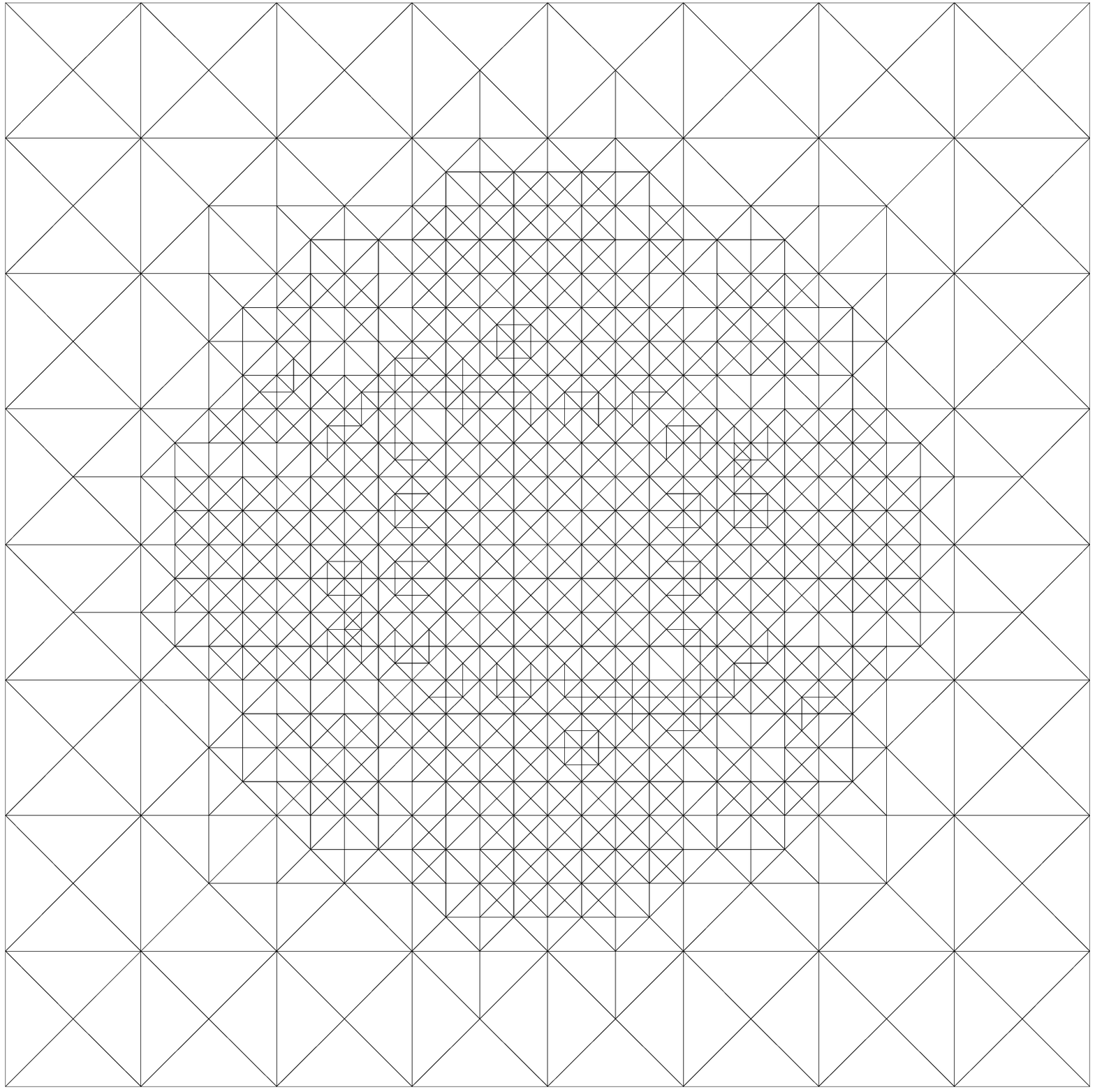}
\hfil
\includegraphics[width=.32\textwidth]{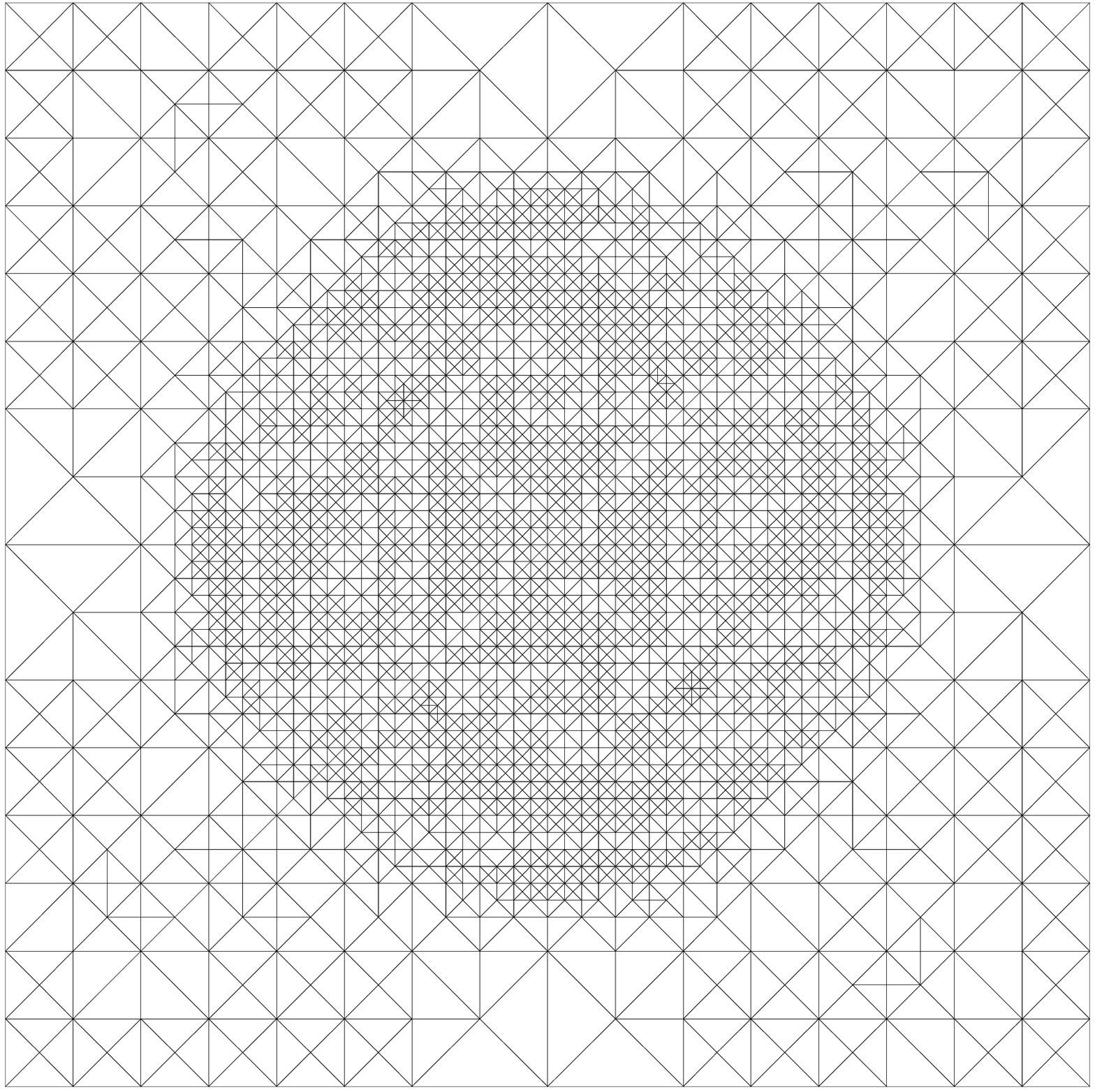}
\hfil
\includegraphics[width=.32\textwidth]{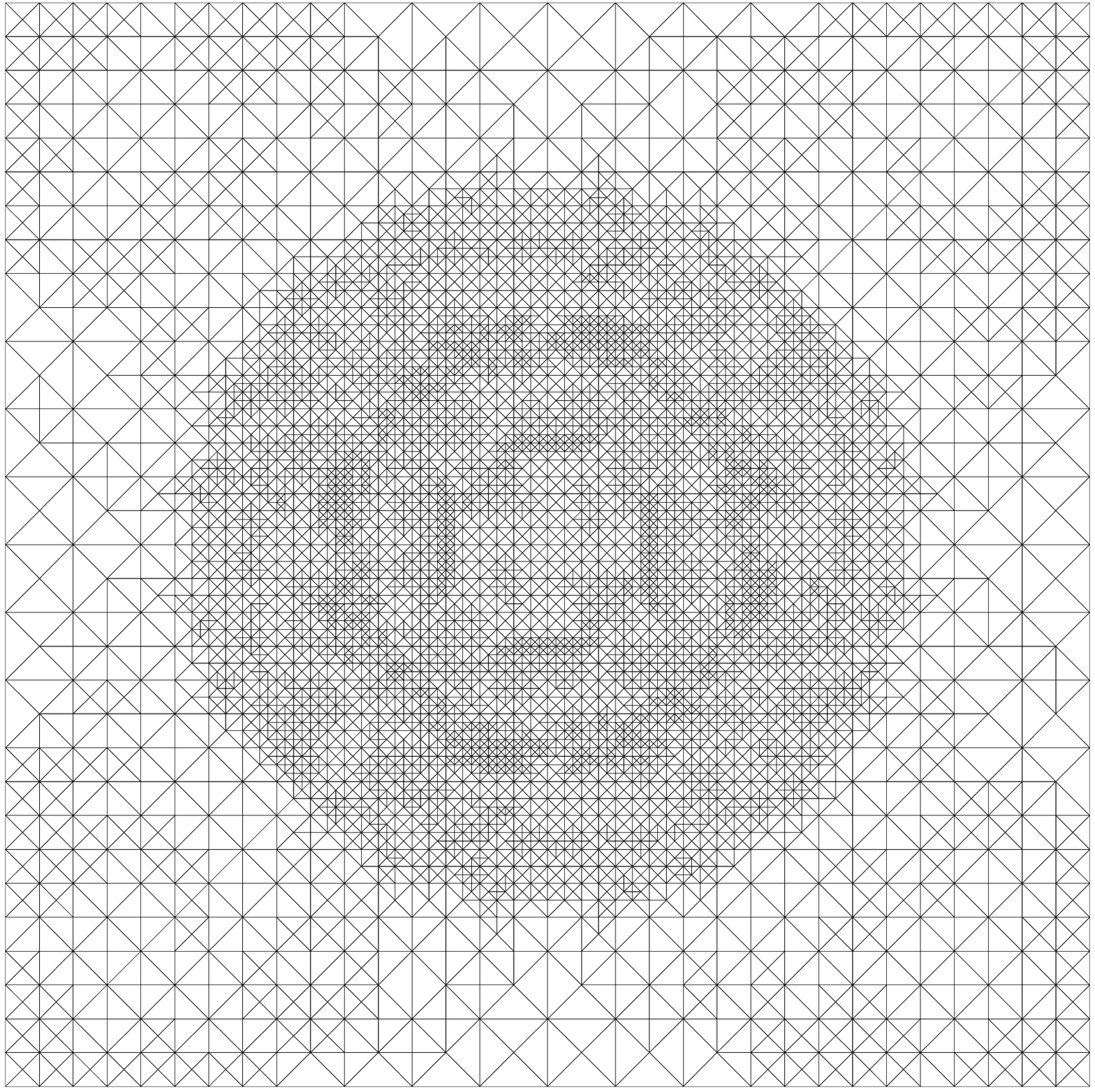}

%\hfil
%\includegraphics[width=.3\textwidth]{figuras/curvature/meshes-degree-2/mesh-00005}
\hfil
\includegraphics[width=.32\textwidth]{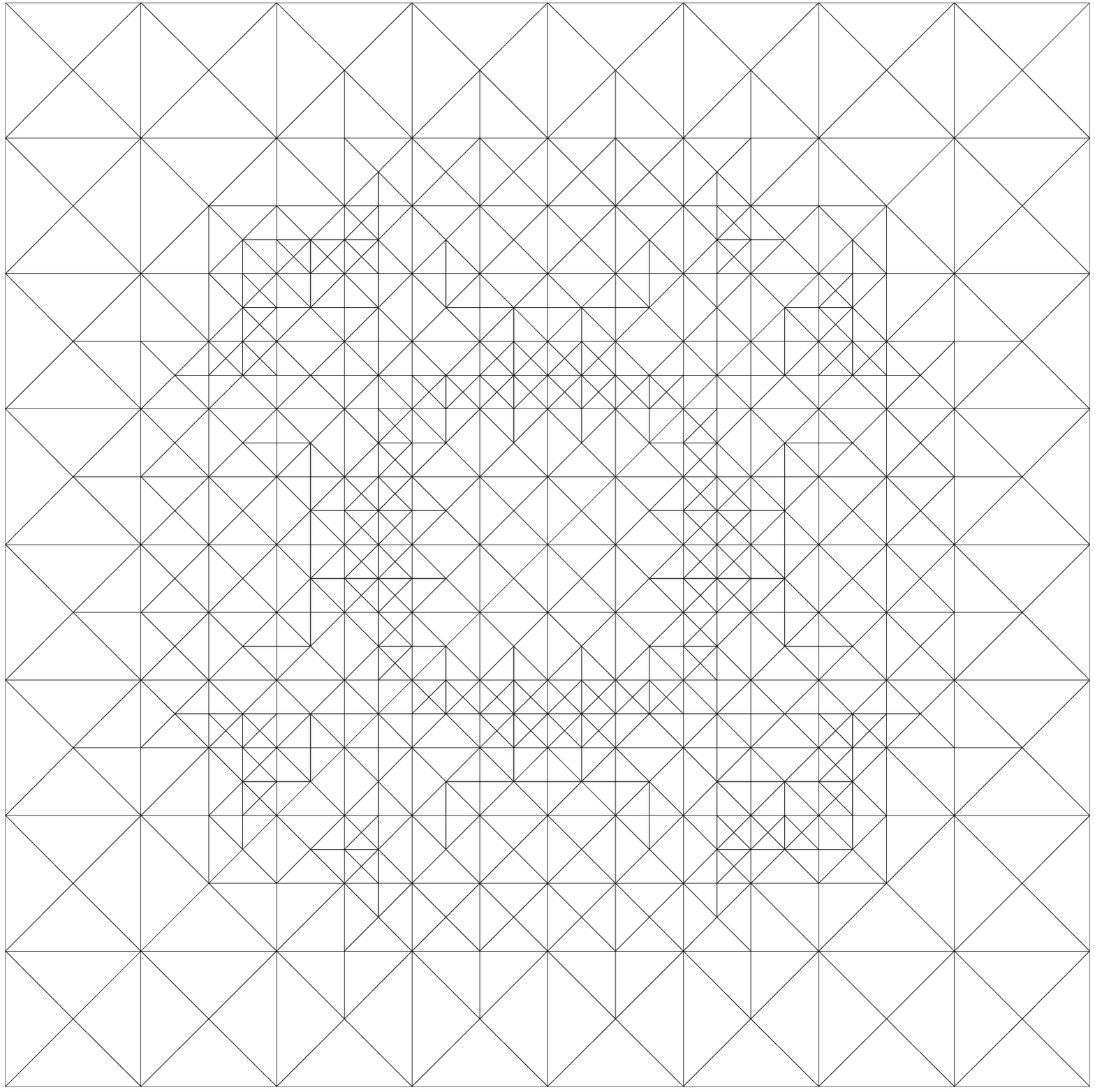}
\hfil
\includegraphics[width=.32\textwidth]{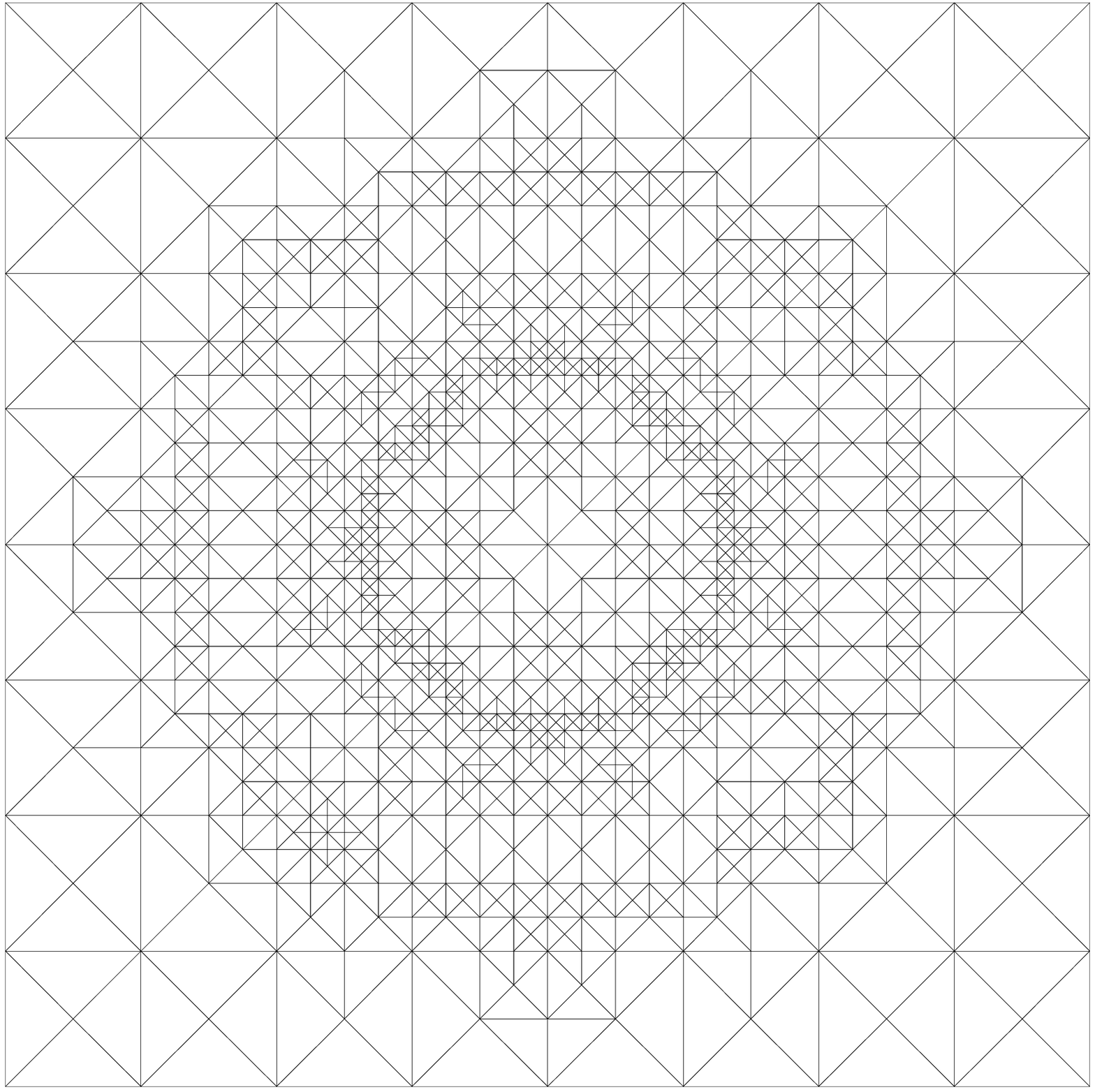}
\hfil
\includegraphics[width=.32\textwidth]{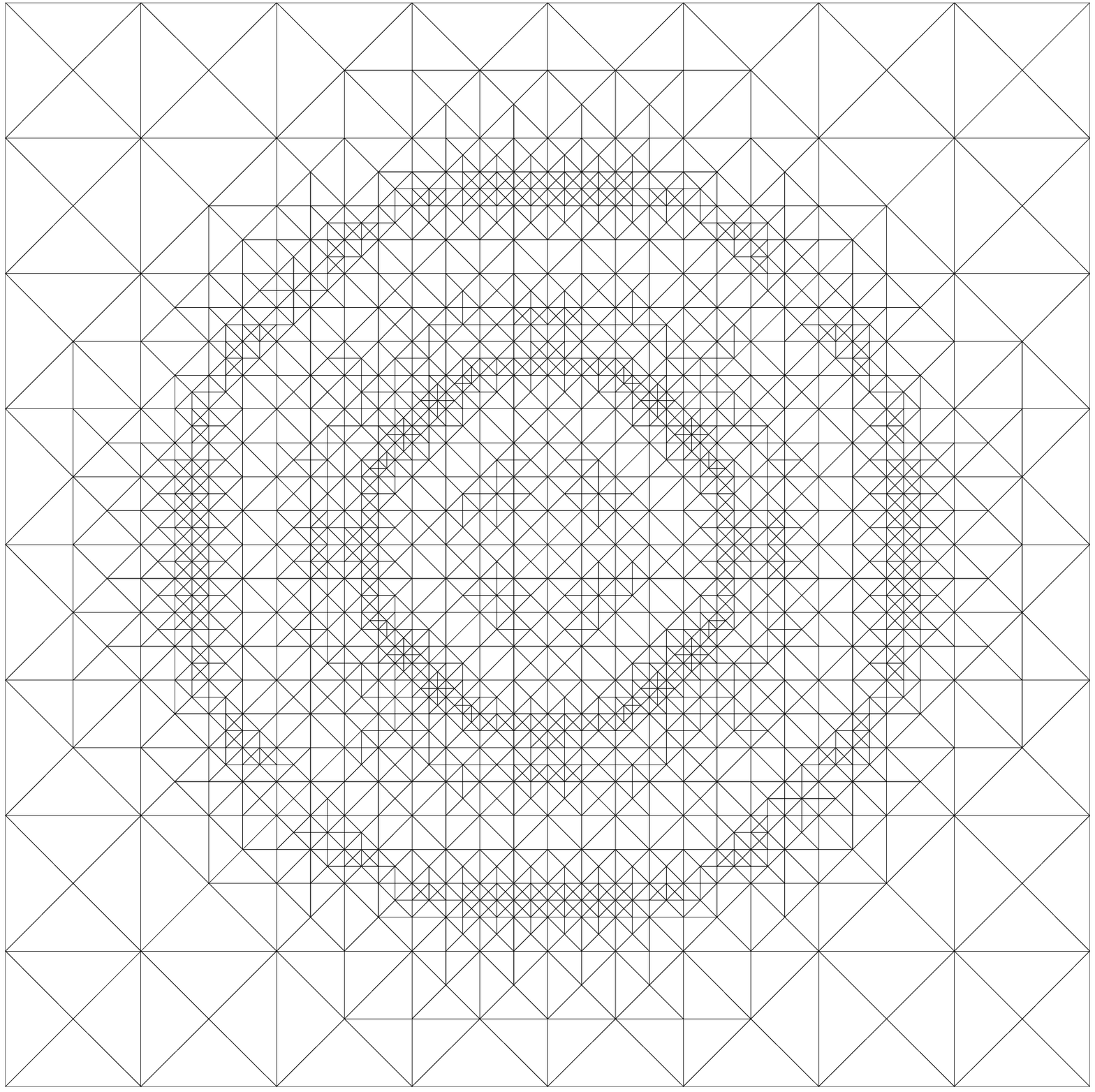}

%\hfil
%\includegraphics[width=.3\textwidth]{figuras/curvature/meshes-degree-3/mesh-00005}
\hfil
\includegraphics[width=.32\textwidth]{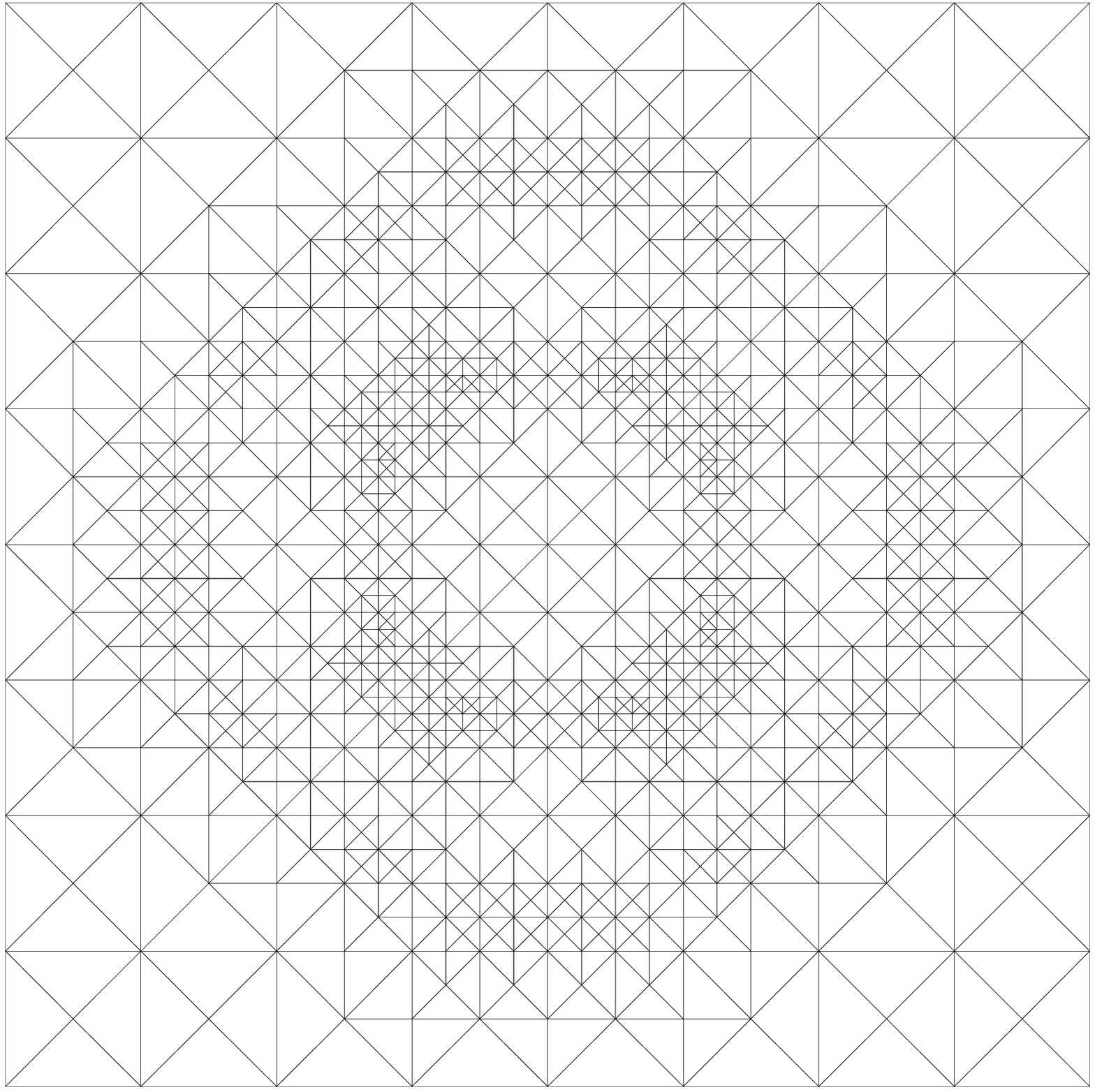}
\hfil
\includegraphics[width=.32\textwidth]{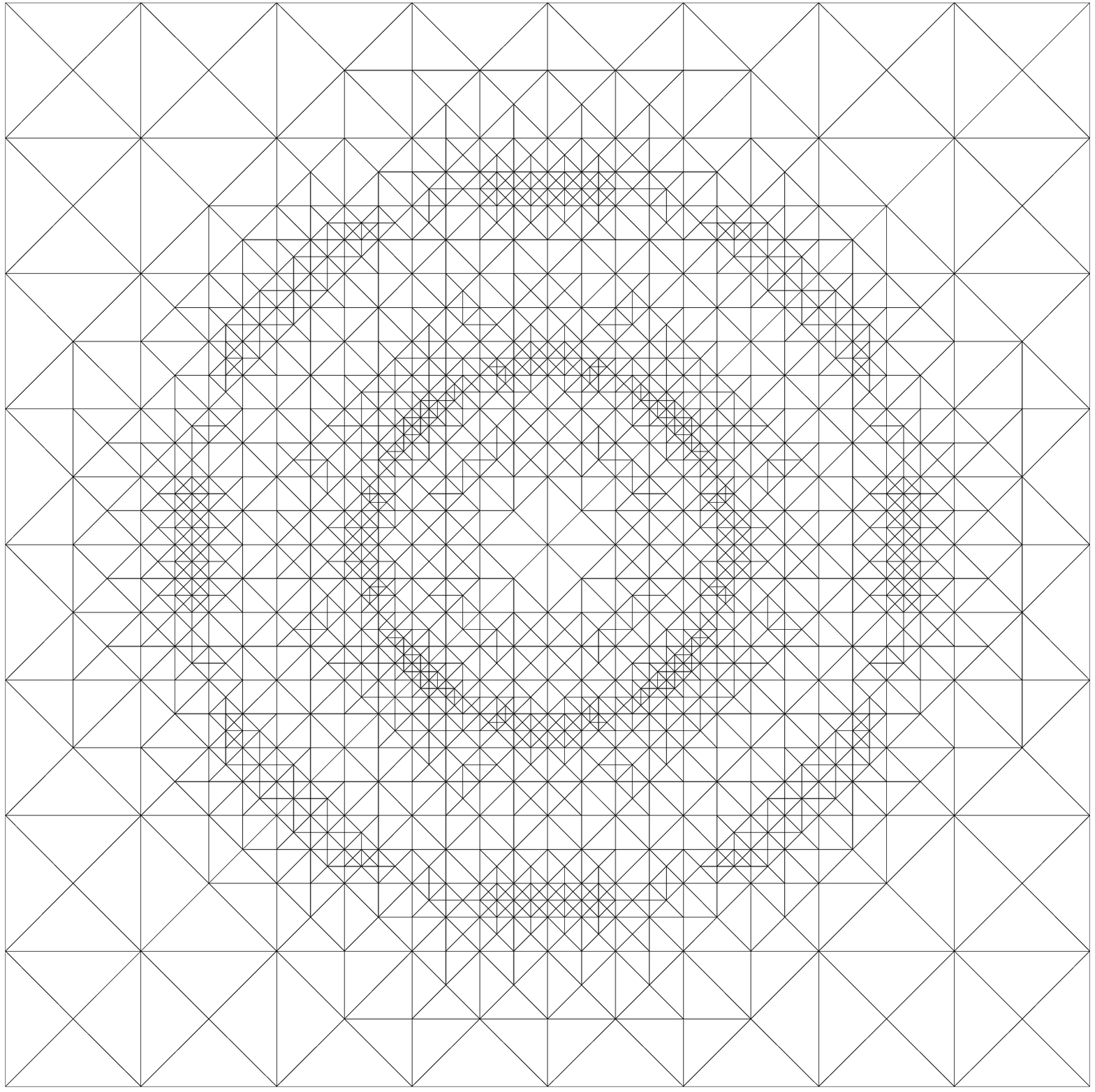}
\hfil
\includegraphics[width=.32\textwidth]{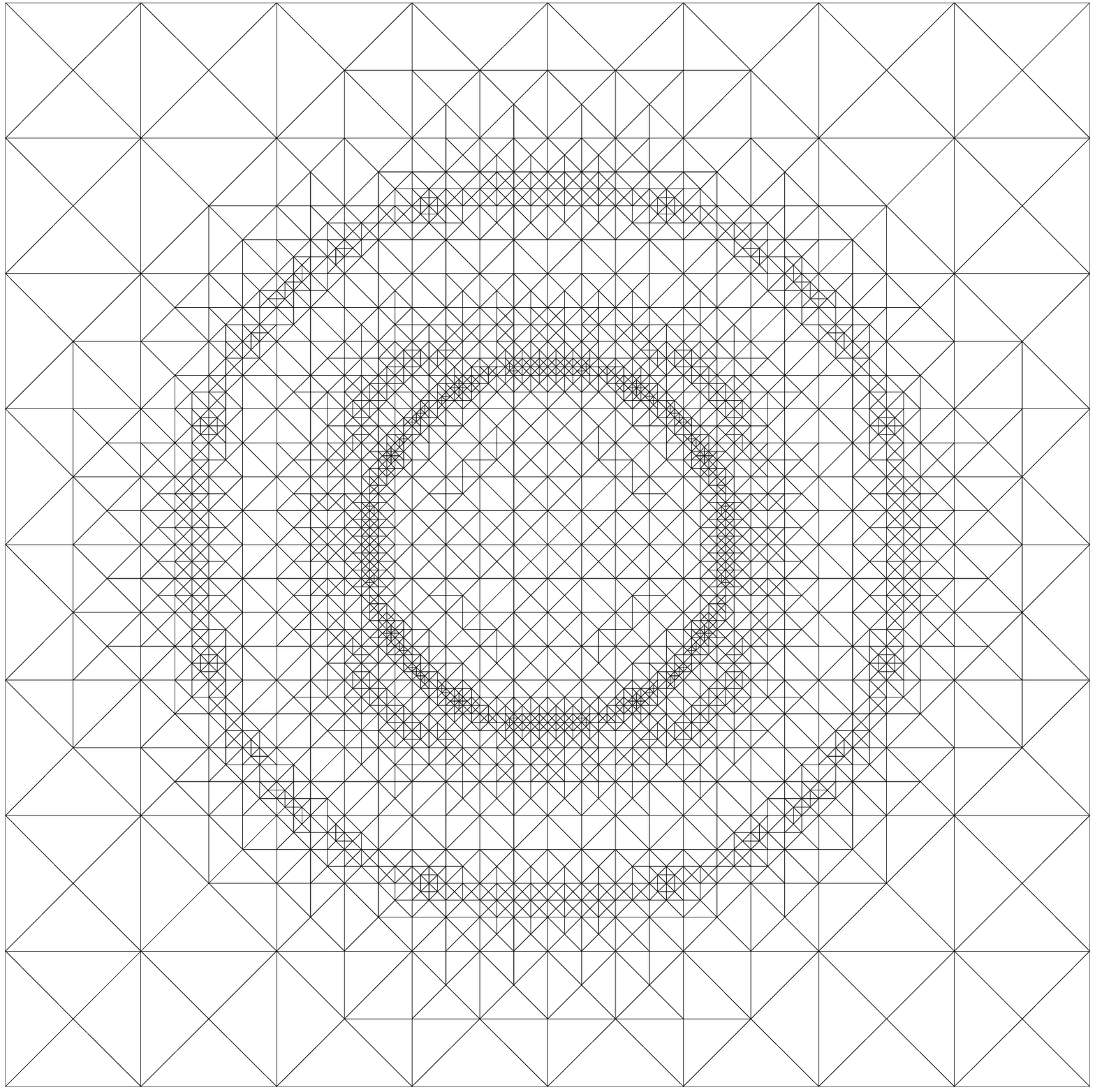}
\caption{\label{F:curvature}Adaptive meshes obtained when solving the prescribed mean curvature equation~\eqref{E:curvature} with polynomials of degree 1 (top), 2 (middle), 3 (bottom). The meshes correspond to iteration count 10 (left), 15 (middle) and 20 (right). It is worth observing the higher grading presented by the meshes corresponding to higher polynomial degree.}
\end{figure}

{\setlength{\parindent}{0pt}
\small

\medskip
\rule{.5\textwidth}{1.5pt}

\medskip
\textbf{Affiliations}
\begin{description}
\item[Eduardo M.\ Garau:] Consejo Nacional de Investigaciones Cient\'{\i}ficas y T\'{e}cnicas and Universidad Nacional del Litoral, Argentina, \url{egarau@santafe-conicet.gov.ar}. 

Partially supported by CONICET (Argentina) through Grant PIP 112-200801-02182, and Universidad Nacional del Litoral through Grant CAI+D PI 062-312.

Address: IMAL, G\"uemes 3450. S3000GLN Santa Fe, Argentina.

\item[Pedro Morin:] Consejo Nacional de Investigaciones Cient\'{\i}ficas y T\'{e}cnicas and Universidad Nacional del Litoral, Argentina, \url{pmorin@santafe-conicet.gov.ar}.

 Partially supported by CONICET (Argentina) through Grant PIP 112-200801-02182, and Universidad Nacional del Litoral through Grant CAI+D PI 062-312.

Address: IMAL, G\"uemes 3450. S3000GLN Santa Fe, Argentina.

\item[Carlos Zuppa:] Universidad Nacional de San Luis, Argentina,
\url{zuppa@unsl.edu.ar}.
 
Partially supported by Universidad Nacional de San Luis through Grant 22/F730-FCFMyN.

Address: Departamento de Matem\'atica, Facultad de Ciencias F\'\i sico Ma\-te\-m\'a\-ticas y Naturales, Universidad Nacional de San Luis, Chacabuco 918, D5700BWT San Luis, Argentina.
\end{description}
}

\end{document}